\NeedsTeXFormat{LaTeX2e}
\documentclass{amsart}
\usepackage{amssymb}
\usepackage{amscd}
\usepackage[latin1]{inputenc}
\usepackage{amsmath}
\usepackage{amsfonts}
\usepackage{latexsym}
\newtheorem{theorem}{Theorem}[section]
\newtheorem{corollary}[theorem]{Corollary}

\newtheorem{definition}[theorem]{Definition}

\newtheorem{proposition}[theorem]{Proposition}
\newtheorem{example}[theorem]{Example}
\theoremstyle{remark}

\newtheoremstyle{nonum}{}{}{\upshape}{}{\itshape}{.}{ }{#1#3}
\theoremstyle{nonum}
\newtheorem{remark*}{Remark}
\newcommand{\dbar}{d\mkern-6mu\mathchar'26}

\newcommand{\comment}[1]{}
\def \beq {\begin{eqnarray}}
\def \eeq {\end{eqnarray}}
\def \beqn {\begin{eqnarray*}}
\def \eeqn {\end{eqnarray*}}

\def\exp{{\mbox{exp}}}
\def\A{{\mathcal A }}

\def\C{{\mathcal C }}

\def\E{{\mathcal E }}

\def\L{{\mathcal L }}
\def\H{{\mathcal H }}

\def\zz{{\mathbb{Z}}}

\begin{document}
\title[Approximating entropy and pressure for
$\zz^2$ MRFs and SFTs]{Approximating entropy for a class of $\zz^2$
Markov Random Fields and pressure for a class of functions on
$\zz^2$ shifts of finite type}

\begin{abstract}
For a class of $\zz^2$ Markov Random Fields (MRFs) $\mu$, we show
that the sequence of successive differences of entropies of induced MRFs on
strips of height $n$ converges exponentially fast (in $n$) to the
entropy of $\mu$.  These strip entropies can be computed explicitly when $\mu$ is a Gibbs state
given by a nearest-neighbor interaction on a strongly irreducible
nearest-neighbor $\zz^2$ shift of finite type $X$.
We state this result in terms of approximations to the (topological) pressures
of certain functions on such an $X$, and we show that these pressures are computable
if the values taken on by the functions are computable. Finally, we show
that our results apply to the hard core model and Ising model for certain
parameter values of the corresponding interactions, as well as to the
topological entropy of certain nearest-neighbor $\zz^2$ shifts of finite type,
generalizing a result in~\cite{Pa}.
\end{abstract}


\date{}
\author{Brian Marcus}
\address{Brian Marcus\\
Department of Mathematics\\
University of British Columbia\\
1984 Mathematics Road\\
Vancouver, BC V6T 1Z2}
\email{marcus@math.ubc.ca}
\author{Ronnie Pavlov}
\address{Ronnie Pavlov\\
Department of Mathematics\\
University of British Columbia\\
1984 Mathematics Road\\
Vancouver, BC V6T 1Z2} \email{rpavlov@du.edu}
\keywords{multidimensional shifts of finite type; Markov random fields; entropy; pressure; disagreement percolation}
\renewcommand{\subjclassname}{MSC 2000}
\subjclass[2000]{Primary: 37D35, 37B50; Secondary: 37B10, 37B40}
\maketitle

\section{Introduction}\label{intro}

The concept of entropy is fundamental to the study of dynamical
systems both in topological dynamics, where it arises as
topological entropy for continuous maps, and in ergodic theory, where
it arises as measure-theoretic entropy for measure-preserving
transformations.

Of particular interest in symbolic dynamics are dynamical systems known
as shifts of finite type. We restrict our attention to
nearest neighbor $\zz^d$ shifts of finite type (n.n. $\zz^d$-SFT); such an SFT $X$
is specified by a finite alphabet $\A$ and a set of translation-invariant adjacency rules:
$X$ is the subset of $\A^{\zz^d}$ of all
configurations on $\zz^d$ which satisfy the adjacency rules.
Here, the underlying dynamics are given by the group of
translations by vectors in $\zz^d$.
The topological entropy $h(X)$ is defined as the
asymptotic growth rate of the number of configurations on finite
rectangles that extend to
elements of $X$ (more precise definitions for n.n. $\zz^d$-SFT, topological entropy
and other concepts used in this introduction
are given in Section \ref{defns}).

%

The most prominent non-trivial example in dimension $d=1$ (i.e. n.n. $\zz$-SFT)
is the golden mean shift, defined as the set of all bi-infinite $0-1$
sequences that do not contain two adjacent $1$'s. Its two-dimensional
analogue, known as the {\em hard square shift} $\mathcal{H}$, is
defined as the set of all $0-1$ configurations on $\zz^2$ such that
$1$'s are never adjacent horizontally or vertically.

The topological entropy of a n.n. $\zz$-SFT $X$ is easy to
compute: namely, $h(X)$ is the log of the largest eigenvalue of a
nonnegative integer matrix defined by the restricted adjacency
rules.  The topological entropy of the golden mean shift turns out
to be the $\log$ of the golden mean (hence the name for this SFT).
However,  it is very difficult in general to compute the
topological entropy of a n.n. $\zz^2$-SFT, and exact
values are known in only a handful of cases. Even for the hard
square shift, the topological entropy is not known.

Given a n.n. $\zz^2$-SFT $X$, the allowed configurations
on a strip of height $n$ form what is effectively a n.n.
$\zz$-SFT $X_n$; here, the alphabet consists of columns of height $n$
that obey the vertical adjacency rules, with two adjacent columns
required to satisfy the horizontal adjacency rules.
In~\cite{Pa}, Pavlov proved
that for the hard square shift $X = \mathcal{H}$, the sequence of
differences $h(X_{n+1}) - h(X_n)$ not only converges to $h(X)$ but does
so exponentially fast (as a function of $n$).  While this does not give an
exact expression for $h(X)$, it
does show that $h(X)$ can be approximated relatively well.  In
particular, a consequence of the approximation result from~\cite{Pa} is that there is a
polynomial time algorithm which
on input $k$ produces an estimate of $h(\mathcal{H})$ guaranteed to be accurate within
$1/k$.  This is in stark contrast to the main result from~\cite{HM}, which implies that
there exist numbers which occur as the topological entropy of a
n.n. $\zz^2$-SFT and are arbitrarily poorly computable.

While topological entropy can be viewed as a purely combinatorial object, the proof in~\cite{Pa}
uses measure-theoretic tools. For a translation-invariant
measure $\mu$ on $\A^{\zz^2}$, there is an analogous notion of measure-theoretic entropy
$h(\mu)$.
If the support of $\mu$ is contained in a n.n.
$\zz^2$-SFT $X$, then $h(\mu) \le h(X)$ and there is always at least one
measure $\mu$ such that $h(\mu) = h(X)$. For $X =
\mathcal{H}$, there is a unique measure of maximal entropy
$\mu_{\rm max}$. There is also a unique measure of maximal entropy
$\mu_n$ for each ${\mathcal H}_n$.  Using results on
``disagreement percolation'' from~\cite{vdBS}, it was shown
that the sequence of differences
$h(\mu_{n+1})- h(\mu_n)$
converges exponentially fast to $h(\mu_{\rm max})$,
and one concludes that $h({\mathcal{H}}_{n+1}) - h(\mathcal{H}_n)$ converges exponentially fast to $h(\mathcal{H})$,
as desired.

%

%
%

Of critical importance to the proof
is the fact that $\mu_{max}$ is a $\zz^2$-Markov random field (MRF), which is,
roughly speaking, a measure on $\A^{\zz^2}$ such that for any finite subset $S \subset \zz^2$
the conditional probability distribution on configurations on $S$ given a configuration $u$ on $\zz^2 \setminus S$
depends only on the restriction of $u$ to the boundary of $S$.
For $\mu_{max}$, these conditional probabilities
are uniform over allowed configurations on $S$ given $u$.

In this paper, we generalize the main result of~\cite{Pa} to more general translation-invariant
$\zz^2$-MRFs $\mu$, with non-uniform
conditional probabilities.
Using results from~\cite{vdBM}, instead of~\cite{vdBS}, we give, in
Section \ref{MRFuniqueness}, a measure-theoretic analogue for
certain $\zz^2$-MRFs.  Namely, our Theorem~\ref{maintheorem2} asserts that for sufficiently large $n$,
the MRF $\mu$ induces MRFs $\mu_n$ on
strips of height $n$, with appropriate boundary conditions, such that
the sequence of differences $h(\mu_{n+1}) - h(\mu_n)$
converges exponentially fast to $h(\mu)$ (here, $\mu_n$ is viewed as a one-dimensional stationary process
on sequences of configurations of $n$-high columns);
this result requires the
existence of suitable boundary rows for sufficiently large $n$ and a
condition on the probability distributions on configurations at a
site in $\zz^2$, conditioned on configurations on its four nearest neighbors.
The condition (which is from \cite{vdBM}) is that any two such (conditional) probability
distributions should not be too different -- more precisely, the
total variation distance between any such distributions should be
less than the critical value for site percolation in $\zz^2$.
This condition is similar in spirit to the classical Dobrushin uniqueness criterion.
%
The induced MRF $\mu_n$ is defined by restricting
the conditional probability specifications of $\mu$ to the
strip of height $n$, with appropriate boundary conditions imposed on the row
immediately above the top
row of the strip and the row immediately below the
bottom row of the strip. 
We note, in particular, that $\mu_n$ is {\em not} the usual marginalization of $\mu$ to the strip;
this latter process is typically not even an MRF.

%

This all becomes more concrete when the MRF $\mu$ is a Gibbs state for a n.n.
interaction $\Phi$ on a n.n. $\zz^2$-SFT $X$ which satisfies a strong irreducibility condition
(Gibbs states are discussed in Section~\ref{gibbs} and the strong irreducibility condition
and consequences are discussed in Section~\ref{SI}).
In this case, the induced MRFs $\mu_n$ are translation-invariant first-order Markov chains
whose transition probabilities are easily computed from the interaction (Proposition~\ref{Markov} in Section~\ref{markov}).
There is a simple closed form for the entropy of such a Markov chain, which
in spirit is similar to the closed form for topological entropy of a n.n. $\zz$-SFT.

The interaction $\Phi$ defines a continuous function $f_\Phi$ on $X$. The
pressure $P_X(f_\Phi)$ of such a  function is defined as the asymptotic growth rate of arrays which are,
roughly speaking, weighted by $\exp(f_\Phi)$.  Using an equivalent variational formula for
$P_X(f_\Phi)$, given in terms of translation-invariant measures supported within $X$, one can apply
%
Theorem~\ref{maintheorem2} to obtain
exponentially fast approximations to pressures of such functions $f_\Phi$
by differences of pressures of induced functions on strips of height $n$ (Theorem~\ref{main_Gibbs2});
these pressures are computed as largest eigenvalues of explicit matrices.
A corollary of this result is
Theorem~\ref{pressurecomputability} which expresses the
computability of $P_X(f_\Phi)$ in terms of computability of the values of $\Phi$.

Finally, examples of Gibbs states and corresponding pressures are given in Section~\ref{examples}.
We consider two classical examples: the two-dimensional hard core model,
given by an interaction parameterized by activity level $a$, and the two-dimensional Ising
antiferromagnet model, parameterized by
inverse temperature $\beta$ and external field $h$.  Explicit ranges of values of
these parameters are given for which Theorem~\ref{main_Gibbs2} applies.

When $\Phi = 0$, then $f_\Phi = 0$ and $P_X(f_\Phi)$ reduces to $h(X)$.  It follows that the approximation
result for pressure (Theorem~\ref{main_Gibbs2})
can be used to obtain exponentially fast approximations to $h(X)$ for certain n.n. $\zz^2$-SFTs. In particular,
this result recovers the main result of~\cite{Pa} and extends that result to other
n.n. $\zz^2$-SFTs, examples of which are given in Section~\ref{examples}.

%
%

\section{Definitions and preliminaries}
\label{defns}

An {\em undirected graph} $G$ consists of a set of {\em vertices}
(or {\em sites}) $V(G)$ and a set of {\em edges} (or {\em nearest
neighbors}) $E(G)$ of (unordered) pairs of distinct vertices.
All graphs we consider will be countable and locally finite.
Two vertices $v,w \in V(G)$ are said to be {\em adjacent} if $\{v,w\} \in E(G)$.
For finite sets $U_1, U_2 \subset V(G)$, let $E(U_1,U_2)$ denote the
set of all edges in $G$ with one vertex in $U_1$ and the other in
$U_2$.

For any $d > 0$, we (in a slight abuse of notation) use $\mathbb{Z}^d$ to denote the \textit{$d$-dimensional cubic lattice}, the graph defined by $V(\mathbb{Z}^d) = \zz^d$ and $E(\mathbb{Z}^d) = \{\{u,v\} \ : \ \sum_{i=1}^d |u_i - v_i| = 1\}$.

The \textit{boundary} of a set $S \subset V(G)$ within a graph $G$, which is denoted by $\partial(S,G)$, is the set of $v \in V(G) \setminus S$ which are adjacent to some element of $S$. If we refer to simply the boundary of a set $S$, or write $\partial S$, then the graph $G$ is assumed to be $\mathbb{Z}^2$. In the case where $S$ is a singleton $\{v\}$, we call the boundary the {\em set of neighbors} $N^G_v$, which is just the set of $w \in V(G)$ adjacent to $v$. Again, when no mention of $G$ is made, it is assumed to be $\zz^2$.

For any integers $a < b$, we use $[a,b]$ to denote $\{a,a+1,\ldots,b\}$.

\

An \textit{alphabet} $\A$ is a finite set with at least two elements.

A \textit{configuration} $u$ on the alphabet $\A$ in the graph $G$ is any mapping from a non-empty subset $S$ of $V(G)$ to $\A$, where $S$ is called the \textit{shape} of $u$. For any configuration $u$ with shape $S$ and any $T \subseteq S$, denote by $u|_T$ the restriction of $u$ to $T$, i.e. the subconfiguration of $u$ occupying $T$. For $S,T$ disjoint sets, $x \in \A^S$ and $y \in \A^T$, $xy$ denotes the configuration on $S \cup T$ defined by $(xy)|_S = x$ and $(xy)|_T = y$, which we call the \textit{concatenation} of $x$ and $y$.

For any $d$, we use $\sigma$ to denote the natural {\em shift
action} on $\A^{\mathbb{Z}^d}$ defined by $(\sigma_{v}(x))(u) =
x(u+v)$.

For any alphabet $\A$ and graph $G$, $\A^{V(G)}$ is a topological space when endowed
with the product topology (where $\A$ has the discrete topology), and any subsets will
inherit the induced topology. We will also frequently speak of measures on $\A^{V(G)}$,
and all such measures in this paper will be Borel probability measures. This means that any $\mu$ is determined by its values on the sets $[w] := \{x \in \A^{V(G)} \ : \ x|_S = w\}$, where $w$ is a configuration with arbitrary finite shape $S \subseteq V(G)$. Such sets are called {\em cylinder sets}, and for notational convenience, rather than referrring to a cylinder set $[w]$ within a measure or conditional measure, we just use the configuration $w$. For instance, $\mu(w \cap v \ | \ u)$ represents the conditional measure $\mu([w] \cap [v] \ | \ [u])$.

A measure $\mu$  on $\A^{\zz^d}$ is {\em translation-invariant} (or
{\em stationary})  if $\mu(A) = \mu(\sigma_{v} A)$ for all
measurable sets $A$ and $v \in \zz^d$. A translation-invariant
measure $\mu$ on $\A^{\mathbb{Z}^d}$ is {\em ergodic} if whenever $U
\subseteq \A^{\mathbb{Z}^d} $ is measurable and translation-invariant, then
$\mu(U) = $ 0 or 1.

\

Let $d$ be a positive integer. Let $\E_1, \ldots, \E_d \subseteq
\A^2$. The {\em nearest neighbor $\zz^d$ shift of finite type (n.n. $\zz^d$-SFT)}
$X$, defined by $\E_1, \ldots, \E_d$ is the set $X$ of all $x \in
\A^{\zz^d}$ such that whenever $u\in \zz^d$ and $1 \le i \le d$,
~we have $x(u) x(u + e_i) \in \E_i$, where $e_i$ is the $i$th standard
basis vector. We say that $X$ is a n.n. SFT if it is a n.n. $\zz^d$-SFT for some $d$.

When $d=1$, we write $\E = \E_1$. Any n.n. $\zz$-SFT $X$ defined by $\E$ has an associated square $|\A| \times |\A|$ matrix $A$, called the \textit{adjacency matrix}, which is defined by $a_{i,j} = \chi_{\E}(i,j)$. In other words, $a_{i,j} = 1$ iff $(i,j) \in \E$.

The {\em language} of $X$ is:
$$
\L(X) = \cup_{\{S \subset \zz^d, ~ |S| < \infty\}} \L_S(X)
$$
where
$$
\L_S(X) = \{x|_S: x \in X\}.
$$

For a subset $S \subseteq \zz^d = V(\mathbb{Z}^d)$, finite or infinite, a configuration
$x \in \A^S$ is {\em globally admissible} for $X$ if $x$ extends to a configuration on all of
$\zz^d$. So, the language $\L(X)$ is precisely the set of globally
admissible configurations on finite sets.

A configuration $x \in \A^S$ is {\em locally admissible} for $X$ if for all edges $e = \{u, u
+ e_i\}$ contained in $S$, we have $x|_e \in \E_i$. We note that technically there is an ambiguity
here since several choices for $\mathcal{E}_i$
could induce the same n.n. SFT. For this reason, we will always think
of a n.n. SFT as being ``equipped'' with a specific choice of the sets $\mathcal{E}_i$.

\begin{example}
The $\mathbb{Z}^2$ {\rm hard square shift} $\mathcal{H}$ is the n.n. shift of finite type with alphabet $\{0,1\}$ and $\E_1 = \E_2 = \{(0,0), (0,1), (1,0)\}$. 
\end{example}

Given any measure $\mu$ on $\A^{\zz^d}$ and any rectangular prism $R = \prod {[1,n_i]}$, we can associate a \textit{$R$-higher power code} $\mu^{[R]}$ of $\mu$, defined as the image of $\mu$ under the mapping $\phi_R: \A^{\zz^d} \rightarrow (\A^R)^{\zz^d}$ defined by $(\phi_R x)(v_1, \ldots, v_d) = x|_{\prod [n_i v_i, n_i (v_i + 1) - 1]}$. For any n.n. $\zz^d$-SFT $X$, $\phi_R(X)$ is also a n.n. $\zz^d$-SFT, which we call the \textit{$R$-higher power code} of $X$ and denote by $X^{[R]}$.

We define a square nonnegative matrix $A$ to be \textit{primitive} if some power $A^n$ has all positive entries. This allows us to define the notion of mixing for two types
of one-dimensional dynamical systems. A n.n. $\zz$-SFT $X$ is called \textit{mixing} iff its adjacency matrix (after discarding any letters of $\A$ which do not actually appear in $X$) is primitive, and a Markov chain is called \textit{mixing} if its transition probability matrix is primitive.

For any translation-invariant measure $\mu$ on $\A^{\mathbb{Z}^d}$, we may define its entropy as
follows.

\begin{definition}
The {\rm measure-theoretic entropy} of a translation-invariant
measure $\mu$ on $\A^{\mathbb{Z}^d}$ is
defined by
\[
h(\mu)=\lim_{j_1, j_2, \ldots, j_d \rightarrow \infty} \frac{-1}{j_1 j_2 \cdots j_d} \sum_{w \in \A^{\prod_{i=1}^d [1,j_i]}} \mu(w) \log(\mu(w)),
\]

\noindent
where terms with $\mu(w) = 0$ are omitted.
\end{definition}

We will also deal with measure-theoretic conditional entropy in this paper. It can be defined more generally, but for our purposes, we will define it only for a measure on $\A^{\zz}$ and specific type of partition of $\A^{\zz}$. For any partition $\xi$ of a set $S$, and for any $s \in S$, we use $\xi(s)$ to denote the element of $\xi$ which $s$ is in. If $\xi$ is a partition of an alphabet $\A$, then $\phi_{\xi}$ is the map on $\A^{\zz}$ defined by $\phi_{\xi}(x) = \ldots \xi(x_{-1}) \xi(x_0) \xi(x_1) \ldots$.

We note that for any measure $\mu$ on $\A^{\zz}$ and any partition $\xi$ of $\A$, the push-forward $\phi_{\xi}(\mu)$ of $\mu$ under the map $\phi_{\xi}$ is a measure on $\xi^{\zz}$.

\begin{definition}
For any translation-invariant measure $\mu$ on $\A^{\zz}$ and any partition $\xi$ of $\A$, the {\rm conditional measure-theoretic entropy} of $\mu$ with respect to $\xi$ is
\[
h(\mu \ | \ \xi) = \lim_{k \rightarrow \infty} \frac{-1}{2k+1} \sum_{w \in \A^{[-k,k]}} \mu(w) \log\bigg(\frac{\mu(w)}{(\phi_{\xi}(\mu))\big(\xi(w_{-k}) \ldots \xi(w_k)\big)}\bigg),
\]
where again terms with $\mu(w) = 0$ are omitted.
\end{definition}

Measure-theoretic conditional entropy is most useful because of the following decomposition formula. For a proof, see \cite{Pa}.

\begin{proposition}\label{P2}
For any translation-invariant measure $\mu$ on $\A^\zz$, and any partition $\xi$ of $\A$,
\[
h(\mu \ | \ \xi) = h(\mu) - h(\phi_{\xi}(\mu)).
\]
\end{proposition}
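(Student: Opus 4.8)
The plan is to unwind the two definitions, apply the chain rule for finite entropies uniformly in the block length, and then pass to the limit; this is the standard ``entropy chain rule'' specialized to the push-forward under a coordinatewise factor map. Fix $k$, and for $w \in \A^{[-k,k]}$ write $\overline{w} := \xi(w_{-k}) \cdots \xi(w_k) \in \xi^{[-k,k]}$ for the result of applying $\xi$ coordinatewise (so $\overline{w}$ is the $[-k,k]$-block of $\phi_\xi(x)$ for any $x$ with $x|_{[-k,k]} = w$). Splitting the logarithm as $\log\big(\mu(w)/(\phi_\xi(\mu))(\overline{w})\big) = \log \mu(w) - \log (\phi_\xi(\mu))(\overline{w})$, the $(2k+1)$-term average defining $h(\mu \ | \ \xi)$ becomes
\[
\frac{-1}{2k+1} \sum_{w} \mu(w) \log \mu(w) \;+\; \frac{1}{2k+1} \sum_{w} \mu(w) \log (\phi_\xi(\mu))(\overline{w}),
\]
both sums taken over $w \in \A^{[-k,k]}$ with $\mu(w) > 0$; note that a configuration with $(\phi_\xi(\mu))(\overline{w}) = 0$ cannot occur in the second sum, since it would force $\mu(w) = 0$.

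First I would treat the left-hand sum. By translation-invariance of $\mu$, the block $[-k,k]$ is a translate of $[1,2k+1]$, so this sum is exactly the normalized $(2k+1)$-block entropy of $\mu$, which converges to $h(\mu)$ as $k \to \infty$ by the one-dimensional instance of the definition of $h(\mu)$.

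Next, for the right-hand sum I would regroup the summation over $w$ according to the value of $\overline{w} \in \xi^{[-k,k]}$. For each fixed $\overline{w}$, the fibre $\{w \in \A^{[-k,k]} : \phi_\xi(w) = \overline{w}\}$ partitions the cylinder over $\overline{w}$, so $\sum_{w \,:\, \phi_\xi(w) = \overline{w}} \mu(w) = (\phi_\xi(\mu))(\overline{w})$ (configurations with $\mu(w)=0$ contributing nothing, consistently with the convention $0\log 0 = 0$). Since $\log (\phi_\xi(\mu))(\overline{w})$ does not depend on $w$ within a fibre, the right-hand sum equals
\[
\frac{1}{2k+1} \sum_{\overline{w} \in \xi^{[-k,k]}} (\phi_\xi(\mu))(\overline{w}) \log (\phi_\xi(\mu))(\overline{w}) \;=\; -\left( \frac{-1}{2k+1} \sum_{\overline{w} \in \xi^{[-k,k]}} (\phi_\xi(\mu))(\overline{w}) \log (\phi_\xi(\mu))(\overline{w}) \right).
\]
Since $\phi_\xi(\mu)$ is a translation-invariant measure on $\xi^{\zz}$, the parenthesized quantity is its normalized $(2k+1)$-block entropy, which tends to $h(\phi_\xi(\mu))$. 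Adding the two limits yields $h(\mu \ | \ \xi) = h(\mu) - h(\phi_\xi(\mu))$.

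The main thing to be careful about — rather than a genuine obstacle — is the bookkeeping with zero-probability configurations, and the observation that the two averages above converge \emph{separately} (so the identity does not merely hold for their difference); both points are automatic, since normalized block entropies of any stationary measure converge. Everything else is elementary algebra with the logarithm.
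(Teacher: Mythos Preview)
Your argument is correct: splitting the logarithm and regrouping the second sum over the fibres of $\phi_\xi$ reduces the statement to the convergence of the two block-entropy averages, which is immediate from the definitions and translation-invariance. The paper itself does not give a proof of this proposition but refers to \cite{Pa}; your direct chain-rule computation is the standard route and is exactly what one would expect that reference to contain.
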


The {\em weak topology} on the space of measures on $\A^\zz$ is the weakest topology under which
integrals of real-valued continuous functions converge.
Measure-theoretic (and conditional measure-theoretic) entropy are not continuous in the weak topology
(though they are upper semicontinuous); see \cite{walters}. For this reason, we need to define the $\dbar$
metric for measures, with respect to which the entropy map $\mu \mapsto h(\mu)$ is continuous
and in fact H\"{o}lder. We first need the preliminary definition of
a coupling.

\begin{definition}
For any measures $\mu$ on $X$ and $\nu$ on $Y$, a {\rm coupling} of $\mu$ and $\nu$ is a measure $\lambda$ on $X \times Y$ for which $\lambda(A \times Y) = \mu(A)$ for any $\mu$-measurable $A \subseteq X$ and $\lambda(X \times B) = \nu(B)$ for any $\nu$-measurable $B \subseteq Y$. The set of couplings of $\mu$ and $\nu$ is denoted by $C(\mu,\nu)$.
\end{definition}

\begin{definition}
For any measures $\mu$ and $\mu'$ on $\A^{\mathbb{Z}}$,
\[
\dbar(\mu,\mu') = \limsup_{n \rightarrow \infty} \min_{\lambda \in C(\mu|_{[-n,n]},\mu'|_{[-n,n]})} \int d_{2n+1}(x,y) \ d\lambda,
\]

\noindent
where $d_k$ is the
normalized $k$-letter Hamming distance between $k$-letter configurations given by $d_k(u,v) = \frac{1}{k} \sum_{1 \leq i \leq k} (1 - \delta_{u(i) v(i)})$.

\end{definition}

We briefly summarize some important properties of the $\dbar$
distance (for
more information, see \cite{rudolph} or \cite{Sh}).  We are
interested only in the $\dbar$ metric on the space of
translation-invariant measures on $\A^{\mathbb{Z}}$ for a fixed
alphabet $\A$. The $\dbar$ metric is complete and  dominates
distribution distance in the sense that for any configuration $w$ on
a finite interval of length $m$, $|\mu(w) - \mu'(w)| \le
m\dbar(\mu,\mu')$.
And H\"{o}lder continuity of entropy follows from the estimate:
letting $\epsilon = \dbar(\mu,\mu')$,
$$
|h(\mu) - h(\mu')| \leq \epsilon \log |\A| - \epsilon \log \epsilon
- (1 - \epsilon) \log(1 - \epsilon);
$$
(see [\cite{rudolph}, Theorem 7.9] for a proof in the ergodic case;
the same estimate holds in the general translation-invariant case).
%
%
%

Finally, we define the topological pressure of a continuous function
on a n.n. SFT, following \cite{ruelle}.

Let $X$ be a n.n. $\zz^d$-SFT, and let $f: X \rightarrow \mathbb{R}$ be a continuous function.
The topological pressure of $f$ on $X$ can be defined in several ways; one is as the purely topological notion of the asymptotic growth rate of the number of (locally or globally) admissible arrays in $X$, ``weighted by $f$.''
For our purposes though, the following definition, which is a consequence of the variational principle
(see \cite{Mi} for a short proof), is more convenient.

\begin{definition}
Given a n.n. $\zz^d$-SFT $X$ and $f \in C(X)$, the {\rm (topological) pressure} of $f$ on $X$ is:
$$
P(f) = P_X(f) = \sup_\mu \left(h(\mu) + \int f d\mu\right),
$$
where the $\sup$ is taken over all translation-invariant measures $\mu$ supported on $X$.
\end{definition}

The $\sup$ is always achieved and any measure which achieves the $\sup$
is called an {\em equilibrium state} for $X$ and $f$. (\cite{walters})

In the special case when $f = 0$, $P(f)$ is called the \textit{topological entropy} $h(X)$ of $X$,
and any equilibrium state is called a \textit{measure of maximal entropy} for $X$.

\section{Exponential approximation of MRF entropies}\label{MRFuniqueness}

The main measures we will study in this paper are Markov random fields (or MRFs) on sets of configurations on a graph $G$.

\begin{definition}
For any graph $G$ and finite alphabet $\A$, a measure $\mu$ on $\A^{V(G)}$ is called a {\rm $G$-Markov random field} (or $G$-MRF) if, for any finite $S \subset V(G)$, any $\eta \in \A^S$, any finite $T \subset V(G)$ s.t. $\partial(S,G) \subseteq T \subseteq V(G) \setminus S$, and any $\delta \in \A^T$ with $\mu(\delta) \neq 0$,
\[
\mu(\eta \ | \ \delta|_{\partial(S,G)}) = \mu(\eta \ | \ \delta).
\]
\end{definition}
Informally, $\mu$ is a $G$-MRF if, for any finite $S \subset V(G)$, the sites in $S$ and the sites in $V(G) \setminus (S \cup \partial(S,G))$ are $\mu$-conditionally independent given the sites on $\partial(S,G)$.

We will sometimes refer to a $G$-MRF simply as an MRF when $G$ is clear from context. We note that our definition of MRF differs slightly from the usual one, where the right-hand side would involve conditioning on an entire configuration on \newline $V(G) \setminus S$ a.e. rather than arbitrarily large finite subconfigurations of it. However, the definitions are equivalent and the finite approach leads to simpler calculations and proofs.


\begin{definition}
For any graph $G$ and finite alphabet $\A$, a {\rm $G$-specification} $\Lambda$ is defined by a set of finitely supported probability measures
\[
\{\Lambda^{\delta}(\cdot) \ | \ S \subset V(G), |S| < \infty, \delta \in \A^{\partial(S,G)}\},
\]
where, for each $\Lambda^{\delta}(\cdot)$, $\cdot$ ranges over all configurations in $\A^S$.
\end{definition}

Again we will sometimes refer to a $G$-specification simply as a specification. We say that a $\mathbb{Z}^d$-specification is \textit{translation-invariant} if $\Lambda^{\sigma_{v} \delta} = \sigma_v \Lambda^{\delta}$ for all $\delta$ and $v \in \zz^d$.

\begin{definition}
For any graph $G$, $\mu$ a measure on $\A^{V(G)}$, any finite set $S \subset V(G)$, and any $\delta \in \A^{\partial(S,G)}$ with $\mu(\delta) > 0$, denote by $\mu^{\delta}$ the measure on $\A^S$ defined by $\mu^{\delta}(u) = \mu(u \ | \ \delta)$.
\end{definition}

\begin{definition}
For any graph $G$ and finite alphabet $\A$, and $G$-specification $\Lambda$, a $G$-MRF $\mu$ {\rm is associated to} $\Lambda$ if $\mu^{\delta} = \Lambda^{\delta}$ for all finite $S \subseteq V(G)$ and $\delta \in \A^{\partial(S,G)}$ with $\mu(\delta) > 0$.
\end{definition}

Note that in checking whether an MRF $\mu$ is associated to a
specification $\Lambda$, many of the $\Lambda^{\delta}$ are totally
irrelevant; namely those which correspond to $\delta$ which have
zero $\mu$-measure.

We say that a $G$-specification $\Lambda$ is \textit{valid} if there
is at least one $G$-MRF associated to it. If there is exactly one
such MRF, we denote it by $\mu(\Lambda)$.

Often a specification is required to satisfy a consistency
condition; see [\cite{Georgii}, Definition 1.23].  This condition is
important for results that assert the existence of an MRF associated
to a given specification: the existence of an MRF $\mu$ forces
certain consistencies of specifications on the support of $\mu$.
However, in our work, we do not need to require consistency:
whenever we need existence, we will either assume it (i.e., that the
specification is valid) or assume a condition that guarantees it
(for instance, in Proposition~\ref{stripvalidity}). And the
consistency condition is not needed in uniqueness results such as
Theorem~\ref{uniqueness}.

To obtain good approximations of MRF entropies, we will use a condition from \cite{vdBM} on $G$-specifications, which was used there to prove uniqueness of the associated MRF. We first need some definitions.

\begin{definition}
For any finite set $S$ and two measures $\mu$ and $\nu$ on $S$, the {\rm variational distance} between $\mu$ and $\nu$ is
\[
d(\mu, \nu) = \frac{1}{2} \sum_{s \in S} |\mu(s) - \nu(s)|.
\]
\end{definition}

We note that $d(\mu, \nu) = 1$ iff $\mu$ and $\nu$ have disjoint supports.

\begin{definition}
For any graph $G$, finite alphabet $\A$, $g \in V(G)$, and valid $G$-specification $\Lambda$, define
\[
q_g(\Lambda) := \max_{\delta, \delta' \in \A^{N^G_g}} d(\Lambda^{\delta}, \Lambda^{\delta'})
\]
and $q(\Lambda) := \sup_{g \in V(G)} q_g(\Lambda)$.
\end{definition}

\begin{definition}
For any finite alphabet $\A$, graph $G$, and probability distribution $\lambda$ on $\A$, $P_{\lambda}$ represents the Bernoulli (i.i.d.) measure on $\A^{V(G)}$ whose distribution on each site is $\lambda$.
\end{definition}

\begin{definition}
For any graph $G$, the {\rm critical probability for site percolation on $G$}, denoted by $p_c(G)$, is defined as the supremum of $q \in [0,1]$ for which, given the alphabet $\{0,1\}$ and the graph $G$, the $P_{(1-q,q)}$-probability that there is an infinite connected subgraph of $G$ with $1$s at every site is zero.
\end{definition}

We point out that percolation theory is an extremely rich area of mathematics, which we give short shrift to here. For more information, see \cite{grimmett}.

When the graph $G$ is omitted, we understand $p_c$ to denote $p_c(\mathbb{Z}^2)$. Simulations suggest that $p_c \approx 0.593$, but the best known lower bound is $p_c > .556$, proved by van den Berg and Ermakov. (\cite{vdBE})

\begin{theorem}\label{uniqueness} {\rm (\cite{vdBM}, Corollary 2)} If $\Lambda$ is a valid $G$-specification and $q(\Lambda) < p_c(G)$, then there is a unique $G$-MRF associated to $\Lambda$.
\end{theorem}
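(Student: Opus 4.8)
The plan is to deduce Theorem~\ref{uniqueness} from a disagreement-percolation coupling argument in the style of \cite{vdBM}. The essential idea is to couple two $G$-MRFs $\mu$ and $\mu'$, both associated to $\Lambda$, on larger and larger finite volumes so that the set of sites where the two configurations disagree is stochastically dominated by a site-percolation cluster with parameter $q(\Lambda)$. Since $q(\Lambda) < p_c(G)$, such a cluster is almost surely finite, in fact the origin lies in an infinite disagreement cluster with probability zero; this forces $\mu$ and $\mu'$ to agree on every cylinder, hence $\mu = \mu'$.

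In more detail, the first step is to fix an exhaustion $S_1 \subset S_2 \subset \cdots$ of $V(G)$ by finite sets and, for each $n$, build a coupling $\lambda_n$ of $\mu|_{S_n}$ and $\mu'|_{S_n}$ by revealing sites one at a time in some fixed order, at each step using the Markov property to reduce the conditional law of the next site to $\Lambda^{\delta}$ resp.\ $\Lambda^{\delta'}$ where $\delta,\delta'$ are the already-revealed neighboring values. At a site all of whose revealed neighbors agree, we couple optimally so that the two coordinates agree with probability $1 - d(\Lambda^{\delta},\Lambda^{\delta'}) \ge 1 - q(\Lambda)$ (here one uses that, for a single site, $\Lambda^{\delta}$ and $\Lambda^{\delta'}$ differ only through $\delta,\delta'$ restricted to $N^G_g$, and invokes the definition of $q_g(\Lambda)$); at a site with a disagreeing revealed neighbor we couple arbitrarily and declare it ``bad.'' The second step is the standard domination lemma: the law of the bad set under $\lambda_n$ is dominated by Bernoulli site percolation $P_{(1-q(\Lambda),q(\Lambda))}$ restricted to $S_n$, because conditioned on any history the probability of a new bad site is at most $q(\Lambda)$. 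The third step is to pass to a weak limit $\lambda$ of the $\lambda_n$ (using compactness of the space of couplings), observe that $\lambda$ is a coupling of $\mu$ and $\mu'$ under which the disagreement set is contained in the union of percolation clusters of the bad sites, and conclude that for any fixed site $v$, $\lambda(x_v \ne y_v) \le P_{(1-q(\Lambda),q(\Lambda))}(v \text{ in an infinite } 1\text{-cluster}) = 0$ by the definition of $p_c(G)$ together with $q(\Lambda) < p_c(G)$. Then $\mu(w) = \mu'(w)$ for every finite configuration $w$, giving uniqueness.

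The main obstacle is making the coupling construction genuinely rigorous: one must be careful that the sequential-revealing procedure respects the MRF property (the conditional law of an unrevealed site given the revealed ones depends only on revealed \emph{neighbors}, which requires choosing the revealing order so that at each stage the revealed set has the right separation structure, or more simply conditioning on a boundary layer), and one must verify the stochastic-domination step uniformly in the history. A secondary technical point is the limit argument: weak limits of couplings are couplings, and the event ``$v$ lies in a finite bad-cluster, all of whose boundary sites are good'' is (up to boundary effects) a cylinder event on which the two marginals are forced to agree, so some care with how the finite-volume estimates survive the limit is needed. Since, however, all of this is exactly the content of \cite{vdBM}, the argument here is to cite that paper and merely indicate the above outline; no new idea is required beyond noting that their hypothesis is precisely $q(\Lambda) < p_c(G)$ and their conclusion precisely the uniqueness of the associated MRF.
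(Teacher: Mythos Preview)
The paper does not prove this theorem at all; it is stated as a citation of \cite{vdBM}, Corollary~2, and no argument is given beyond the one-sentence remark following the statement that the result is ``roughly speaking, proved by showing that \ldots\ boundary conditions on large sets \ldots\ exert very little influence on sites near the center.'' Your proposal is therefore not in competition with any proof in the paper---you are simply supplying an outline of the \cite{vdBM} argument that the paper chose to omit. That outline is accurate in spirit (disagreement-percolation coupling, stochastic domination of the disagreement set by Bernoulli($q(\Lambda)$) site percolation, then subcriticality forces agreement), and you correctly flag the delicate points (respecting the MRF property in the sequential construction, passing to the limit). Since the paper itself defers entirely to \cite{vdBM}, your decision to do the same after sketching the mechanism is appropriate; nothing further is required.
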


Theorem~\ref{uniqueness} is, roughly speaking, proved by showing that for a $G$-specification $\Lambda$ with $q(\Lambda) < p_c(G)$, boundary conditions on large sets (such as rectangles in $\mathbb{Z}^2$) exert very little influence on sites near the center. It will be necessary for us to quantify exactly how this influence decays, and so we will use the methods of \cite{vdBM} to prove some finitistic results.

\begin{theorem}\label{vdBM} For any valid $\mathbb{Z}^2$-specification $\Lambda$ with $q(\Lambda) < p_c$ (with unique associated MRF $\mu = \mu(\Lambda)$), there exist $K, L > 0$ such that for any nonempty finite set $S \subset \mathbb{Z}^2$, for any rectangle $R \supset S$, and for any configurations $\delta$ and $\delta'$ on $\partial R$ with positive $\mu(\Lambda)$-probability, there exists $\lambda \in C(\mu^{\delta}|_S, \mu^{\delta'}|_S)$ such that for any $s \in S$, $\lambda(\{(x,y) \ : \ x(s) \neq y(s)\}) < Ke^{-Ld}$, where $d$ is the distance between $S$ and the set of $t \in \partial R$ for which $\delta(t) \neq \delta'(t)$. (We take $d = \infty$ if the latter set is empty.)
\end{theorem}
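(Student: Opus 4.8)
The plan is to implement the disagreement-percolation coupling of van den Berg and Maes (\cite{vdBM}) in a finitistic, quantitative form, and then to feed the hypothesis $q(\Lambda) < p_c$ into a site-percolation estimate to extract the exponential decay. First I would build the coupling $\lambda$ of $\mu^{\delta}|_S$ and $\mu^{\delta'}|_S$ site by site, using an arbitrary enumeration of the sites of the rectangle $R$ proceeding inward from $\partial R$; at each site one couples the two single-site conditional distributions (given the already-revealed configurations on both coordinates and the respective boundary data $\delta,\delta'$) using an optimal coupling, which by definition of variational distance disagrees with probability at most $q_g(\Lambda) \le q(\Lambda)$ \emph{whenever} the two conditioning configurations in the neighborhood differ, and with probability $0$ when they agree. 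This is exactly the structure of \cite{vdBM}: the set of sites where the two coordinates of $\lambda$ disagree is stochastically dominated by a dependent site-percolation process in which each site is "open" with conditional probability at most $q(\Lambda)$ given the rest, and disagreements can only propagate along nearest-neighbor connections emanating from the disagreement set on $\partial R$.

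The key comparison step is then: $\lambda(\{(x,y): x(s)\neq y(s)\})$ is bounded by the probability that, in this dominating process, there is an open nearest-neighbor path in $\mathbb{Z}^2$ from $s$ to the set $D := \{t \in \partial R : \delta(t)\neq\delta'(t)\}$. Because the dominating process has conditional open-probabilities bounded by $q(\Lambda) < p_c$, a standard stochastic-domination argument (Liggett--Schonmann--Stacey, or directly the domination-by-Bernoulli used in \cite{vdBM}) lets me compare it to an i.i.d.\ Bernoulli($p$) site percolation with $q(\Lambda) < p < p_c$. For subcritical Bernoulli site percolation on $\mathbb{Z}^2$, the probability that a fixed site is connected to a set at distance $d$ decays exponentially in $d$: one gets a bound of the form $Ke^{-Ld}$ by summing over self-avoiding paths of length $\ge d$ and invoking exponential decay of the radius of the open cluster (Menshikov/Aizenman--Barsky, or for a cleaner elementary route, a Peierls-type count since we are strictly below $p_c$ and $\mathbb{Z}^2$ has bounded connective constant). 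The constants $K,L$ depend only on $p$ (hence only on $q(\Lambda)$), not on $S$, $R$, or $s$, which is precisely what the statement demands; the uniformity is automatic because the dominating process does not see $S$ or $R$ beyond through the location of $D$.

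I expect the main obstacle to be making the site-by-site coupling genuinely \emph{Markovian with respect to a percolation process} — that is, verifying that the "disagreement cluster" really is dominated by an independent (or finite-energy-bounded) Bernoulli field rather than merely by a process with the right marginals. One must be careful that when the revealed neighbors of the current site agree on both coordinates, the optimal coupling can be taken to be the diagonal (zero disagreement probability), and that when they differ, the bound $q(\Lambda)$ holds \emph{conditionally} on the entire past of the revealing procedure; this conditional bound, together with the fact that $\{0,1\}$-valued processes with conditionally bounded densities are dominated by Bernoulli fields, is what legitimizes the comparison. Since $q(\Lambda)<p_c$ is strict, there is slack to choose the Bernoulli parameter $p$ strictly between $q(\Lambda)$ and $p_c$, and the domination lemma applies. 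The remaining work — exponential decay of subcritical connectivity on $\mathbb{Z}^2$ and the self-avoiding-path union bound — is standard and I would cite \cite{grimmett}; the novelty here is only the bookkeeping that turns the qualitative uniqueness proof of \cite{vdBM} into the explicit estimate $\lambda(\{x(s)\neq y(s)\}) < Ke^{-Ld}$ with $d$ the distance from $S$ to the disagreement locus.
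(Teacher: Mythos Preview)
Your proposal is correct and follows essentially the same route as the paper: both invoke the disagreement-percolation coupling of van den Berg--Maes to obtain (i) stochastic domination of the disagreement field by Bernoulli site percolation and (ii) the path-of-disagreements property, then apply the Menshikov/Aizenman--Barsky exponential decay for subcritical percolation. The paper simply cites Theorem~1 of \cite{vdBM} for properties (i) and (ii) rather than rebuilding the sequential coupling as you do.

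One small simplification: you do not need the Liggett--Schonmann--Stacey detour or any slack parameter $p$ strictly between $q(\Lambda)$ and $p_c$. The sequential coupling you describe already yields direct stochastic domination of the disagreement field by i.i.d.\ Bernoulli with parameter exactly $q(\Lambda)$ (this is the content of \cite{vdBM}, Theorem~1, and follows from the Holley-type fact that a $\{0,1\}$-valued process whose conditional $1$-probabilities are uniformly bounded by $q$ is dominated by $P_{(1-q,q)}$). Since $q(\Lambda) < p_c$ is assumed, you can feed $q = q(\Lambda)$ directly into the subcritical exponential-decay bound.
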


\noindent
\textit{Proof.} Given any such $\Lambda$, $S$, $R$, $\delta$, and $\delta'$, Theorem 1 from \cite{vdBM} proves the existence of $\lambda' \in C(\mu^{\delta},\mu^{\delta'})$ with the following two properties: (Following \cite{vdBM}, we will sometimes think of $\lambda'$ as a measure on $\A^{R \cup \partial R} \times \A^{R \cup \partial R}$, where pairs $(u,v)$ in the support of $\lambda'$ are thought of as equivalent to $(u \delta, v \delta')$.)\\

(i) Define the map $\phi$ from $(\A^{R \cup \partial R}) \times (\A^{R \cup \partial R})$ to $\{0,1\}^{R \cup \partial R}$ by $(\phi(x,y))(v) = 1$ if and only if $x(v) \neq y(v)$. Then the measure $\phi\lambda'$ on $\{0,1\}^{R \cup \partial R}$ is stochastically dominated by $P_{(1-q,q)}$, where $q = q(\Lambda)$. (We do not define stochastic dominance in general, but can give a simple definition which suffices for our setup. Given a finite set $S$ and measures $\mu$,$\nu$ on $\{0,1\}^S$, $\mu$ is stochastically dominated by $\nu$ if for any set $C$ of configurations which is closed under changing $0$s to $1$s, $\mu(C) \leq \nu(C)$.)

(ii) For a set of $(x,y) \in (\A^{R \cup \partial R}) \times (\A^{R \cup \partial R})$ with $\lambda'$-probability $1$, and for any $v \in R$, $x(v) \neq y(v)$ if and only if there is a path $P$ of sites in $\mathbb{Z}^2$ from $v$ to $\partial R$ for which $x(p) \neq y(p)$ for all $p \in P$.\\

Note that for any fixed $v \in R$, this means that
\begin{multline}\notag
\lambda'(\{(x,y) \ : \ x(v) \neq y(v)\})\\
= \lambda'(\{(x,y) \ : \ \textrm{there is a path } P \textrm{ from } v \textrm{ to } \partial R \textrm{ such that } x(p) \neq y(p) \ \forall p \in P\})\\
= (\phi \lambda')(\textrm{there is a path } P \textrm{ of } $1$\textrm{s from } v \textrm{ to } \partial R\})\\
\leq P_{(1-q,q)}(\textrm{there is a path of } $1$\textrm{s from } v \textrm{ to } \partial R).
\end{multline}

A classical theorem proved by Menshikov (\cite{Me}) and Aizenmann and Barsky (\cite{AB}) shows that for any
$q < p_c$, there exist $K = K(q)$ and $L = L(q)$ so that for any $n$, $P_{(1-q,q)}(\textrm{there is a path of } $1$\textrm{s from } 0 \textrm{ to } \partial [-n,n]^2) < Ke^{-Ln}$. This clearly implies that $P_{(1-q,q)}(\textrm{there is a path of } $1$\textrm{s from } v \textrm{ to } \partial R) < Ke^{-Ld_v}$, where $d_v$ is the distance from $v$ to the set of sites in $\partial R$ at which $\delta$ and $\delta'$ disagree.

Therefore, if we define $\lambda = \lambda'|_{(\A^S \times \A^S)}$, then for any $s \in S$, $\lambda(\{(x,y) \ : \ x(s) \neq y(s)\}) = \lambda'(\{(x,y) \ : \ x(s) \neq y(s)\}) \leq Ke^{-Ld_s} \leq Ke^{-Ld}$, where $d$ is just the minimum value of $d_s$ for $s \in S$.

\begin{flushright}
$\blacksquare$\\
\end{flushright}

We will prove a slightly more general version of Theorem~\ref{vdBM} for $\delta$ and $\delta'$ on the boundaries of possibly different rectangles.

\begin{theorem}\label{gen-vdBM} For any valid $\mathbb{Z}^2$-specification $\Lambda$ with $q(\Lambda) < p_c$ (with unique associated MRF $\mu = \mu(\Lambda)$), there exist $K, L > 0$ such that for any finite set $S \subset \mathbb{Z}^2$, for any rectangles $R' \supset R \supset S$, and for any configurations $\delta$ and $\delta'$ on $\partial R$ and $\partial R'$ respectively with positive $\mu(\Lambda)$-probability, there exists $\lambda \in C(\mu^{\delta}|_S, \mu^{\delta'}|_S)$ such that for any $s \in S$, $\lambda(\{(x,y) \ : \ x(s) \neq y(s)\}) < Ke^{-Ld}$, where $d$ is the distance between $S$ and the set of $t \in \partial R$ for which either $t \notin \partial R'$ or $t \in \partial R'$ and $\delta(t) \neq \delta'(t)$.
\end{theorem}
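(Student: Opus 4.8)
The plan is to reduce Theorem~\ref{gen-vdBM} to Theorem~\ref{vdBM} by conditioning on a configuration on $\partial R$. Let $K,L>0$ be the constants given by Theorem~\ref{vdBM} for $\Lambda$, and fix $S$, rectangles $R' \supset R \supset S$, $\delta \in \A^{\partial R}$ and $\delta' \in \A^{\partial R'}$ as in the statement, both of positive $\mu$-probability; write $D$ for the set of $t \in \partial R$ such that $t \notin \partial R'$, or $t \in \partial R'$ and $\delta(t) \neq \delta'(t)$, and $d = \mathrm{dist}(S,D)$. The starting point is a geometric observation: since $R \subseteq R'$, we have $\partial R \subseteq R' \cup \partial R'$ and $\partial R' \setminus \partial R$ is disjoint from $R$, so every path in $\mathbb{Z}^2$ from $S$ to $\partial R' \setminus \partial R$ must pass through $\partial R$; moreover $\partial R = A \sqcup D$, where $A := \{t \in \partial R \cap \partial R' : \delta(t) = \delta'(t)\}$.

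Next I would record the MRF consequence that drives the reduction. For $\gamma \in \A^{\partial R}$ with $\mu(\gamma \cap \delta') > 0$, one has $\mu(\gamma)>0$, $\gamma$ agrees with $\delta'$ on $\partial R \cap \partial R'$ — hence with $\delta$ on $A$ — and
\[
\mu(u \mid \gamma \cap \delta') = \mu(u \mid \gamma) \qquad \text{for every } u \in \A^S .
\]
This is the global Markov property of $\mu$ with separating set $\partial R$, which I would deduce from the (local) MRF definition by taking $S^* := \{v \in \mathbb{Z}^2 : v \text{ is joined to } S \text{ by a path disjoint from } \partial R\}$: the geometric observation shows $S^*$ is a finite subset of $R$ with $\partial(S^*,\mathbb{Z}^2) \subseteq \partial R$, so the MRF definition applied to $S^*$ — first with boundary data $\gamma$ on $\partial R$ together with $\delta'$ on $\partial R' \setminus \partial R$, then with $\gamma$ on $\partial R$ alone — shows both sides above restrict from the common measure $\mu(\cdot \mid \gamma|_{\partial(S^*,\mathbb{Z}^2)})$ on $\A^{S^*}$. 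Summing over $\gamma$ yields the mixture identity $\mu(u \mid \delta') = \sum_\gamma \mu(\gamma \mid \delta')\,\mu(u \mid \gamma)$ for $u \in \A^S$, the sum over $\gamma \in \A^{\partial R}$ with $\mu(\gamma \cap \delta') > 0$, all of which have positive $\mu$-probability and agree with $\delta$ on $A$.

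Finally I would assemble the coupling. For each such $\gamma$, Theorem~\ref{vdBM} applied to $R \supset S$ with boundary configurations $\delta$ and $\gamma$ on $\partial R$ produces $\lambda_\gamma \in C(\mu^\delta|_S, \mu^\gamma|_S)$ with $\lambda_\gamma(\{(x,y) : x(s) \neq y(s)\}) < K e^{-L d_\gamma}$ for all $s \in S$, where $d_\gamma = \mathrm{dist}(S, \{t \in \partial R : \delta(t) \neq \gamma(t)\})$. Since $\gamma|_A = \delta|_A$, that disagreement set lies in $\partial R \setminus A = D$, so $d_\gamma \ge d$. Set $\lambda := \sum_\gamma \mu(\gamma \mid \delta')\,\lambda_\gamma$. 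Its first marginal is visibly $\mu^\delta|_S$ (since $\sum_\gamma \mu(\gamma\mid\delta')=1$) and, by the mixture identity, its second marginal is $\mu^{\delta'}|_S$, so $\lambda \in C(\mu^\delta|_S, \mu^{\delta'}|_S)$; and for each $s \in S$,
\[
\lambda(\{(x,y) : x(s) \neq y(s)\}) = \sum_\gamma \mu(\gamma \mid \delta')\,\lambda_\gamma(\{(x,y) : x(s) \neq y(s)\}) < K e^{-L d},
\]
which is the assertion (with the usual reading that the bound is $0$, attained by the diagonal coupling on $S$, when $D = \emptyset$).

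The bookkeeping with positive-probability configurations and the mixture identity are routine. The step I expect to require the most care is the derivation of the global Markov identity $\mu(u \mid \gamma \cap \delta') = \mu(u \mid \gamma)$ from the local MRF definition: one must verify that $S^*$ is finite and that $\partial(S^*,\mathbb{Z}^2) \subseteq \partial R$ — precisely where $S \subseteq R$ and the enclosing role of $\partial R$ are used — and that $S^*$ sees the same boundary data (a sub-configuration of $\gamma$) whether or not we also condition on $\delta'$ off $\partial R$. Granting that, the conclusion follows at once from Theorem~\ref{vdBM}.
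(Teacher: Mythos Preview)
Your proof is correct and follows essentially the same approach as the paper: both decompose $\mu^{\delta'}|_S$ as a convex combination $\sum_\gamma \alpha_\gamma\, \mu^{\gamma}|_S$ over configurations $\gamma$ on $\partial R$ that agree with $\delta'$ on $\partial R \cap \partial R'$, apply Theorem~\ref{vdBM} to each pair $(\delta,\gamma)$, and average the resulting couplings. Your write-up is in fact more careful than the paper's, which simply asserts the mixture decomposition without deriving it from the MRF definition as you do via the set $S^*$.
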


\noindent
\textit{Proof.} We will be using Theorem~\ref{vdBM}. First, let's note that $\mu^{\delta'}|_{R}$ can be written as a weighted average of the measures $\mu^{\eta}$, where $\eta$ ranges over all configurations on $\partial R$ which agree with $\delta'$ on $\partial R \cap \partial R'$.
This means that in particular, $\mu^{\delta'}|_S = \sum_{\eta} \alpha_{\eta} (\mu^{\eta}|_S)$ for some nonnegative numbers $\alpha_{\eta}$ summing to $1$. By Theorem~\ref{vdBM} there exist $K, L > 0$ so that for any $\eta$ there exists $\lambda_{\eta} \in C(\mu^{\delta}|_S, \mu^{\eta}|_S)$ with the property that for any $s \in S$, $\lambda_{\eta}(\{(x,y) \ : \ x(s) \neq y(s)\}) < Ke^{-Ld_{\eta}}$, where $d_{\eta}$ is the distance from any $s \in S$ to the set of $t \in \partial R$ for which $\delta(t) \neq \eta(t)$. Take $\lambda = \sum_{\eta} \alpha_{\eta} \lambda_{\eta}$. It is clear that $\lambda \in C(\mu^{\delta}|_S, \mu^{\delta'}|_S)$. Also, clearly

\[
\lambda(\{(x,y) \ : \ x(s) \neq y(s)\}) = \sum_{\eta} \alpha_{\eta} \lambda_{\eta}(\{(x,y) \ : \ x(s) \neq y(s)\}) \leq \sum_{\eta} \alpha_{\eta} Ke^{-Ld_{\eta}}.
\]

Note that for any $\eta$ agreeing with $\delta'$ on $\partial R \cap \partial R'$, $d_{\eta} \geq d$ as defined in the theorem. Therefore, clearly $\lambda(\{(x,y) \ : \ x(s) \neq y(s)\}) < Ke^{-Ld}$, and we are done.

\begin{flushright}
$\blacksquare$\\
\end{flushright}

Given a valid $\mathbb{Z}^2$-specification $\Lambda$ with $q(\Lambda) < p_c$, we wish to define some specifications (and associated MRFs) on the maximal subgraphs $H_{m,n}$ of $\mathbb{Z}^2$ defined by $V(H_{m,n}) = \mathbb{Z} \times [m,n]$, yielding measures on sets of configurations on biinfinite horizontal strips.


Of course $\Lambda$ itself does not contain enough information to define an \newline $H_{m,n}$-specification; there exist sets of the form $\partial(S,H_{m,n})$ for some finite set $S \subset H_{m,n}$, but which are not expressible as $\partial T$ for any finite set $T \subset \mathbb{Z}^2$. (For instance, picture the `three-sided' boundary of a rectangle which includes part of the top row in $H_{m,n}$.) We therefore supplement $\Lambda$ with boundary conditions as follows. Suppose that $t,b \in \A^{\mathbb{Z}}$, and we will define a $H_{m,n}$-specification $\Lambda_{m,n,t,b}$.

For any finite $S \subset \mathbb{Z} \times [m,n]$, $\eta \in \A^S$, and $\delta \in \A^{\partial(S,H_{m,n})}$, define
\begin{equation}\label{mntbdefn}
\Lambda_{m,n,t,b}^{\delta}(\eta) = \Lambda^{\xi}(\eta),
\end{equation}
where $\xi = (t|_{\partial S \cap (\mathbb{Z} \times \{n+1\})}) (b|_{\partial S \cap (\mathbb{Z} \times \{m-1\})}) \delta$, the concatenation of $t|_{\partial S \cap (\mathbb{Z} \times \{n+1\})}$,
$b|_{\partial S \cap (\mathbb{Z} \times \{m-1\})}$, and $\delta$. In other words, a configuration on $\partial(S,H_{m,n})$ is supplemented by symbols from $t$ and $b$ above and below $H_{m,n}$, if necessary, to extend it to $\partial S$.
The following gives a
sufficient condition on $m,n,t,b$ for the validity of $\Lambda_{m,n,t,b}$.

\begin{definition}\label{compatible}
For a $\zz^2$-specification $\Lambda$ with alphabet $\A$,
integers $m < n$ and $t, b \in \A^{\mathbb{Z}}$, we say that
$m,n,t,b$ {\rm is compatible with} $\Lambda$ if there exists an
MRF $\mu$ associated to $\Lambda$ such that for all sufficiently large $k$, there exists $\delta_k \in \A^{\partial ([-k,k] \times [m,n])}$ with positive $\mu$-measure whose top row is $t|_{[-k,k]}$ and whose bottom row is $b|_{[-k,k]}$.
\end{definition}

\begin{proposition}\label{stripvalidity}
For a valid $\zz^2$-specification $\Lambda$ with alphabet $\A$,
integers $m < n$ and $t, b \in \A^{\mathbb{Z}}$, if
$m,n,t,b$ is compatible with $\Lambda$, then
$\Lambda_{m,n,t,b}$ is a valid $H_{m,n}$-specification.
\end{proposition}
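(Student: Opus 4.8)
The plan is to construct, directly, a single $H_{m,n}$-MRF associated to $\Lambda_{m,n,t,b}$, obtained as a subsequential weak limit of conditioned copies of a witnessing MRF for $\Lambda$. Using Definition~\ref{compatible}, fix an MRF $\mu$ associated to $\Lambda$ together with, for each $k$ beyond some $k_0$, a configuration $\delta_k\in\A^{\partial([-k,k]\times[m,n])}$ with $\mu(\delta_k)>0$, top row $t|_{[-k,k]}$, and bottom row $b|_{[-k,k]}$. Write $B_k=[-k,k]\times[m,n]$ and let $\nu_k=\mu^{\delta_k}$, a measure on $\A^{B_k}$; intuitively $\nu_k$ is ``$\mu$ restricted to the strip with boundary condition $\delta_k$'', and the MRF we want will be a limit of the $\nu_k$ as $k\to\infty$.

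The crucial step is a finitary identity. Fix $k\ge k_0$, a finite $S\subseteq B_k$, a finite $T$ with $\partial(S,H_{m,n})\subseteq T\subseteq B_k\setminus S$, and $\eta\in\A^S$, $\gamma\in\A^T$ with $\nu_k(\gamma)>0$; set $\delta=\gamma|_{\partial(S,H_{m,n})}$. We claim $\nu_k(\eta\mid\gamma)=\Lambda_{m,n,t,b}^{\delta}(\eta)$. Since $\nu_k(\gamma)>0$, the configuration $\gamma$ is consistent with $\delta_k$; letting $\zeta$ denote the joint configuration they determine on $T\cup\partial B_k$ (a set disjoint from $S$), we have $\mu(\zeta)>0$ and, by a direct computation, $\nu_k(\eta\mid\gamma)=\mu(\eta\mid\zeta)$. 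Every site of $\partial(S,\zz^2)$ lying outside the rows $[m,n]$ sits in row $m-1$ or $n+1$ and is adjacent to a site of $S\subseteq B_k$, hence belongs to $\partial B_k$; together with $\partial(S,H_{m,n})\subseteq T$ this shows $\partial(S,\zz^2)\subseteq T\cup\partial B_k$, so the $\zz^2$-MRF property of $\mu$ gives $\mu(\eta\mid\zeta)=\mu(\eta\mid\zeta|_{\partial(S,\zz^2)})$. The restriction $\zeta|_{\partial(S,\zz^2)}$ equals $\delta$ on the rows $[m,n]$ and equals $t$ (resp.\ $b$) at the sites of row $n+1$ (resp.\ row $m-1$), since those sites carry symbols of $\delta_k$; that is precisely the configuration $\xi$ of~\eqref{mntbdefn}. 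As $\mu$ is associated to $\Lambda$ and $\mu(\xi)\ge\mu(\zeta)>0$, we conclude $\nu_k(\eta\mid\gamma)=\mu^{\xi}(\eta)=\Lambda^{\xi}(\eta)=\Lambda_{m,n,t,b}^{\delta}(\eta)$, as claimed.

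To finish, extend each $\nu_k$ arbitrarily to a Borel probability measure on the compact metrizable space $\A^{\zz\times[m,n]}$, and use sequential compactness of the space of such measures in the weak topology to extract $k_j\to\infty$ with $\nu_{k_j}\to\nu$. For any configuration $w$ on a finite set $W$, once $W\subseteq B_{k_j}$ the value $\nu_{k_j}(w)$ does not depend on the extension, so $\nu(w)=\lim_j\nu_{k_j}(w)$. Now fix a finite $S$, $\eta\in\A^S$, a finite $T$ with $\partial(S,H_{m,n})\subseteq T\subseteq V(H_{m,n})\setminus S$, and $\gamma\in\A^T$ with $\nu(\gamma)>0$; put $\delta=\gamma|_{\partial(S,H_{m,n})}$. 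For all large $j$ we have $S\cup T\subseteq B_{k_j}$ and $\nu_{k_j}(\gamma)>0$ (since $\nu_{k_j}(\gamma)\to\nu(\gamma)>0$), so the finitary identity gives $\nu_{k_j}(\eta\gamma)=\Lambda_{m,n,t,b}^{\delta}(\eta)\,\nu_{k_j}(\gamma)$; letting $j\to\infty$ yields $\nu(\eta\gamma)=\Lambda_{m,n,t,b}^{\delta}(\eta)\,\nu(\gamma)$, i.e.\ $\nu(\eta\mid\gamma)=\Lambda_{m,n,t,b}^{\delta}(\eta)$. The right-hand side depends only on $\delta=\gamma|_{\partial(S,H_{m,n})}$, so this simultaneously shows that $\nu$ is an $H_{m,n}$-MRF (the case $T=\partial(S,H_{m,n})$ identifies $\nu(\eta\mid\gamma|_{\partial(S,H_{m,n})})$ with the same value) and that $\nu^{\delta}=\Lambda_{m,n,t,b}^{\delta}$ for all finite $S$ and $\delta\in\A^{\partial(S,H_{m,n})}$ with $\nu(\delta)>0$, i.e.\ that $\nu$ is associated to $\Lambda_{m,n,t,b}$. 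Hence $\Lambda_{m,n,t,b}$ is valid.

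The main obstacle is the boundary bookkeeping in the second step: one must check carefully that conditioning $\mu$ on $\delta_k$ converts the conditional law on a finite $S$, given a configuration on $\partial(S,H_{m,n})$, into $\Lambda$ applied to that configuration supplemented above by $t$ and below by $b$. This is exactly where the hypothesis that $m,n,t,b$ is compatible with $\Lambda$ is used, as it is what produces boundary configurations $\delta_k$ with the prescribed top and bottom rows and positive $\mu$-measure. The limiting argument is then routine soft analysis.
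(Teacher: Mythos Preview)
Your proof is correct and follows essentially the same approach as the paper: take the conditioned measures $\mu^{\delta_k}$, extend them to the full strip, and pass to a weak subsequential limit to obtain an $H_{m,n}$-MRF associated to $\Lambda_{m,n,t,b}$. The paper's proof is considerably terser, simply asserting that ``by definition of $\Lambda_{m,n,t,b}$ and the fact that $\mu$ is an MRF associated to $\Lambda$, any weak limit of a subsequence of the measures $\mu^{\delta_k}$ is an MRF associated to the specification $\Lambda_{m,n,t,b}$''; your finitary identity and the verification that $\partial(S,\zz^2)\subseteq T\cup\partial B_k$ are exactly the boundary bookkeeping that the paper leaves implicit.
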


\begin{proof}
Let $\mu$ be an associated MRF and for all sufficiently large $k$, $\delta_k$ as described
in Definition~\ref{compatible}.
 Define the measures $\mu^{\delta_k}$. We wish to take a weak limit of a subsequence of these measures, so they must be extended to measures on all of $\A^{\zz \times [m,n]}$: choose any $a \in \A$, and for each $k$ extend each configuration in the support of $\mu^{\delta_k}$ to all of
 $\zz \times [m,n]$ by filling all unoccupied sites with $a$'s.

By definition of $\Lambda_{m,n,t,b}$ and the fact that $\mu$ is an MRF associated to $\Lambda$, any weak limit of a subsequence of the measures $\mu^{\delta_k}$ is an MRF associated to the specification $\Lambda_{m,n,t,b}$, and so $\Lambda_{m,n,t,b}$ is valid.
\end{proof}

While the compatibility condition may be difficult to check in general, it is checkable
for certain special kinds of MRFs introduced in later sections.

We can also give a sufficient condition for uniqueness of an $H_{m,n}$-MRF associated to $\Lambda_{m,n,t,b}$. In fact, it requires a less restrictive bound on $q(\Lambda)$, which will be useful for some later discussions.

\begin{proposition}\label{stripuniqueness}
For any integers $m<n$ and any valid $H_{m,n}$-specification $\Lambda_{m,n,t,b}$ induced by a $\mathbb{Z}^2$-specification $\Lambda$ with $q(\Lambda) < 1$ and boundary conditions $t$ and $b$, $\Lambda_{m,n,t,b}$ has a unique associated MRF.
\end{proposition}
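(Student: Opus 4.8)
The plan is to invoke Theorem~\ref{uniqueness}, but applied to the graph $G = H_{m,n}$ rather than to $\zz^2$. The key point is that a strip of bounded height is quasi-one-dimensional, so its critical probability for site percolation equals $1$; since we are only assuming $q(\Lambda) < 1$, this weaker hypothesis will suffice.

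First I would check that $q(\Lambda_{m,n,t,b}) \le q(\Lambda)$. Fix $g \in \zz \times [m,n]$; its neighborhood $N^{H_{m,n}}_g = \partial(\{g\},H_{m,n})$ consists of the two horizontal neighbors of $g$ together with whichever vertical neighbors of $g$ lie in the strip --- both if $m < g_2 < n$, exactly one if $g_2 \in \{m,n\}$ (here $m<n$ guarantees $g$ is not simultaneously in the top and bottom row). By the defining formula~\eqref{mntbdefn} applied to $S = \{g\}$, for any $\delta \in \A^{N^{H_{m,n}}_g}$ the distribution $\Lambda_{m,n,t,b}^{\delta}$ on $\A^{\{g\}}$ equals $\Lambda^{\xi}$, where $\xi \in \A^{N_g}$ restricts to $\delta$ on $N^{H_{m,n}}_g$ and, on any neighbor of $g$ lying outside the strip, takes the value prescribed by $t$ or $b$ --- a value not depending on $\delta$. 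Hence $q_g(\Lambda_{m,n,t,b})$ is the maximum of $d(\Lambda^{\xi},\Lambda^{\xi'})$ over a family of pairs $(\xi,\xi')$ contained in the family appearing in the definition of $q_g(\Lambda)$, so $q_g(\Lambda_{m,n,t,b}) \le q_g(\Lambda) \le q(\Lambda)$; taking the supremum over $g$ gives $q(\Lambda_{m,n,t,b}) \le q(\Lambda) < 1$.

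Next I would show $p_c(H_{m,n}) = 1$. Put $w = n-m+1$ and fix an arbitrary $q < 1$, and consider $P_{(1-q,q)}$ on $\{0,1\}^{V(H_{m,n})}$. Because the strip has bounded height, any infinite connected subgraph must contain sites of arbitrarily large first coordinate, or else sites of arbitrarily small first coordinate. In the first case, for every sufficiently large integer $j$ the cluster meets the column $C_j := \{j\}\times[m,n]$, i.e. the event $A_j$ that $C_j$ contains a $1$ occurs; but the events $A_j$ are independent (the columns $C_j$ are disjoint) with $P(A_j) = 1-(1-q)^w < 1$, so $P(A_j \text{ for all } j \ge N) = \prod_{j \ge N} P(A_j) = 0$ for each $N$, and summing over $N$ shows this scenario has probability $0$; the case of arbitrarily small first coordinate is symmetric. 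Thus $P_{(1-q,q)}$-a.s.\ there is no infinite all-$1$ cluster in $H_{m,n}$, and since $q<1$ was arbitrary, $p_c(H_{m,n}) = 1$.

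Combining the two observations, $\Lambda_{m,n,t,b}$ is a valid $H_{m,n}$-specification with $q(\Lambda_{m,n,t,b}) < 1 = p_c(H_{m,n})$, so Theorem~\ref{uniqueness}, applied with $G = H_{m,n}$, gives a unique associated $H_{m,n}$-MRF, as claimed. I do not expect a genuine obstacle here; the only points that need care are the precise shape of the boundary neighborhoods of top- and bottom-row sites of the strip (handled in the first step, and the reason $m<n$ is used) and the fact that Theorem~\ref{uniqueness} is stated for an arbitrary locally finite graph, which covers $H_{m,n}$.
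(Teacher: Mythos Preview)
Your proposal is correct and follows essentially the same route as the paper: first bound $q(\Lambda_{m,n,t,b}) \le q(\Lambda)$ by observing that the single-site specifications on the strip are a subfamily of those for $\Lambda$, then show $p_c(H_{m,n}) = 1$ via the independence of columns, and conclude by Theorem~\ref{uniqueness}. The only cosmetic difference is in the percolation step, where the paper argues that almost surely there exist entirely closed columns arbitrarily far in each direction (blocking any infinite cluster), while you argue the contrapositive, that an infinite cluster would force all sufficiently far columns to contain a $1$; these are the same Borel--Cantelli observation.
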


\begin{proof}
Consider any such $\Lambda$, $m$, $n$, $t$, and $b$ for which $\Lambda_{m,n,t,b}$ is a valid specification. For any $i \in V(H_{m,n}) = \zz \times [m,n]$, by the definition of $\Lambda_{m,n,t,b}$, $q_i(\Lambda_{m,n,t,b})$ and $q_i(\Lambda)$ are both maxima of $d(\Lambda^{\delta}, \Lambda^{\delta'})$ over sets of pairs $\delta,\delta' \in \A^{N_i}$. However, for $q_i(\Lambda)$, one maximizes over all such pairs, and for $q_i(\Lambda_{m,n,t,b})$, one may be maximizing over a smaller set. (For instance, if $i$ is part of the top row of $H_{m,n}$, then one only considers configurations on the neighbors where the neighbor above $i$ is equal to $t(i+(0,1))$) Therefore, $q_i(\Lambda_{m,n,t,b}) \leq q_i(\Lambda)$. Since $i$ was arbitrary, $q(\Lambda_{m,n,t,b}) \leq q(\Lambda)$.

It now suffices to show that $p_c(H_{m,n}) = 1$; the proposition then follows from Theorem~\ref{uniqueness}. For any $p < 1$, if sites of $H_{m,n}$ are independently taken to be open with
probability $p$ and closed with probability $1-p$, then the probability than an entire column $\{i\} \times [m,n]$ is closed is $(1-p)^{n-m+1} > 0$. This means that with $P_{(1-p,p)}$-probability $1$, there exist closed columns arbitrarily far to the left and right, meaning that there are no infinite open connected clusters. Therefore, since $p<1$ was arbitrary, $p_c(H_{m,n}) = 1$ and we are done.
\end{proof}

Clearly, when the hypotheses of Proposition~\ref{stripuniqueness} are satisfied, $\mu(\Lambda_{m,n,t,b})$ can be thought of as a measure on the one-dimensional full shift $(\A^{[m,n]})^{\zz}$, and we will interpret $\mu(\Lambda_{m,n,t,b})$ in this way for further discussions about $\dbar$ distance and entropy. It should always be clear from context which viewpoint is being used.

\begin{proposition}\label{weaklimit}
For any valid $\mathbb{Z}^2$-specification $\Lambda$ with $q(\Lambda) < p_c$ (with unique associated MRF $\mu = \mu(\Lambda)$) and any $t,b \in \A^{\zz}$ for which there exists $N$ such that $\mu(\Lambda_{-n,n,t,b})$ exists for all $n>N$, $\mu(\Lambda_{-n,n,t,b})$ approaches $\mu$ weakly as $n \rightarrow \infty$.
\end{proposition}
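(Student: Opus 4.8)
The plan is to prove directly that $\mu_n(w)\to\mu(w)$ for every configuration $w$ on every finite $S\subset\zz^2$ — which is exactly weak convergence of $\mu_n:=\mu(\Lambda_{-n,n,t,b})$ to $\mu$ — and in fact to obtain an \emph{exponential} rate. Since cylinder sets on rectangles generate the Borel $\sigma$-algebra, it suffices to treat $S=[-\ell,\ell]^2$; fix such an $S$ and a $w\in\A^S$, and fix $n>\max(\ell,N)$. I would then introduce an auxiliary parameter $k>\ell$ and work with the rectangle $R=[-k,k]\times[-n,n]\supset S$.

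The first step is two decompositions over $\partial R$. By ordinary total probability, $\mu(w)=\sum_{\gamma}\mu(\gamma)\,\mu^{\gamma}(w)$, the sum over configurations $\gamma$ on $\partial R$ of positive $\mu$-measure (where $\mu^{\gamma}(w)$ denotes the $\mu^{\gamma}$-measure of the set of configurations on $R$ whose restriction to $S$ is $w$). For $\mu_n$, note that $\partial(R,H_{-n,n})$ is just the two columns $\{-k-1,k+1\}\times[-n,n]$; since $\mu_n$ is the MRF associated to $\Lambda_{-n,n,t,b}$, conditioning on a configuration $\delta$ on those columns and using \eqref{mntbdefn} gives $\mu_n(w)=\sum_{\delta}\mu_n(\delta)\,\Lambda^{\xi(\delta)}(w)$, where $\xi(\delta)$ is the configuration on $\partial R$ obtained by adjoining $t|_{[-k,k]}$ along $[-k,k]\times\{n+1\}$ and $b|_{[-k,k]}$ along $[-k,k]\times\{-n-1\}$. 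The one point requiring care is that every such $\xi(\delta)$ (for $\delta$ with $\mu_n(\delta)>0$) has positive $\mu$-measure, so that $\Lambda^{\xi(\delta)}=\mu^{\xi(\delta)}$; I would extract this from the construction of $\mu_n$ in Proposition~\ref{stripvalidity} as a weak limit of conditional measures $\mu^{\delta_j}$ whose top and bottom rows are $t|_{[-j,j]}$ and $b|_{[-j,j]}$, by evaluating that limit on the clopen cylinder $[\delta]$ (for large $j$ the cylinder $[\delta]\cap[\delta_j]$ then has positive $\mu$-measure and its trace on $\mathrm{supp}(\xi(\delta))$ is $\xi(\delta)$). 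With this in hand, both $\mu_n(w)$ and $\mu(w)$ are convex combinations of numbers $\mu^{\gamma}(w)$ with $\gamma$ a positive-$\mu$-measure configuration on $\partial R$.

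The second step is to compare all of these to a single reference. Fix a configuration $\gamma^*$ on $\partial R$ with $\mu(\gamma^*)>0$. Theorem~\ref{vdBM}, applied to this $S$ and $R$, gives for each such $\gamma$ a coupling $\lambda\in C(\mu^{\gamma}|_S,\mu^{\gamma^*}|_S)$ with $\lambda(\{(x,y):x(s)\neq y(s)\})<Ke^{-Ld}$ for every $s\in S$, where $d$ is the distance from $S$ to the set of sites of $\partial R$ on which $\gamma$ and $\gamma^*$ disagree, so $d\geq\mathrm{dist}(S,\partial R)\geq\min(k,n)-\ell$. A union bound over the $(2\ell+1)^2$ sites of $S$ then gives $\lambda(\{x\neq y\})<(2\ell+1)^2Ke^{-L(\min(k,n)-\ell)}$, whence $|\mu^{\gamma}(w)-\mu^{\gamma^*}(w)|<(2\ell+1)^2Ke^{-L(\min(k,n)-\ell)}$. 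Averaging this bound against the coefficients of the two decompositions and using the triangle inequality gives $|\mu_n(w)-\mu(w)|<2(2\ell+1)^2Ke^{-L(\min(k,n)-\ell)}$.

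Finally, the left-hand side does not involve the auxiliary parameter $k$, so letting $k\to\infty$ yields $|\mu_n(w)-\mu(w)|\leq 2(2\ell+1)^2Ke^{-L(n-\ell)}$, which tends to $0$ as $n\to\infty$; this is the claimed weak convergence, with an exponential rate as a bonus. I expect the main obstacle to be the verification in the second paragraph that every boundary configuration charged by $\mu_n$ extends, via $t$ and $b$, to a configuration of positive $\mu$-measure — needed so that Theorem~\ref{vdBM}'s coupling is actually available — together with the routine check that the disagreement distance $d$ there is bounded below by $\min(k,n)-\ell$ uniformly over all the boundary configurations involved, which holds because $\partial R$ lies at lattice distance at least $\min(k,n)-\ell$ from $S$ even when $\gamma$ and $\gamma^*$ disagree everywhere on $\partial R$.
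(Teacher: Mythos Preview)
Your route is genuinely different from the paper's. The paper gives a two-line soft argument: extend each $\mu(\Lambda_{-n,n,t,b})$ to a measure on $\A^{\zz^2}$ (by filling unoccupied sites with a fixed letter), observe that any weak subsequential limit is a $\zz^2$-MRF associated to $\Lambda$ (since for large $n$ the strip specification agrees with $\Lambda$ on any fixed finite $S$), and conclude by uniqueness (Theorem~\ref{uniqueness}) that the only possible limit is $\mu$. No rates, no couplings, no boundary decompositions.

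Your direct quantitative argument via Theorem~\ref{vdBM} is more informative --- you get an exponential rate, which the paper's compactness argument does not --- and the coupling step and the $\min(k,n)-\ell$ distance bound are fine. The one genuine issue is the point you yourself flag: to compare $\mu_n^{\delta}$ with $\mu^{\gamma^*}$ via Theorem~\ref{vdBM} you need $\mu(\xi(\delta))>0$, and your justification of this invokes the \emph{construction} of $\mu_n$ as a weak limit of $\mu^{\delta_j}$ from Proposition~\ref{stripvalidity}. That construction is only available when $-n,n,t,b$ is \emph{compatible} with $\Lambda$ in the sense of Definition~\ref{compatible}; the hypothesis of Proposition~\ref{weaklimit} assumes only that $\mu(\Lambda_{-n,n,t,b})$ exists, which is weaker. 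If compatibility fails (so, say, $\mu$ assigns measure zero to every boundary of $[-k,k]\times[-n,n]$ with the prescribed top and bottom rows) then every $\xi(\delta)$ has $\mu$-measure zero, Theorem~\ref{vdBM} is unavailable, and your comparison collapses --- yet the proposition as stated still applies, and the paper's soft proof still goes through. In all later uses of Proposition~\ref{weaklimit} in the paper compatibility \emph{is} assumed, so your argument is perfectly adequate for the applications; but as a proof of the proposition exactly as stated it has this gap, which the compactness-plus-uniqueness argument avoids.
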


\begin{proof}
By definition of $\Lambda_{-n,n,t,b}$, any weak limit of a subsequence of $\mu(\Lambda_{-n,n,t,b})$ is clearly a $\mathbb{Z}^2$-MRF associated to $\Lambda$. (As before, we need to extend each $\mu(\Lambda_{-n,n,t,b})$ to a measure on all of $\A^{\zz^2}$; we do this by choosing any $a \in \A$ and extending each configuration in the support of $\mu(\Lambda_{-n,n,t,b})$ to all of $\zz^2$ by filling the unoccupied sites with $a$.) However, the only such MRF is $\mu$.
\end{proof}

We will now use Theorem~\ref{gen-vdBM} to derive couplings of marginalizations of $\mu(\Lambda_{1,n,t,b})$ and $\mu(\Lambda_{1,n+1,t,b})$ to substrips which imply their closeness in the $\dbar$ metric.

\begin{theorem}\label{dbarclose}
For any valid $\mathbb{Z}^2$-specification $\Lambda$ with $q(\Lambda) < p_c$ (with unique associated MRF $\mu = \mu(\Lambda)$), there exist $K, L > 0$ such that for any $n$ and any $t,b \in \A^{\mathbb{Z}}$ such that
$1,n,t,b$ and $1,n+1,t,b$ are compatible with $\Lambda$, and for any $1 \leq i < i' \leq n$,
\[
\dbar\big(\mu(\Lambda_{1,n,t,b})|_{\zz \times [i,i'-1]}, \mu(\Lambda_{1,n+1,t,b})|_{\zz \times [i,i'-1]}\big) \leq (i'-i)Ke^{-L(n-i')} \textrm{ and}
\]
\[
\dbar\big(\mu(\Lambda_{1,n,t,b})|_{\zz \times [i,i'-1]}, \mu(\Lambda_{1,n+1,t,b})|_{\zz \times [i+1,i']}\big) \leq (i'-i)Ke^{-Li}.
\]
\end{theorem}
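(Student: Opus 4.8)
The plan is to produce explicit couplings of the relevant one-dimensional processes by coupling their column-distributions site-by-site, and then bound the $\dbar$ distance using the fact that $\dbar$ is controlled by the expected per-coordinate disagreement of a coupling. Recall that both $\mu(\Lambda_{1,n,t,b})$ and $\mu(\Lambda_{1,n+1,t,b})$, viewed as measures on $\A^{\zz^2}$ (or rather on configurations supported on the respective strips), arise by restricting the specification $\Lambda$, with top/bottom boundary rows $t$ and $b$. The key point is that the marginalization of $\mu(\Lambda_{1,n,t,b})$ to a substrip $\zz\times[i,i'-1]$ can be obtained as a limit, as $k\to\infty$, of $\mu^{\delta_k}$-type measures on $\A^{S}$ with $S=[-k,k]\times[i,i'-1]$, where the conditioning is on a configuration on $\partial([-k,k]\times[1,n])$ (its top row being $t|_{[-k,k]}$, bottom row $b|_{[-k,k]}$, and the two side columns free). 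So, up to a weak limit, comparing the two marginalizations is exactly a special case of comparing $\mu^{\delta}|_S$ and $\mu^{\delta'}|_{S}$ for boundary configurations $\delta,\delta'$ on the boundaries of nested rectangles: here $R = [-k,k]\times[1,n]$ and $R' = [-k,k]\times[1,n+1]$, with $\delta$ agreeing with $\delta'$ on the common part of the two boundaries (the bottom row $\zz\times\{0\}$ carries $b$ in both, the side columns are the disputed region, and the "top" of $R$ is the row $\zz\times\{n+1\}$ which is interior to $R'$).

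First I would set up, for each large $k$, the rectangles $R_k = [-k,k]\times[1,n]$, $R_k' = [-k,k]\times[1,n+1]$, and the target set $S_k = [-k,k]\times[i,i'-1]$, and apply Theorem~\ref{gen-vdBM} to obtain $\lambda_k \in C(\mu^{\delta}|_{S_k}, \mu^{\delta'}|_{S_k})$ with $\lambda_k(\{x(s)\neq y(s)\}) < Ke^{-Ld}$ for every $s\in S_k$, where $d$ is the distance from $S_k$ to the set of boundary sites of $\partial R_k$ at which the two conditions effectively disagree (i.e., those not in $\partial R_k'$, which is precisely the row $\zz\times\{n+1\}$, together with any genuine disagreements, which there are none of on the shared bottom/side data once we have fixed matching $\delta,\delta'$). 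Since $S_k$ reaches up only to height $i'-1$ and the disputed row is at height $n+1$, this distance is $n+1-(i'-1) = n-i'+2 \geq n-i'$, giving the per-site bound $Ke^{-L(n-i')}$. Then I would push the $\lambda_k$ to couplings of the finite-window marginals on $[-m,m]\times[i,i'-1]$ (a set of cardinality $(2m+1)(i'-i)$), extract a weakly convergent subsequence to get a coupling $\lambda$ of the true substrip marginalizations (using Proposition~\ref{weaklimit}-style arguments that the limits are the correct measures), and observe that the expected normalized Hamming distance per column, summed over the $i'-i$ rows of a column, is at most $(i'-i)Ke^{-L(n-i')}$; the $\dbar$ bound follows since $\dbar$ of the column-processes is bounded by the limiting expected per-column disagreement of any coupling. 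The second inequality is entirely symmetric: there one uses instead that $\mu(\Lambda_{1,n+1,t,b})|_{\zz\times[i+1,i']}$ is, via the vertical translation by one, the "same kind" of object, and the disputed region is now the \emph{bottom} boundary row of the $n$-strip at height $0$, which is at distance $i - 0 = i$ from $S$, yielding $Ke^{-Li}$.

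The main obstacle I expect is the careful bookkeeping needed to realize the substrip marginalization of an $H_{m,n}$-MRF as a genuine limit of conditional measures $\mu^{\delta_k}$ of the \emph{ambient} $\zz^2$-MRF $\mu$ conditioned on three-sided-plus-filler boundary data, so that Theorem~\ref{gen-vdBM} (which is stated for conditioning the ambient $\mu$ on rectangle boundaries) actually applies. One must check that the boundary configuration $\xi$ appearing in the definition~\eqref{mntbdefn} of $\Lambda_{m,n,t,b}$ matches up, in the $k\to\infty$ limit, with conditioning $\mu$ on $\partial R_k$ with top row $t|_{[-k,k]}$ and bottom row $b|_{[-k,k]}$ — this uses that $\mu$ is associated to $\Lambda$ and the compatibility hypothesis (so such $\delta_k$ with positive measure exist), together with the weak-limit argument already used in the proof of Proposition~\ref{stripvalidity}. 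Once that identification is in place, the two inequalities are immediate consequences of Theorem~\ref{gen-vdBM}, the triangle-inequality-free additivity of expected disagreements over the $i'-i$ sites in a column, and the definition of $\dbar$ together with the standard fact that it is bounded above by the limiting average per-coordinate disagreement of any sequence of finite-window couplings.
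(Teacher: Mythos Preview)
Your approach is essentially the paper's: apply Theorem~\ref{gen-vdBM} to finite rectangles $R_k=[-k,k]\times[1,n]$ and $R'_k=[-k,k]\times[1,n+1]$ with boundary data $\delta_k,\delta'_k$ having prescribed top and bottom rows, pass to weak limits to identify the limits with $\mu(\Lambda_{1,n,t,b})$ and $\mu(\Lambda_{1,n+1,t,b})$ (using uniqueness from Proposition~\ref{stripuniqueness}), and read off the $\dbar$ bound from the per-site disagreement estimate.

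One point needs correction. You assert that the side columns of $\delta_k$ and $\delta'_k$ can be taken to match, so that the only ``disputed'' sites in $\partial R_k$ are those in the row at height $n+1$. The compatibility hypothesis only guarantees, for each strip height separately, the existence of \emph{some} positive-measure boundary configuration with the prescribed top and bottom; it gives no control over the side columns, and there is no reason the side data for the height-$n$ and height-$(n+1)$ boundaries can be chosen compatibly. The paper handles this not by matching sides but by restricting the target set to $S_{i,i',j}=[-j,j]\times[i,i'-1]$ with $j<k-(n-i')$, so that the horizontal distance from $S_{i,i',j}$ to the side columns already exceeds $n-i'$; then the minimum distance $d$ in Theorem~\ref{gen-vdBM} is at least $n-i'$ regardless of what happens on the sides. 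Your later remark about pushing to finite windows $[-m,m]\times[i,i'-1]$ is exactly the right move, but it is needed for this reason, not merely for extracting a weak limit. For the second inequality the paper makes the shift explicit by placing $\delta_k$ on $\partial([-k,k]\times[2,n+1])$ so that the top rows coincide; your translation description amounts to the same thing.
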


\begin{proof}
We begin with the first inequality. For $\Lambda$ as in the theorem, take the $K$ and $L$ guaranteed by Theorem~\ref{gen-vdBM}. Fix $t$, $b$, $n$, and $i<i'$. For every sufficiently large $k$, take $\delta_k$ and $\delta'_k$ configurations on $\partial([-k,k] \times [1,n])$ and $\partial([-k,k] \times [1,n+1])$ respectively with positive $\mu$-measure which are both equal to $t|_{[-k,k]}$ on the top and $b|_{[-k,k]}$ on the bottom. Define $S_{i,i',k} = [-k,k] \times [i,i'-1]$. By Theorem~\ref{gen-vdBM}, for any $k > n-i'$ and $j < k - (n-i')$, there exists $\lambda_{i,i',j,k} \in C(\mu^{\delta_k}|_{S_{i,i',j}}, \mu^{\delta'_k}|_{S_{i,i',j}})$ for which the $\lambda_{i,i',j,k}$-probability of disagreement at any site in $S_{i,i',j}$ is less than $Ke^{-L(n-i')}$. (This is because $\delta_k$ and $\delta'_k$ agree on their bottom row, and the distance from any site in $S_{i,i',j}$ to any other site in $\partial([-k,k] \times [1,n]) \cup \partial([-k,k] \times [1,n+1])$ is at least $n-i'$.)

Now, for any fixed $i < i'$ and $j$, define $\lambda_{i,i',j}$ to be any weak limit of a subsequence of the couplings $\lambda_{i,i',j,k}$ as $k \rightarrow \infty$. Note that any weak subsequence of $\mu^{\delta_k}$ approaches an MRF on $H_{1,n,t,b}$ associated to the specification $\Lambda_{1,n,t,b}$, and since $\mu(\Lambda_{1,n,t,b})$ is the unique such MRF, the sequence $\mu^{\delta_k}$ itself must weakly approach $\mu(\Lambda_{1,n,t,b})$. Similarly, $\mu^{\delta'_k}$ weakly approaches $\mu(\Lambda_{1,n+1,t,b})$. Therefore, $\lambda_{i,i',j} \in C(\mu(\Lambda_{1,n,t,b})|_{S_{i,i',j}}, \mu(\Lambda_{1,n+1,t,b})|_{S_{i,i',j}})$. Also, it is clear that if we think of a configuration on $S_{i,i',j}$ as a $(2j+1)$-letter word on the alphabet of words on columns of height $i'-i$, then $E_{\lambda_{i,i',j}}(d_{2j+1}(x|_{[-j,j]}, y|_{[-j,j]})) < (i'-i)Ke^{-L(n-i')}$. Recalling the definition of $\dbar$, we see that then clearly $\dbar(\mu(\Lambda_{1,n,t,b})|_{\zz \times [i,i'-1]}, \mu(\Lambda_{1,n+1,t,b})|_{\zz \times [i,i'-1]}) \leq (i'-i)Ke^{-L(n-i')}$.

To prove the second inequality, change the proof above by defining $\delta_k$ on \newline $\partial([-k,k] \times [2,n+1])$ instead. Then $\delta_k$ and $\delta'_k$ will agree on their top rows, and the rest of the proof goes through mostly unchanged. The distances from $\zz \times [i,i'-1]$ and $\zz \times [i+1,i']$ to the bottom rows of $H_{1,n}$ and $H_{1,n+1}$ respectively are at least $i$, which is why $n-i'$ is replaced by $i$.
\end{proof}

We will for now restrict our attention to translation-invariant $\mathbb{Z}^2$-specifications $\Lambda$ with $q(\Lambda) < p_c$
and constant boundary conditions $t,b \in \A^{\mathbb{Z}}$, which means that the measures $\mu(\Lambda_{m,n,t,b})$ (when they exist)
 will be translation-invariant
as one-dimensional measures. (Otherwise, their horizontal shifts
would also be MRFs associated to $\Lambda_{m,n,t,b}$, contradicting
uniqueness of $\mu(\Lambda_{m,n,t,b})$.) We can then discuss the
measure-theoretic entropies $h(\mu(\Lambda_{1,n,t,b}))$ and
$h(\mu(\Lambda_{1,n+1,t,b}))$.

We will decompose these into conditional measure-theoretic
entropies, and then use Theorem~\ref{dbarclose}
and H\"{o}lder continuity of entropy (with respect to $\dbar$)
to show that many of these entropies are exponentially close,
finally showing that $h(\mu(\Lambda_{1,n+1,t,b})) -
h(\mu(\Lambda_{1,n,t,b}))$ is exponentially close to $h(\mu)$. We
first need some notation for special conditional measure-theoretic
entropies.

For any $a$, define $R_a = \zz \times \{a\}$. For any $m < n$ and any interval $B \subseteq [m,n]$, we partition $\A^{[m,n]}$ by the letters appearing on $B$, and call this partition $\xi_{B}$. Then, for any translation-invariant measure $\mu$ on $(\A^{[m,n]})^{\zz}$ and disjoint adjacent intervals $B,C \subseteq [m,n]$, we make the notations

\[
h_{\mu}\Big(\bigcup_{b \in B} R_b\Big) := h(\phi_{\xi_B}(\mu)) \textrm{ and}
\]
\[
h_{\mu}\Big(\bigcup_{c \in C} R_c \ | \ \bigcup_{b \in B} R_b\Big) := h(\phi_{\xi_{B \cup C}}(\mu) \ | \ \xi_B).
\]

Also, for any translation-invariant measure $\mu$ on $\A^{\zz^2}$, $h_{\mu}\Big(\bigcup_{b \in B} R_b\Big)$ will be understood to mean $h_{\mu|_{\left(\bigcup_{d \in D} R_d\right)}}\Big(\bigcup_{b \in B} R_b\Big)$ for any $D \supseteq B$; in other words, this expression is given meaning by marginalizing $\mu$ to any substrip containing the rows whose entropy is to be computed. It does not matter which $D$ is used, since clearly this quantity depends only on the restriction of $\mu$ to $\bigcup_{b \in B} R_b$. We will interpret $h_{\mu}\Big(\bigcup_{c \in C} R_c \ | \ \bigcup_{b \in B} R_b\Big)$ in an analogous fashion.

The following is just a consequence of Proposition~\ref{P2} for this new notation.
\begin{proposition}\label{P3}
For any $m<n$, $\mu$ a translation-invariant measure on $(\A^{[m,n]})^{\zz}$ or $\A^{\zz^2}$, and $B,C$ adjacent subintervals of $[m,n]$,
\[
h_{\mu}\Big(\bigcup_{a \in B \cup C} R_a\Big) = h_{\mu}\Big(\bigcup_{b \in B} R_b\Big) + h_{\mu}\Big(\bigcup_{c \in C} R_c \ | \ \bigcup_{b \in B} R_b\Big).
\]
\end{proposition}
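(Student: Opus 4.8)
The plan is to deduce this directly from Proposition~\ref{P2}; the only work is to unwind the notation introduced just above. First I would reduce to the case that $\mu$ is a translation-invariant measure on $(\A^{[m,n]})^{\zz}$: when $\mu$ is a measure on $\A^{\zz^2}$, all three quantities appearing in the statement were, by definition, computed by marginalizing $\mu$ to a substrip $\bigcup_{d\in D}R_d$ with $D\supseteq B\cup C$, so we may replace $\mu$ by $\mu|_{\left(\bigcup_{d\in D}R_d\right)}$ and relabel. With this reduction in hand, unwinding the definitions of $h_{\mu}(\,\cdot\,)$ and $h_{\mu}(\,\cdot\ |\ \cdot\,)$ shows that the claimed identity is exactly
\[
h\big(\phi_{\xi_{B\cup C}}(\mu)\big)=h\big(\phi_{\xi_{B}}(\mu)\big)+h\big(\phi_{\xi_{B\cup C}}(\mu)\ |\ \xi_{B}\big).
\]

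Next I would make precise how the partition $\xi_B$ of $\A^{[m,n]}$ is being used to condition the measure $\phi_{\xi_{B\cup C}}(\mu)$, which lives on $(\xi_{B\cup C})^{\zz}$. Since $B\subseteq B\cup C$, the partition $\xi_{B\cup C}$ refines $\xi_B$, so every block of $\xi_{B\cup C}$ is contained in a unique block of $\xi_B$; declaring two blocks of $\xi_{B\cup C}$ equivalent when they lie in the same block of $\xi_B$ defines a partition $\eta$ of the (finite) alphabet $\xi_{B\cup C}$, and by construction $\phi_{\eta}\circ\phi_{\xi_{B\cup C}}=\phi_{\xi_{B}}$ as maps on $(\A^{[m,n]})^{\zz}$. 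In particular $\phi_{\eta}(\phi_{\xi_{B\cup C}}(\mu))=\phi_{\xi_{B}}(\mu)$, and the conditional entropy $h(\phi_{\xi_{B\cup C}}(\mu)\ |\ \xi_{B})$ is by definition $h(\phi_{\xi_{B\cup C}}(\mu)\ |\ \eta)$. Each map $\phi_{\xi}$ commutes with the shift, so it carries translation-invariant measures to translation-invariant measures; thus $\phi_{\xi_{B\cup C}}(\mu)$ is a translation-invariant measure on the finite alphabet $\xi_{B\cup C}$, and $\eta$ is a partition of that alphabet.

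Finally I would apply Proposition~\ref{P2} with $\phi_{\xi_{B\cup C}}(\mu)$ in place of $\mu$ and $\eta$ in place of $\xi$:
\[
h\big(\phi_{\xi_{B\cup C}}(\mu)\ |\ \eta\big)=h\big(\phi_{\xi_{B\cup C}}(\mu)\big)-h\big(\phi_{\eta}(\phi_{\xi_{B\cup C}}(\mu))\big)=h\big(\phi_{\xi_{B\cup C}}(\mu)\big)-h\big(\phi_{\xi_{B}}(\mu)\big),
\]
which rearranges to the displayed identity. There is no real obstacle here: the content is entirely in Proposition~\ref{P2}, and the only things to verify are the compatibility $\phi_{\eta}\circ\phi_{\xi_{B\cup C}}=\phi_{\xi_{B}}$ and the translation-invariance of the pushforward measures, so that Proposition~\ref{P2} legitimately applies.
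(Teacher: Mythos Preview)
Your proposal is correct and is essentially the same as the paper's approach: the paper does not give a proof, simply stating that the proposition ``is just a consequence of Proposition~\ref{P2} for this new notation.'' Your write-up faithfully fills in that omitted verification, including the (mild) notational point that conditioning $\phi_{\xi_{B\cup C}}(\mu)$ on $\xi_B$ really means conditioning on the induced partition $\eta$ of the alphabet $\xi_{B\cup C}$.
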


The following theorem can both be thought of as an extension of the Markov property in one dimension and a generalization of Theorem 13 from \cite{Pa}.

\begin{theorem}\label{MRFstripentropy}
Let $\Lambda$ be a
translation-invariant $\mathbb{Z}^2$-specification. Let
$n$ be a positive integer, $t,b \in \A^{\mathbb{Z}}$ be constant sequences and $\mu$ be an MRF associated to  $\Lambda_{1,n,t,b}$. Then for any integers $i$ and $k$ with $1 \leq k < i \leq n$,
$$
h_\mu\left(R_i \ | \ \bigcup_{j=k}^{i-1} R_j\right) = h_\mu(R_i \ | \ R_{i-1}).
$$
\end{theorem}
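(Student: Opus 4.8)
The plan is to reduce the statement to a comparison of finite Shannon entropies on growing horizontal windows, in which the normalizing factor $1/(2N+1)$ will absorb a bounded error supplied by the MRF property. For a finite $T\subset\zz\times[1,n]$ write $\mathsf{H}(T)=-\sum_w\mu(w)\log\mu(w)$ for the Shannon entropy of the $\mu$-law of the configuration on $T$, and $\mathsf{H}(T\mid T')=\mathsf{H}(T\cup T')-\mathsf{H}(T')$. By Proposition~\ref{P3} (with $B=[k,i-1],C=\{i\}$, and with $B=\{i-1\},C=\{i\}$) I would rewrite the two sides as $h_\mu(\bigcup_{j=k}^{i}R_j)-h_\mu(\bigcup_{j=k}^{i-1}R_j)$ and $h_\mu(R_{i-1}\cup R_i)-h_\mu(R_{i-1})$; and since stationarity and subadditivity give $h_\mu(\bigcup_{j=a}^{b}R_j)=\lim_{N\to\infty}\frac{1}{2N+1}\mathsf{H}([-N,N]\times[a,b])$, the theorem becomes the claim that $B_N-A_N$ stays bounded as $N\to\infty$, where $A_N=\mathsf{H}([-N,N]\times\{i\}\mid[-N,N]\times[k,i-1])$ and $B_N=\mathsf{H}([-N,N]\times\{i\}\mid[-N,N]\times\{i-1\})$. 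Assuming $k\le i-2$ (otherwise $A_N=B_N$ and there is nothing to prove) and writing $\Pi=[-N,N]\times[k,i-2]$, $M=[-N,N]\times\{i-1\}$, $\Gamma=[-N,N]\times\{i\}$, a one-line manipulation of Shannon entropies gives $B_N-A_N=\mathsf{H}(\Pi\mid M)-\mathsf{H}(\Pi\mid M\cup\Gamma)\ge 0$, so all that is needed is a uniform upper bound.

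The crucial step, and the only place the MRF hypothesis is used, is to feed the right set into the conditional-independence property of the $H_{1,n}$-MRF $\mu$: not the flat row $\Gamma$, but the tall rectangle $S=[-N,N]\times[i,n]$ that runs up to the top row of the strip. Then $\partial(S,H_{1,n})=M\cup\Sigma$, where $\Sigma=\{-N-1,N+1\}\times[i,n]$ has only $2(n-i+1)$ sites, there being no upper face because row $n+1$ is not a vertex of $H_{1,n}$. Since $\Pi\subset V(H_{1,n})\setminus(S\cup\partial(S,H_{1,n}))$, the MRF property makes the configuration on $S$ conditionally independent of the configuration on $\Pi$ given the configuration on $M\cup\Sigma$, i.e.\ $\mathsf{H}(\Pi\mid M\cup\Sigma\cup S)=\mathsf{H}(\Pi\mid M\cup\Sigma)$. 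Two applications of ``conditioning reduces entropy'' (adjoining $\Sigma$, then $S\supset\Gamma$) give $\mathsf{H}(\Pi\mid M\cup\Gamma)\ge\mathsf{H}(\Pi\mid M\cup\Sigma)$, hence
\[
B_N-A_N\ \le\ \mathsf{H}(\Pi\mid M)-\mathsf{H}(\Pi\mid M\cup\Sigma)\ =\ \mathsf{H}(\Sigma\mid M)-\mathsf{H}(\Sigma\mid\Pi\cup M)\ \le\ \mathsf{H}(\Sigma)\ \le\ 2(n-i+1)\log|\A|,
\]
uniformly in $N$. Dividing by $2N+1$ and letting $N\to\infty$ then yields $h_\mu(R_i\mid\bigcup_{j=k}^{i-1}R_j)=h_\mu(R_i\mid R_{i-1})$.

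The step I expect to be the real content — everything else is bookkeeping with Propositions~\ref{P2} and~\ref{P3} and monotonicity of conditional entropy — is this localization of the discrepancy to a set of size $O(1)$. Applying the MRF property to the obvious set $[-N,N]\times\{i\}$ is useless when $i<n$, because its $H_{1,n}$-boundary then also contains all of $[-N,N]\times\{i+1\}$, an error of order $N$ that is not killed by the normalization; enlarging $S$ upward all the way to row $n$ removes that entire face of the boundary, and it is precisely the \emph{finite} height of the strip that keeps the two surviving vertical sides $\Sigma$ bounded independently of $N$. (The MRF property of $\mu$ and its translation-invariance — the latter implicit in the $h_\mu$ notation and guaranteed here by $\Lambda$ translation-invariant and $t,b$ constant — are the only properties of $\mu$ used for this step; in particular no uniqueness or percolation hypothesis on $\Lambda$ is invoked.)
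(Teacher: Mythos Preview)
Your proof is correct, and the core insight---using the finite height of the strip so that the MRF boundary of a suitably chosen rectangle has only $O(1)$ sites independent of the window width $N$---is exactly the paper's. The packaging differs in two minor but interesting ways. First, direction: the paper applies the MRF property to the set $[-k,k]\times[1,i-2]$ reaching \emph{down} to the bottom of the strip (so the lower face of the boundary vanishes), introducing auxiliary side columns $L,R$ of height $i-1$ and getting the bound $4(i-1)\log|\A|$; you instead push the set \emph{up} to row $n$, with side columns $\Sigma$ of height $n-i+1$ and bound $2(n-i+1)\log|\A|$. Either direction works for the same reason. Second, mechanics: the paper introduces modified sums $S_k^*,T_k^*$ (conditioning additionally on $L,R$), shows $S_k^*=T_k^*$ \emph{exactly} via the MRF identity, and then bounds $|S_k-S_k^*|$ and $|T_k-T_k^*|$ separately by a crude $\alpha\log\alpha$ estimate; you bypass the auxiliary sums and go straight through the mutual-information identity $I(\Pi;\Gamma\mid M)=I(\Gamma;\Pi\mid M)\le I(\Sigma;\Pi\mid M)\le \mathsf{H}(\Sigma)$, which is tidier and gives a slightly sharper constant. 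Neither difference is substantive---the content is the same localization trick you identify in your final paragraph.
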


\begin{proof}
We will prove the theorem for $k = 1$, and this suffices to prove the theorem for all $k < i$, since for any $1 \leq k < i$, conditioning on $\xi_{[k,i-1]}$ is an intermediate partition between $\xi_{[1,i-1]}$ and $\xi_{\{i-1\}}$. If the conditional entropies resulting from these two partitions are equal, then clearly any intermediate partition gives the same value. Fix any $\Lambda$, $n$, $t$, $b$, and $i$ as in the statement of the theorem. For simplicity, we write $\mu = \mu(\Lambda_{1,n,t,b})$. We can write
\[
h_{\mu}\Big(R_i \ | \ \bigcup_{j=1}^{i-1} R_j\Big) = \lim_{k \rightarrow \infty} (1/k) S_k, \textrm{ where }
\]
\[
S_k := \sum_{\substack{w \in \A^{[-k,k] \times [1,i-1]},\\ v \in \A^{[-k,k] \times \{i\}}}} \mu(w \cap v) \log \Big(\frac{\mu(w)}{\mu(w \cap v)}\Big)
\]
\begin{equation}\label{eq1}
= \bigg(\sum_{w \in \A^{[-k,k] \times [1,i-1]}} \mu(w) \log \mu(w)\bigg)
\end{equation}
\begin{equation}\label{eq2}
- \bigg(\sum_{\substack{w \in \A^{[-k,k] \times [1,i-1]},\\ v \in \A^{[-k,k] \times \{i\}}}} \mu(w \cap v) \log \mu(w \cap v)\bigg).
\end{equation}
(As in the definition of measure-theoretic entropy, in each sum we omit terms coming from configurations of $\mu$-measure zero.) We also define
\[
S^*_k := \sum_{\substack{w \in \A^{[-k,k] \times [1,i-1]},\\ v \in \A^{[-k,k] \times \{i\}},\\ L \in \A^{\{-k-1\} \times [1,i-1]},\\ R \in \A^{\{k+1\} \times [1,i-1]}}} \mu(w \cap v \cap L \cap R) \log\Big(\frac{\mu(w \cap L \cap R)}{\mu(w \cap v \cap L \cap R)}\Big) =
\]
\begin{equation}\label{eq3}
\bigg(\sum_{\substack{w \in \A^{[-k,k] \times [1,i-1]},\\ L \in \A^{\{-k-1\} \times [1,i-1]},\\ R \in \A^{\{k+1\} \times [1,i-1]}}} \mu(w \cap L \cap R) \log\mu(w \cap L \cap R)\bigg) -
\end{equation}
\begin{equation}\label{eq4}
\bigg(\sum_{\substack{w \in \A^{[-k,k] \times [1,i-1]},\\ v \in \A^{[-k,k] \times \{i\}},\\ L \in \A^{\{-k-1\} \times [1,i-1]},\\ R \in \A^{\{k+1\} \times [1,i-1]}}} \mu(w \cap v \cap L \cap R) \log\mu(w \cap v \cap L \cap R)\bigg).
\end{equation}

We claim that $|S_k - S^*_k| \leq 4(i-1) \log |\A|$. To see this, we first compare (\ref{eq1}) and (\ref{eq3}). For any fixed $w$, consider the term $\mu(w) \log \mu(w)$ from (\ref{eq1}). Compare this to the corresponding terms from (\ref{eq3}), i.e. $\sum_{L,R} \mu(w \cap L \cap R) \log \mu(w \cap L \cap R)$. We make the simple observation that for any set of $k$ nonnegative reals $\{\alpha_i\}_{i=1}^{k}$ summing to $\alpha$, $-\sum_{i=1}^k \alpha_i \log(\alpha_i)$ is at least $-\alpha \log \alpha$, and at most $-\alpha \log (\frac{\alpha}{k})$ (achieved when all $\alpha_i$ are equal). Therefore,
\[
\Big|\mu(w) \log \mu(w) - \sum_{L,R} \mu(w \cap L \cap R) \log \mu(w \cap L \cap R)\Big| \leq \mu(w) \log(|\A|^{2(i-1)})
\]

\noindent
since the number of different pairs $L,R$ is at most $|\A|^{2(i-1)}$. By summing this over all choices of $w$ in (\ref{eq1}) and (\ref{eq3}), we see that the difference between (\ref{eq1}) and (\ref{eq3}) has absolute value at most $\sum_{w} \mu(w) \log(|\A|^{2(i-1)}) = 2(i-1) \log |\A|$. An analogous argument may be made for (\ref{eq2}) and (\ref{eq4}), and so $|S_k - S^*_k| \leq 4(i-1) \log |\A|$ for all $k$.

We can similarly write $h_{\mu}(R_i \ | \ R_{i-1}) = \lim_{k \rightarrow \infty} (1/k) T_k$, where $T_k$ is defined exactly as $S_k$, but where $w \in \A^{[-k,k] \times [1,i-1]}$ is replaced in the summation by $w' \in \A^{[-k,k] \times \{i-1\}}$. If we define $T^*_k$ exactly as $S^*_k$, again using $w'$ instead of $w$, then a trivially similar proof to the above shows that $|T_k - T^*_k| \leq 4(i-1) \log |\A|$ for all $k$. If we can now prove that $S^*_k = T^*_k$ for all $k$, then $|S_k - T_k| \leq |S_k - S^*_k| + |T_k - T^*_k| + |S^*_k - T^*_k| \leq 8(i-1) \log |\A|$ for every $k$, which clearly shows that $h_{\mu}\Big(R_i \ | \ \bigcup_{j=1}^{i-1} R_j\Big) = \lim_{k \rightarrow \infty} (1/k) S_k$ and $h_{\mu}(R_i \ | \ R_{i-1}) = \lim_{k \rightarrow \infty} (1/k) T_k$ are equal.

We claim that for any $L \in \A^{\{-k-1\} \times [1,i-1]}$, $R \in \A^{\{k+1\} \times [1,i-1]}$, $v \in \A^{[-k,k] \times \{i\}}$, and $w \in \A^{[-k,k] \times [1,i-1]}$, if we define $w' = w|_{[-k,k] \times \{i-1\}}$, then
\[
\frac{\mu(w \cap L \cap R)}{\mu(w \cap v \cap L \cap R)} = \frac{\mu(w' \cap L \cap R)}{\mu(w' \cap v \cap L \cap R)}.
\]

To see this, we define $w'' = w|_{[-k,k] \times [1,i-2]}$ and note that since $\mu$ is an $H_{m,n}$-MRF,
\[
\frac{\mu(w'' \cap w' \cap L \cap R)}{\mu(w' \cap L \cap R)} = \frac{\mu(w'' \cap w' \cap v \cap L \cap R)}{\mu(w' \cap v \cap L \cap R)}.
\]

From this, it is clear that
\[
\frac{\mu(w' \cap L \cap R)}{\mu(w' \cap v \cap L \cap R)} = \frac{\mu(w'' \cap w' \cap L \cap R)}{\mu(w'' \cap w' \cap v \cap L \cap R)},
\]
which is clearly equal to $\frac{\mu(w \cap L \cap R)}{\mu(w \cap v \cap L \cap R)}$. But then for any fixed $w'$, all terms in $S^*_k$ corresponding to $w$ with the top row $w'$ can be collapsed, which quickly yields $S^*_k = T^*_k$, completing the proof.

\end{proof}

\begin{corollary}\label{MRFentropy}
If $\Lambda$ is a translation-invariant $\mathbb{Z}^2$-specification
with $q(\Lambda) < p_c$ (with unique associated MRF $\mu = \mu(\Lambda)$),
and $t$ and $b$ are constant sequences
such that  $-n,n,t,b$ is compatible with $\Lambda$ for sufficiently
large $n$, and $k$ is any positive integer, then
$$
h_{\mu}\left(R_0 \ | \ \bigcup_{j=-k}^{-1} R_j\right) = h_{\mu}(R_0 \ | \ R_{-1}).
$$
\end{corollary}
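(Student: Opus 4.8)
The plan is to obtain the identity for $\mu$ as a limit of the corresponding identities on symmetric strips, which are supplied by Theorem~\ref{MRFstripentropy}. Write $\mu_n := \mu(\Lambda_{-n,n,t,b})$. Since $q(\Lambda) < p_c < 1$, Proposition~\ref{stripuniqueness} guarantees that $\mu_n$ is a well-defined (unique associated) $H_{-n,n}$-MRF for every $n$ with $-n,n,t,b$ compatible with $\Lambda$, hence for all sufficiently large $n$; because $t,b$ are constant, $\mu_n$ is translation-invariant as a one-dimensional measure on $(\A^{[-n,n]})^{\zz}$, and $\mu = \mu(\Lambda)$ is itself translation-invariant by Theorem~\ref{uniqueness}. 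Conjugating by the vertical translation $v \mapsto v+(0,n+1)$ --- which, since $\Lambda$ is translation-invariant and $t,b$ are constant, carries $\Lambda_{-n,n,t,b}$ to $\Lambda_{1,2n+1,t,b}$ and row $a$ to row $a+n+1$ --- I would apply Theorem~\ref{MRFstripentropy} to $\mu(\Lambda_{1,2n+1,t,b})$ with $i = n+1$ and lower index $n+1-k$ (valid once $1 \le n+1-k < n+1 \le 2n+1$, i.e.\ $n \ge k$), then translate back, using that each quantity $h_{\mu_n}(\,\cdot\,)$ depends only on the restriction of $\mu_n$ to the rows involved and is equivariant under vertical translation of those rows. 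This yields, for all sufficiently large $n$ with $n \ge k$,
\[
h_{\mu_n}\Big(R_0 \,\Big|\, \bigcup_{j=-k}^{-1} R_j\Big) = h_{\mu_n}\big(R_0 \,\big|\, R_{-1}\big).
\]

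To pass to the limit, I would show that for each fixed finite interval $B \subset \zz$ the one-dimensional measures $\mu_n|_{\zz\times B}$ on $(\A^{B})^{\zz}$ converge to $\mu|_{\zz\times B}$ in the $\dbar$ metric as $n \to \infty$. Proposition~\ref{weaklimit} already gives weak convergence $\mu_n \to \mu$, but this is not enough, since measure-theoretic entropy is only upper semicontinuous (and not continuous) in the weak topology --- this is exactly where the hypothesis $q(\Lambda) < p_c$ does its work. Using the exponential decay of boundary influence (Theorem~\ref{dbarclose}, re-indexed by the same vertical translation as above, applied to compare consecutive strips $\mu(\Lambda_{1,m,t,b})$ and $\mu(\Lambda_{1,m+1,t,b})$ with the rows of $B$ sitting a distance of order $n$ from the relevant boundary rows), one gets $\dbar\big(\mu_n|_{\zz\times B},\, \mu_{n+1}|_{\zz\times B}\big) \le C\,|B|\,e^{-Ln}$ for suitable $C,L > 0$. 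Hence $(\mu_n|_{\zz\times B})_n$ is $\dbar$-Cauchy; its $\dbar$-limit is also a weak limit, so by Proposition~\ref{weaklimit} it must be $\mu|_{\zz\times B}$.

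Finally, by Proposition~\ref{P3} each side of the strip identity above is a difference of two entropies of the form $h_{\mu_n}\big(\bigcup_{j\in B}R_j\big) = h\big(\mu_n|_{\zz\times B}\big)$, with $B$ ranging over the fixed intervals $[-k,0]$, $[-k,-1]$, $[-1,0]$, $\{-1\}$; the same holds for $\mu$. By the $\dbar$-convergence just established together with the H\"older continuity of the entropy map with respect to $\dbar$ recorded in Section~\ref{defns} (with alphabet $\A^{B}$), each such entropy converges, $h_{\mu_n}\big(\bigcup_{j\in B}R_j\big)\to h_{\mu}\big(\bigcup_{j\in B}R_j\big)$. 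Letting $n \to \infty$ in the strip identity and re-assembling the conditional entropies for $\mu$ via Proposition~\ref{P3} yields $h_{\mu}\big(R_0 \mid \bigcup_{j=-k}^{-1}R_j\big) = h_{\mu}(R_0 \mid R_{-1})$.

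The main obstacle is the second step: the strip identity itself is essentially just the one-dimensional Markov property delivered by Theorem~\ref{MRFstripentropy}, and the real difficulty is that entropy is discontinuous in the weak topology, so the weak convergence of Proposition~\ref{weaklimit} cannot be used directly --- one genuinely needs the percolation-theoretic decay estimates encapsulated in Theorem~\ref{gen-vdBM} to promote weak convergence of the strip measures to $\dbar$-convergence. A minor bookkeeping point in that step is the compatibility of the intermediate (even-height) strips used to run the telescoping estimate; this is automatic in the cases of interest, and in any case it is harmless, since one only needs $\dbar$-convergence of $\mu_n|_{\zz\times B}$ along a cofinal set of $n$ for the limit argument to go through.
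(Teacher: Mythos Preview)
Your proposal is correct and follows essentially the same route as the paper: apply Theorem~\ref{MRFstripentropy} to the symmetric strip measures $\mu(\Lambda_{-n,n,t,b})$, use Theorem~\ref{dbarclose} to get $\dbar$-Cauchyness of their marginalizations to a fixed band of rows, identify the $\dbar$-limit as the corresponding marginalization of $\mu$ via Proposition~\ref{weaklimit}, and then pass to the limit using H\"older continuity of entropy with respect to $\dbar$. Your write-up is in fact more explicit than the paper's in two places --- the vertical-translation reduction to the $[1,2n+1]$ indexing required by Theorem~\ref{MRFstripentropy}, and the decomposition of the conditional entropies as differences via Proposition~\ref{P3} before taking limits --- and you correctly flag the bookkeeping issue with the intermediate (even-height) strip needed to telescope from $[-n,n]$ to $[-(n+1),n+1]$, which the paper passes over silently.
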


\begin{proof}
Fix any such $\mu$, $t$, $b$, and $N>0$ such that
 $-n,n,t,b$ is compatible with $\Lambda$
for $n>N$. Let $k$ be a positive integer. Then by
Theorem~\ref{dbarclose}, there exist $K,L>0$ so that
\[
\dbar(\mu(\Lambda_{-n,n,t,b})|_{\bigcup_{j=-(k-1)}^0 R_j}, \mu(\Lambda_{-(n+1),n+1,t,b})|_{\bigcup_{j=-(k-1)}^0 R_j}) < kKe^{-L(n-k)}
\]
\noindent for any $n>N$ and $k \leq n$, and so this sequence of
marginalizations is $\dbar$ Cauchy and approaches a $\dbar$ limit.
Since the measures $\mu(\Lambda_{-n,n,t,b})$ approach $\mu$ weakly
as $n \rightarrow \infty$ by Proposition~\ref{weaklimit}, it must be
the case that the $\dbar$ limit of
$\mu(\Lambda_{-n,n,t,b})|_{\bigcup_{j=-k}^0 R_j}$ is
$\mu|_{\bigcup_{j=-k}^0 R_j}$.
Thus, by Theorem~\ref{MRFstripentropy}, translation-invariance of
$\mu$, and continuity of entropy with respect to the $\dbar$ metric,
we conclude that for any positive integer $k$, $h_{\mu}(R_0 \ | \
\bigcup_{j=-k}^{-1} R_j) = h_{\mu}(R_0 \ | \ R_{-1})$.

\end{proof}

\begin{corollary}\label{MRFentropy2}
If $\Lambda$ is a translation-invariant $\mathbb{Z}^2$-specification with $q(\Lambda) < p_c$ (with unique associated MRF $\mu = \mu(\Lambda)$), and $t$ and $b$ are constant sequences
such that $-n,n,t,b$ is compatible with $\Lambda$ for sufficiently large $n$,
then
$h(\mu) = h_{\mu}(R_0 \ | \ R_{-1})$.
\end{corollary}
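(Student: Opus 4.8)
The plan is to express $h(\mu)$ in terms of the row-by-row conditional entropies $h_\mu\big(R_i \ | \ \bigcup_{j=1}^{i-1} R_j\big)$ and then apply Corollary~\ref{MRFentropy} to collapse each of these to $h_\mu(R_0 \ | \ R_{-1})$.

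First I would observe that $\mu = \mu(\Lambda)$ is translation-invariant: since $\Lambda$ is translation-invariant, $\sigma_v\mu$ is also an MRF associated to $\Lambda$ for every $v \in \zz^2$, so uniqueness forces $\sigma_v\mu = \mu$. In particular $h_\mu$ of a set of rows is unchanged under vertical translation of that set, and Proposition~\ref{P3} applies to $\mu$. Next I would invoke the standard identity, valid for any translation-invariant measure on $\A^{\zz^2}$,
\[
h(\mu) = \lim_{n \to \infty} \frac{1}{n}\, h_\mu\Big(\bigcup_{j=1}^n R_j\Big),
\]
which follows from the definition of $h(\mu)$ by first sending the horizontal side of the defining rectangle to infinity (producing the one-dimensional entropy $h_\mu(\bigcup_{j=1}^n R_j)$ of the height-$n$ strip process, divided by $n$) and then using subadditivity of $n \mapsto h_\mu(\bigcup_{j=1}^n R_j)$ to replace this iterated limit by the double limit.

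Now I would telescope with Proposition~\ref{P3}:
\[
h_\mu\Big(\bigcup_{j=1}^n R_j\Big) = h_\mu(R_1) + \sum_{i=2}^n h_\mu\Big(R_i \ | \ \bigcup_{j=1}^{i-1} R_j\Big).
\]
By vertical translation-invariance of $\mu$, each summand equals $h_\mu\big(R_0 \ | \ \bigcup_{j=-(i-1)}^{-1} R_j\big)$, and by Corollary~\ref{MRFentropy}, applied with $k = i-1 \geq 1$ (whose hypotheses are exactly those assumed here), this in turn equals $h_\mu(R_0 \ | \ R_{-1})$ for every $i \geq 2$. Hence $h_\mu\big(\bigcup_{j=1}^n R_j\big) = h_\mu(R_1) + (n-1)\, h_\mu(R_0 \ | \ R_{-1})$; since $h_\mu(R_1) \leq \log|\A|$ is bounded, dividing by $n$ and letting $n \to \infty$ gives $h(\mu) = h_\mu(R_0 \ | \ R_{-1})$, as desired.

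Every step except the identity $h(\mu) = \lim_n \frac{1}{n} h_\mu(\bigcup_{j=1}^n R_j)$ is pure bookkeeping, and that identity is completely standard, so I do not anticipate a genuine obstacle. The one point to treat with care is the index shift when quoting Corollary~\ref{MRFentropy} — one must check that the hypotheses of the present statement (translation-invariant $\Lambda$, $q(\Lambda) < p_c$, constant $t,b$ with $-n,n,t,b$ compatible for all sufficiently large $n$) are precisely those under which Corollary~\ref{MRFentropy} is proved, so that it can be invoked for each of the values $k = i - 1$.
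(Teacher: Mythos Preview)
Your proof is correct and follows essentially the same approach as the paper: telescope $h_\mu\big(\bigcup_j R_j\big)$ via Proposition~\ref{P3}, use translation-invariance together with Corollary~\ref{MRFentropy} to reduce every conditional term to $h_\mu(R_0\mid R_{-1})$, then divide by the number of rows and let it tend to infinity. The paper's proof is simply terser, omitting your explicit remarks on translation-invariance of $\mu$ and on the identity $h(\mu)=\lim_n \frac{1}{n}h_\mu\big(\bigcup_{j=1}^n R_j\big)$.
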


\begin{proof}
For any such $\mu$ and positive integer $k$, $h_{\mu}(\bigcup_{j=0}^k R_j) = h_{\mu}(R_0) + h_{\mu}(R_1 \ | \ R_0) + \ldots + h_{\mu}(R_k \ | \ \bigcup_{j=0}^{k-1} R_j) = h_{\mu}(R_0) + (k-1)h_{\mu}(R_0 \ | \ R_{-1})$ by translation-invariance of $\mu$, Proposition~\ref{P3}, and Corollary~\ref{MRFentropy}. But then by dividing by $k$ and letting $k \rightarrow \infty$, we see that $h(\mu) = h_{\mu}(R_0 \ | \ R_{-1})$.

\end{proof}

\begin{theorem}\label{maintheorem}
For any valid translation-invariant $\mathbb{Z}^2$-specification $\Lambda$ with $q(\Lambda) < p_c$ (with unique associated MRF $\mu = \mu(\Lambda)$) and any integer $N$ and constant sequences $t$ and $b$ such
that $1,n,t,b$ is compatible with $\Lambda$ when $n>N$, there exist $Q, R > 0$ such that $|h(\mu(\Lambda_{1,n+1,t,b})) - h(\mu(\Lambda_{1,n,t,b})) - h(\mu)| < Qe^{-Rn}$ for any $n>N$.
\end{theorem}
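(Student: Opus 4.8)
The plan is to express both $h(\mu(\Lambda_{1,n,t,b}))$ and $h(\mu(\Lambda_{1,n+1,t,b}))$ as telescoping sums of conditional row-entropies using Proposition~\ref{P3} and Theorem~\ref{MRFstripentropy}, then compare the sums term-by-term against $h(\mu)$, controlling the discrepancies by the $\dbar$-estimates from Theorem~\ref{dbarclose} together with H\"older continuity of entropy. I write $\mu_n := \mu(\Lambda_{1,n,t,b})$ and $\mu_{n+1} := \mu(\Lambda_{1,n+1,t,b})$, both viewed as one-dimensional translation-invariant processes. By Proposition~\ref{P3},
\[
h(\mu_n) = h_{\mu_n}(R_1) + \sum_{i=2}^n h_{\mu_n}\!\left(R_i \ \middle| \ \bigcup_{j=1}^{i-1} R_j\right),
\]
and by Theorem~\ref{MRFstripentropy} each term with $i \geq 2$ equals $h_{\mu_n}(R_i \mid R_{i-1})$; similarly for $\mu_{n+1}$ with the sum running to $n+1$. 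Hence $h(\mu_{n+1}) - h(\mu_n)$ equals a single ``new'' term $h_{\mu_{n+1}}(R_{n+1}\mid R_n)$ plus a sum $\sum_{i=2}^n\big(h_{\mu_{n+1}}(R_i\mid R_{i-1}) - h_{\mu_n}(R_i\mid R_{i-1})\big)$ plus the boundary-row difference $h_{\mu_{n+1}}(R_1) - h_{\mu_n}(R_1)$.

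Next I estimate each piece. For the ``bulk'' differences $h_{\mu_{n+1}}(R_i\mid R_{i-1}) - h_{\mu_n}(R_i\mid R_{i-1})$, note this is a difference of entropies of two-row marginals (using the decomposition formula again, it is a difference of $h(R_{i-1}\cup R_i)$ and $h(R_{i-1})$ quantities for $\mu_{n+1}$ versus $\mu_n$); by Theorem~\ref{dbarclose} (first inequality, applied to the substrips $\zz\times[i-1,i]$ and $\zz\times\{i-1\}$) these two-row marginals are within $\dbar$-distance $O(e^{-L(n-i)})$ of each other, so H\"older continuity of entropy bounds each difference by $Ce^{-L(n-i)/2}$ or so (more precisely by the modulus-of-continuity expression from Section~\ref{defns}, which is $o(1)$ in the $\dbar$-distance). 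Summing a geometric-type series over $i=2,\dots,n$ gives a bound $Q_1 e^{-R_1 n}$. The boundary term $|h_{\mu_{n+1}}(R_1) - h_{\mu_n}(R_1)|$ is handled the same way via the $\dbar$-closeness of the single-row marginals $\mu_n|_{R_1}$ and $\mu_{n+1}|_{R_1}$, again exponentially small in $n$. Finally, for the ``new'' term I must argue $h_{\mu_{n+1}}(R_{n+1}\mid R_n)$ is exponentially close to $h(\mu)$: by translation-invariance $h_{\mu_{n+1}}(R_{n+1}\mid R_n) = h_{\mu_{n+1}}(R_1 \mid R_0^{\text{(shifted)}})$ interpreted within the strip, and I compare the relevant two-row marginal of $\mu_{n+1}$ near the top of the strip to the corresponding two-row marginal of $\mu$; Theorem~\ref{dbarclose} / the reasoning in Corollary~\ref{MRFentropy} shows these are $\dbar$-close when the rows are far from the \emph{other} boundary, and Corollary~\ref{MRFentropy2} gives $h(\mu) = h_\mu(R_0\mid R_{-1})$, so H\"older continuity finishes it.

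The main obstacle is the bookkeeping for the ``new'' top term: $R_{n+1}$ and $R_n$ sit right against the top boundary condition $t$, so the naive $\dbar$ estimate to $\mu$ is useless there — one cannot directly say the top two rows of $\mu_{n+1}$ look like two rows of $\mu$. The fix is to re-index using translation-invariance of the strip measure and instead compare rows near the \emph{middle} of the strip (say around height $\lfloor n/2\rfloor$), which are simultaneously far from both the top boundary $t$ and the bottom boundary $b$; there Theorem~\ref{dbarclose}-type bounds give $\dbar$-closeness to $\mu$ of order $e^{-cn}$. Concretely, $h_{\mu_{n+1}}(R_{n+1}\mid R_n)$ equals $h_{\mu_{n+1}}(R_{m+1}\mid R_m)$ for every $2\le m\le n$ by the Markov property (Theorem~\ref{MRFstripentropy}) plus translation-invariance, so I am free to evaluate it at $m \approx n/2$ and then invoke the middle-row $\dbar$-closeness. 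Collecting the three exponential bounds and taking $R$ to be the smallest of the decay rates and $Q$ a suitable constant, with a uniform choice of $K,L$ coming from Theorem~\ref{dbarclose} (which are independent of $t,b$), completes the proof. I should double-check that the modulus of continuity contributes only a polynomial-in-$n$ prefactor (from the $-\epsilon\log\epsilon$ term with $\epsilon = O(ne^{-Ln})$), which is harmless after shrinking $R$ slightly.
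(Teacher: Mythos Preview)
There is a genuine gap in your treatment of the ``bulk'' sum $\sum_{i=2}^{n}\big(h_{\mu_{n+1}}(R_i\mid R_{i-1}) - h_{\mu_n}(R_i\mid R_{i-1})\big)$. Using only the first inequality of Theorem~\ref{dbarclose}, the $i$th summand is bounded by something like $C e^{-L'(n-i)}$, and summing this over $i=2,\dots,n$ gives a convergent geometric series whose total is $O(1)$, \emph{not} $O(e^{-Rn})$. The terms with $i$ close to $n$ are simply not small: rows near the top boundary of the two strips need not have close marginals when compared at the same index, because the top boundary of $H_{1,n}$ sits one row lower than that of $H_{1,n+1}$.

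Your proposed fix---asserting that $h_{\mu_{n+1}}(R_{m+1}\mid R_m)$ is independent of $m$ by ``Markov property plus translation-invariance''---is also incorrect: $\mu_{n+1}$ is translation-invariant only horizontally, not vertically, and Theorem~\ref{MRFstripentropy} says nothing about equality of $h_{\mu_{n+1}}(R_{i}\mid R_{i-1})$ for different $i$. The paper's remedy is exactly what your diagnosis of the ``new'' term hints at but does not carry through to the bulk: split the sum at $\lfloor n/2\rfloor$. For $j<\lfloor n/2\rfloor$ compare $h_{\mu_{n+1}}(R_{j+1}\mid R_j)$ with $h_{\mu_n}(R_{j+1}\mid R_j)$ via the first inequality of Theorem~\ref{dbarclose} (bound $e^{-L(n-j)}\le e^{-Ln/2}$); for $j>\lfloor n/2\rfloor$ compare $h_{\mu_{n+1}}(R_{j+1}\mid R_j)$ with $h_{\mu_n}(R_{j}\mid R_{j-1})$ (note the shift by one row) via the \emph{second} inequality (bound $e^{-Lj}\le e^{-Ln/2}$). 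This shifted pairing leaves exactly one unmatched term $h_{\mu_{n+1}}(R_{\lfloor n/2\rfloor+1}\mid R_{\lfloor n/2\rfloor})$ sitting in the middle of the strip, which is the term that converges exponentially to $h_\mu(R_0\mid R_{-1})=h(\mu)$ by the argument you sketched.
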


\begin{proof}
Proposition~\ref{P3} and Theorem~\ref{MRFstripentropy} imply that for $n>N$,
\[
h(\mu(\Lambda_{1,n+1,t,b})) = h_{\mu(\Lambda_{1,n+1,t,b})}(R_1) + \sum_{j=1}^{n} h_{\mu(\Lambda_{1,n+1,t,b})}(R_{j+1} \ | \ R_j) \textrm{ and}
\]
\[
h(\mu(\Lambda_{1,n,t,b})) = h_{\mu(\Lambda_{1,n,t,b})}(R_1) + \sum_{j=1}^{n-1} h_{\mu(\Lambda_{1,n,t,b})}(R_{j+1} \ | \ R_j).
\]

We can then write $h(\mu(\Lambda_{1,n+1,t,b})) - h(\mu(\Lambda_{1,n,t,b}))$ as
\begin{align}
& h_{\mu(\Lambda_{1,n+1,t,b})}(R_1) - h_{\mu(\Lambda_{1,n,t,b})}(R_1)\label{term1}\\
& + \sum_{j=1}^{\lfloor \frac{n}{2} \rfloor - 1} \Big(h_{\mu(\Lambda_{1,n+1,t,b})}(R_{j+1} \ | \ R_j) - h_{\mu(\Lambda_{1,n,t,b})}(R_{j+1} \ | \ R_j)\Big)\label{term2}\\
& + h_{\mu(\Lambda_{1,n+1,t,b})}(R_{\lfloor \frac{n}{2} \rfloor + 1} \ | \ R_{\lfloor \frac{n}{2} \rfloor})\label{term3}\\
& + \sum_{j=\lfloor \frac{n}{2} \rfloor + 1}^{n} \Big(h_{\mu(\Lambda_{1,n+1,t,b})}(R_{j+1} \ | \ R_j) - h_{\mu(\Lambda_{1,n,t,b})}(R_j \ | \ R_{j-1})\Big).\label{term4}
\end{align}

By Theorem~\ref{dbarclose} and H\"{o}lder continuity of entropy with
respect to $\dbar$, we see that (\ref{term1}), (\ref{term2}), and
(\ref{term4}) are exponentially small in $n$, i.e. there exist
constants $Q$ and $R$ independent of $n$ such that each has absolute
value smaller than $Qe^{-Rn}$. Theorem~\ref{dbarclose} also implies
that the sequence $\mu(\Lambda_{1,n+1,t,b})|_{R_{\lfloor \frac{n}{2}
\rfloor} \cup R_{\lfloor \frac{n}{2} \rfloor + 1}}$ approaches a
$\dbar$ limit with exponential rate in $n$, as $n \rightarrow
\infty$. Since $\sigma_{(0,-\lfloor \frac{n}{2} \rfloor -1)}
\mu(\Lambda_{1,n+1,t,b})$ approaches $\mu$ weakly by
Proposition~\ref{weaklimit}, this $\dbar$ limit must be
$\mu|_{R_{-1} \cup R_0}$. But then again by H\"{o}lder continuity of
entropy with respect to $\dbar$ and translation-invariance of $\mu$,
(\ref{term3}) approaches $h_{\mu}(R_0 \ | \ R_{-1})$ with
exponential rate as $n \rightarrow \infty$, which equals $h(\mu)$ by
Corollary~\ref{MRFentropy2}.
\end{proof}

This result can be generalized to periodic boundary conditions as well. If $\Lambda$ is a translation-invariant $\mathbb{Z}^2$-specification with $q(\Lambda) < p_c$ and $t$ and $b$ are periodic sequences such that $1,n,t,b$ is compatible with $\Lambda$, then $\mu(\Lambda_{1,n,t,b})$ exists by Propositions~\ref{stripvalidity} and \ref{stripuniqueness}, and $(\mu(\Lambda_{1,n,t,b}))^{[p]}$ (the $p$-higher power code of $\mu(\Lambda_{1,n,t,b})$) is translation-invariant. 

\begin{theorem}\label{maintheorem2}
For any valid translation-invariant $\mathbb{Z}^2$-specification $\Lambda$ with $q(\Lambda) < p_c$ (with unique associated MRF $\mu = \mu(\Lambda)$), and any integer $N$ and sequences $t$ and $b$ with period $p$ 
such that $1,n,t,b$ is compatible with $\Lambda$ when $n>N$, there exist $Q, R > 0$ such that $|(1/p)(h((\mu(\Lambda_{1,n+1,t,b}))^{[p]}) - h((\mu(\Lambda_{1,n,t,b}))^{[p]}) - h(\mu)| < Qe^{-Rn}$ for all $n>N$.
\end{theorem}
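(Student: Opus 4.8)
The plan is to reduce Theorem~\ref{maintheorem2} to Theorem~\ref{maintheorem} by passing to the $p$-higher power code in the horizontal direction. First I would observe that for periodic $t$ and $b$ of period $p$, the measure $(\mu(\Lambda_{1,n,t,b}))^{[p]}$ is a translation-invariant one-dimensional measure on $(\A^{[1,n]\times[1,p]})^{\zz}$ (equivalently, on the alphabet of $n\times p$ blocks), since applying $\sigma_{(p,0)}$ to $\mu(\Lambda_{1,n,t,b})$ fixes it by uniqueness and periodicity, so its $p$-block recoding is genuinely shift-invariant. The key numerical fact about higher power codes is that entropy scales: for any translation-invariant $\nu$ on $\A^{\zz}$ one has $h(\nu^{[p]}) = p\,h(\nu)$, because the entropy over a window of $N$ super-symbols counts exactly the $\nu$-entropy over $pN$ ordinary symbols. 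Hence $(1/p)h((\mu(\Lambda_{1,n+1,t,b}))^{[p]}) = h(\mu(\Lambda_{1,n+1,t,b}))$ and likewise for height $n$, where on the right the entropies are of the one-dimensional processes obtained by viewing $\mu(\Lambda_{1,n,t,b})$ on columns of height $n$ --- but these are no longer translation-invariant as one-dimensional measures unless $p=1$, which is precisely why the power code is needed in the statement.

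So the real content is to rerun the argument of Theorem~\ref{maintheorem} with the one-dimensional ``letter'' being a column of height $n$ and width $p$. Concretely I would work with the $p$-block alphabet from the start: the relevant conditional entropies become $h_{\mu}\big(\bigcup_{b\in B}R_b\big)$ and $h_{\mu}\big(\bigcup_{c\in C}R_c \mid \bigcup_{b\in B}R_b\big)$ exactly as defined before, since those notations only refer to rows and make no reference to horizontal periodicity; Proposition~\ref{P3}, Theorem~\ref{MRFstripentropy}, Corollary~\ref{MRFentropy2}, and Theorem~\ref{dbarclose} are all stated for arbitrary constant or (in the case of \ref{dbarclose}) compatible boundary conditions and do not use horizontal translation-invariance of the one-dimensional process --- they only use that $\mu$ itself is a $\zz^2$-MRF and that $\Lambda$ is $\zz^2$-translation-invariant. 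The one place translation-invariance of the one-dimensional process was used in Theorem~\ref{maintheorem} was to collapse the telescoping sum of row-conditional entropies into a single term via $h_{\mu(\Lambda_{1,n,t,b})}(R_{j+1}\mid R_j) = h_{\mu(\Lambda_{1,n,t,b})}(R_1\mid \cdots)$; but $\mu(\Lambda_{1,n,t,b})$ \emph{is} $\sigma_{(p,0)}$-invariant, hence translation-invariant as a measure on $\A^{\zz^2}$, so the vertical-translation identities $h_{\mu}(R_{j+1}\mid R_j) = h_{\mu}(R_2\mid R_1)$ used in those estimates still hold. I would then copy the four-term decomposition of $h(\mu(\Lambda_{1,n+1,t,b})) - h(\mu(\Lambda_{1,n,t,b}))$ verbatim, apply Theorem~\ref{dbarclose} and H\"older continuity of entropy to bound the boundary and telescoping-difference terms exponentially, and identify the middle term's limit as $h_\mu(R_0\mid R_{-1}) = h(\mu)$ using Corollary~\ref{MRFentropy2}. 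Finally I would translate back through the scaling identity $(1/p)h(\nu^{[p]}) = h(\nu)$.

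The main obstacle --- really the only subtlety --- is bookkeeping: verifying that none of the auxiliary results invoked (especially Theorem~\ref{MRFstripentropy} and Theorem~\ref{dbarclose}) secretly used horizontal translation-invariance of the strip measure, and that $1,n,t,b$ compatible with $\Lambda$ for periodic $t,b$ still yields, via Propositions~\ref{stripvalidity} and \ref{stripuniqueness}, a well-defined unique $\mu(\Lambda_{1,n,t,b})$ whose $p$-power code is translation-invariant. I would also want to be slightly careful that the constants $Q,R$ coming out are uniform in $n$ (they are, since the $K,L$ from Theorem~\ref{dbarclose} depend only on $\Lambda$, not on $t,b,n$) and that the $1/p$ factor is correctly placed; once the scaling identity $h(\nu^{[p]})=p\,h(\nu)$ is in hand, the inequality is literally Theorem~\ref{maintheorem} divided through appropriately, so no new analytic estimate is required. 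Everything else is a transcription of the proof of Theorem~\ref{maintheorem} with ``column of height $n$'' replaced by ``$p$-wide column of height $n$.''
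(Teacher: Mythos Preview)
Your proposal is correct and follows essentially the same approach as the paper: pass to the $p$-higher power code so that the strip measures become genuinely translation-invariant, then rerun the proof of Theorem~\ref{maintheorem} on the recoded measures to obtain convergence to $h(\mu^{[p]}) = p\,h(\mu)$. The one point the paper makes explicit that you leave implicit is that Theorem~\ref{dbarclose} yields $\dbar$ bounds for the original (non-recoded) strip measures, and passing to the $p$-power code multiplies these $\dbar$ distances by at most $p$, which is what transfers the exponential closeness to the recoded measures and lets the rest of the argument go through unchanged.
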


\begin{proof}
The proof is nearly identical to
that of Theorem~\ref{maintheorem}, and so we only highlight the
slight differences. Theorem~\ref{dbarclose} implies the exponential
$\dbar$ closeness of relevant marginalizations of
$\mu(\Lambda_{1,n,t,b})$ to substrips just as before. Then, since
passing to $(\mu(\Lambda_{1,n,t,b}))^{[p]}$ multiplies the relevant
$\dbar$ distances by at most $p$, we still have the necessary
exponential $\dbar$ closeness of marginalizations for these recoded
strip measures. Since the $(\mu(\Lambda_{1,n,t,b}))^{[p]}$ are
translation-invariant, the same proof as in
Theorem~\ref{maintheorem} shows that
$h((\mu(\Lambda_{1,n+1,t,b}))^{[p]}) -
h((\mu(\Lambda_{1,n,t,b}))^{[p]})$ approaches $h(\mu^{[p]}) =
ph(\mu)$ exponentially fast.
\end{proof}

\section{Interactions and Gibbs states}\label{gibbs}

In \cite{Pa}, similar techniques were used to show that the topological entropy of the $\mathbb{Z}^2$ hard square shift $\mathcal{H}$ is exponentially well approximable by differences of consecutive topological entropies of horizontal biinfinite strips. It turns out that this is a corollary of Theorem~\ref{maintheorem}; the unique measure of maximal entropy $\mu$ for $\mathcal{H}$ is in fact the unique MRF associated to a translation-invariant $\mathbb{Z}^2$-specification $\Lambda$ satisfying $q(\Lambda) < p_c$; in this case, for any finite set $S$ and $\delta \in \A^{\partial S}$,  $\Lambda^\delta$ will be uniform on configurations which are locally admissible in $S \cup \partial S$.
If one takes $t = b = 0^{\infty}$, then $\mu(\Lambda_{1,n,t,b})$ exists for all $n$, and is the unique measure of maximal entropy for the n.n. $\zz$-SFT composed of all locally admissible configurations on $H_{1,n}$ in $\mathcal{H}$.

We will use Theorem~\ref{maintheorem2} to generalize the main result of \cite{Pa} to some topological pressures by using some classical results of Ruelle regarding the relationship between equilibrium states and a class of $\zz^2$-MRFs called Gibbs states. 

Let $X$ be a nonempty n.n. $\zz^d$-SFT $X$ with language
$\L(X)$. We are mostly interested in the cases $d=1,2$.
An {\em interaction} on $X$ is simply a real-valued function
$\Phi$ on $\L(X)$. For finite $S$ and $x \in \L_S(X)$, define
$$
U_S(x) = U_S^\Phi(x) =\sum_{S' \subseteq S} \Phi(x|_{S'}).
$$
For finite $T \subseteq \zz^d $ such that $S \cap T = \varnothing$, $y
\in \L_T(X)$ and $xy \in \L_{S \cup T}(X)$, define:
$$
W_S(x,y) = W_S^\Phi(x,y) = \sum_{T' \subseteq S \cup T:~  T' \cap S
\ne \varnothing, ~T' \cap T \ne \varnothing} ~\Phi((xy)|_{T'}).
$$

A {\em Gibbs state} for an interaction $\Phi$ on $X$ is a measure $\mu$ with support contained in $X$ such that for any $x,y$ such that $xy \in \L_{S \cup T}(X)$ and $\mu(y) > 0$,
\begin{equation}\label{115}
\mu(x \ | \ y)  = \frac{e^{-U_S(x) -  W_S(x,y)}} {\sum_{\{x'
\in \L_S(X): ~x'y \in \L_{S \cup T}(X)\}}  e^{-U_S(x') -  W_S(x',y)}}
\end{equation}
This agrees with the classical definition of Gibbs state (see
Ruelle~\cite{ruelle}, Chapter 1, in particular equation (1.15)) in
the case of a n.n. SFT. Ruelle~\cite{ruelle} (Chapter 1)
shows that if $\Phi$ is bounded, then given any such $\Phi$ and $X$,
there is at least one Gibbs state $\mu$ (Ruelle's result is actually
much more general).

An interaction $\Phi$ is {\em translation-invariant} if it assigns
the same values to all translates of a given configuration on a
finite set. An interaction is a {\em nearest neighbor (n.n.) interaction}
if it vanishes on all configurations other than those on vertices
and edges.

We now
define a specification corresponding to (\ref{115}) for translation-invariant, n.n. interactions.  For a finite set
$S \subset \zz^d$ and $z \in \L_S(X)$, let
$$
\Phi(z,S) = \sum_{u \in S} \Phi(z(u)).
$$
For a finite set $E$ of edges in $E(\mathbb{Z}^d)$ such that
$\cup_{e \in E} ~e \subset S$ and  $z \in \L_S(X)$, let
$$
\Phi(z,E) = \sum_{e \in E} \Phi(z|_e).
$$

The {\em Gibbs $\mathbb{Z}^d$-specification determined by $\Phi$ and $X$}, denoted by $\Lambda_{\Phi,X}$,
is defined as follows. For any finite $S \subset \zz^d$, $x \in \A^S$, and $\delta \in \L_{\partial S}(X)$,
\begin{equation}
\label{Gibbs_spec}
\Lambda^\delta_{\Phi,X}(x) = \frac{\exp (- \Phi(x,S) - \Phi(x\delta,E(S,S) \cup E(S, \partial S)
) )} { \sum_{\{ x' \in \A^S: ~x'\delta \in \L_{S \cup \partial S}(X) \}}
\exp (- \Phi(x',S) - \Phi(x'\delta,E(S,S) \cup E(S, \partial S)) )}
\end{equation}
if $x\delta \in \L_{S \cup \partial S}(X)$, and $\Lambda^\delta_{\Phi,X}(x) = 0$ otherwise. For $\delta \in \A^{\partial S} \setminus
\L_{\partial S}(X)$, set $\Lambda^\delta_{\Phi,X} = \Lambda^{\delta'}_{\Phi,X}$ for some arbitrary
$\delta' \in \L_{\partial S}(X)$ (note that $\L_{\partial S}(X)$ is nonempty since $X$ is
nonempty). In this case $q(\Lambda_{\Phi,X})$ is determined only by
$\{\Lambda^\delta_{\Phi,X}\}_{\delta \in \L(X)}$.

If $T$ is finite, $\partial S \subseteq T \subset \zz^d \setminus S$,
$xy \in \L_{S \cup T}(X)$, and $\delta = y|_{\partial S}$, then
$$
U_S(x) = \Phi(x,S) + \Phi(x,E(S,S))
$$
$$
W_S(x,y) =  W_S(x,\delta) = \Phi(x\delta,E(S,\partial S)),
$$
and so for any Gibbs state $\mu$ for $\Phi$ on $X$, the conditional
probabilities (\ref{115}) reduce to $\mu(x \ | \ y) =
\Lambda_{\Phi,X}^\delta(x)$. Since $\mu$ is supported in $X$, all other
conditional probabilities $\mu(x \ | \ y)$ are spurious, and so $\mu$ is a $\mathbb{Z}^d$-MRF associated to $\Lambda_{\Phi,X}$.
Since there always exists a Gibbs state for $\Phi$ and $X$ (by
Ruelle's result above), $\Lambda_{\Phi,X}$ is valid. We also note that
since $\Phi$ was assumed to be translation-invariant, $\Lambda_{\Phi,X}$
is translation-invariant as well.
\medskip

Given a translation-invariant n.n. interaction $\Phi$
on a n.n. $\zz^2$-SFT $X$, we will find it useful to
represent a corresponding Gibbs state as a Gibbs state for an
interaction which is non-zero on a single finite set, namely the set
$\Delta = \{(0,0), (0,1), (1,0)\}$. Specifically, define
\begin{equation}
\label{hatphi} \hat{\Phi}(x) = \Phi(x|_{\{(0,0)\}}) +
\Phi(x|_{\{(0,0),(0,1)\}}) + \Phi(x|_{\{(0,0),(1,0)\}})
\end{equation}
for $x \in \L_\Delta(X)$.

\begin{proposition} \label{phiphihat} Any Gibbs state for $\hat \Phi$ on $X$ is a Gibbs state for $\Phi$ on $X$.
\end{proposition}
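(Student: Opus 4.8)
The plan is to show that for any finite $S$, any $T$ with $\partial S \subseteq T \subset \zz^2 \setminus S$, and any $x,y$ with $xy \in \L_{S\cup T}(X)$ and $\mu(y) > 0$, the conditional probability $\mu(x \mid y)$ dictated by $\hat\Phi$ via (\ref{115}) equals the one dictated by $\Phi$. Since both sides of (\ref{115}) depend only on $\delta = y|_{\partial S}$ (as shown in the discussion preceding the proposition), it suffices to compare, for $\hat\Phi$ versus $\Phi$, the quantities $U_S(x) + W_S(x,\delta)$ as $x$ ranges over $\{x' \in \L_S(X) : x'\delta \in \L_{S\cup\partial S}(X)\}$, and to observe that it is enough to show these two expressions differ by a constant independent of $x$ (the additive constant cancels in the normalized ratio (\ref{115})).

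First I would write out $U_S^{\hat\Phi}(x) + W_S^{\hat\Phi}(x,\delta)$ explicitly. Because $\hat\Phi$ is supported on translates of $\Delta = \{(0,0),(0,1),(1,0)\}$, the sum $\sum_{S'} \hat\Phi((x\delta)|_{S'})$ over all $S'$ meeting $S$ runs exactly over those translates $v + \Delta$ of $\Delta$ that intersect $S$ and lie inside $S \cup \partial S$. Expanding $\hat\Phi$ via (\ref{hatphi}), each such term contributes $\Phi$ evaluated on the single site $v$, the vertical edge $\{v, v+(0,1)\}$, and the horizontal edge $\{v, v+(1,0)\}$. The key combinatorial step is to reorganize this sum by site and by edge: I would argue that the total coefficient of $\Phi(z(u))$ for $u \in S$ is exactly $1$ (the unique translate $v + \Delta$ with $v = u$ contributing its vertex term, all lying in $S \cup \partial S$ automatically since $u \in S$ and $\partial S$ is included), and likewise each edge $e$ with $e \cap S \neq \varnothing$ gets coefficient exactly $1$ from the unique translate whose "corner" is the appropriate endpoint of $e$. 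Comparing with the expansion of $U_S^\Phi(x) + W_S^\Phi(x,\delta)$ given in the excerpt — namely $\Phi(x,S) + \Phi(x, E(S,S)) + \Phi(x\delta, E(S,\partial S))$ — the two agree up to terms involving only sites and edges entirely outside $S$ (which do not arise here) or, more to the point, up to the boundary-only contributions $\Phi(\delta, \partial S)$-type terms and edge terms within $\partial S$, all of which are functions of $\delta$ alone and hence constant in $x$.

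The main obstacle I anticipate is bookkeeping at the boundary: a translate $v + \Delta$ with $v \in S$ may include a vertex or edge lying in $\partial S$, and a translate with $v \in \partial S \setminus S$ may reach into $S$ across an edge; one must check that every vertex and every edge with at least one endpoint in $S$ is counted with the correct multiplicity, and that any over- or under-counting relative to the $U_S + W_S$ formula is absorbed into an $x$-independent constant. Concretely, I would handle this by observing that $\L_{S\cup\partial S}(X)$ being nonempty forces $\delta \in \L_{\partial S}(X)$, and then note that for the purpose of (\ref{115}) we are free to shrink $T$ to $\partial S$ (the right side is unchanged), so the comparison reduces to the finite configuration $x\delta$ on $S \cup \partial S$, where the coefficient-counting argument is finite and can be done cleanly. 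Once the two exponents agree up to an additive function of $\delta$, the normalizing denominators in (\ref{115}) for $\hat\Phi$ and $\Phi$ also agree up to the same multiplicative factor, that factor cancels, and we conclude $\mu(x \mid y)$ is the same for both interactions; hence any Gibbs state for $\hat\Phi$ on $X$ satisfies the defining relation (\ref{115}) for $\Phi$ on $X$, and is therefore a Gibbs state for $\Phi$ on $X$.
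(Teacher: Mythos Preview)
Your strategy is the same as the paper's: expand $U_S^{\hat\Phi}(x)+W_S^{\hat\Phi}(x,y)$ as a sum over translates $v+\Delta$ meeting $S$, regroup by vertices and edges, and verify that the $x$-dependent part is exactly $\Phi(x,S)+\Phi(x\delta,E(S,S)\cup E(S,\partial S))$. The slip is in how you propose to handle the boundary. You write that ``we are free to shrink $T$ to $\partial S$ (the right side is unchanged)''. That is true for the n.n.\ interaction $\Phi$, but \emph{not} for $\hat\Phi$: the interaction $\hat\Phi$ has range $\Delta$, so $W_S^{\hat\Phi}(x,y)$ counts only translates $v+\Delta$ that are entirely contained in $S\cup T$. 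With $T=\partial S$, a translate with corner $v\in\partial S$ and $v+e_i\in S$ is included only if the \emph{third} vertex of $v+\Delta$ also lies in $S\cup\partial S$, which can fail (e.g.\ $S=\{(0,1)\}$, $v=(0,0)$: then $(1,0)\notin S\cup\partial S$). When that translate is dropped you lose the contribution of the edge $\{v,v+e_i\}\in E(S,\partial S)$, and that term genuinely depends on $x(v+e_i)$. So after shrinking, the two exponents no longer differ by an $x$-independent constant, and your ``coefficient exactly $1$'' claim for edges meeting $S$ is false.

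The paper's fix is to go the other way: keep $T$ large enough that every translate $v+\Delta$ meeting $S$ lies in $S\cup T$. One then gets
\[
U_S^{\hat\Phi}(x)+W_S^{\hat\Phi}(x,y)=\Phi(x,S)+\Phi\big(x\delta,\,E(S,S)\cup E(S,\partial S)\big)+\Phi\big(xy,\,D(S)\big)
\]
(plus some vertex terms at sites of $\partial S$), where the paper sets $D(S)=\{e:e\cap S=\varnothing,\ e^{\perp}\in E(S,\partial S)\}$, the collection of ``leftover'' edges carried by translates with corner in $\partial S$. Every term in the tail lives entirely in $\zz^2\setminus S$, hence is a function of $y$ alone and cancels in (\ref{115}). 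In short: enlarge $T$ rather than shrink it; then your regrouping argument works exactly as you outline, and the residual contributions are visibly $x$-independent.
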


\begin{proof}
For a horizontal edge $e = \{u, u +(1,0)\}$ let $e^\perp =
\{u, u +(0,1)\}$.   For a vertical edge $e = \{u, u +(0,1)\}$ let
$e^\perp = \{u, u +(1,0)\}$.  For a finite set $S$, let $D(S) = \{\mbox{edges } e: e \cap S = \varnothing \mbox{ and } e^\perp \in
E(S,\partial S)\}$.

For finite $T$ such that $\partial S \subseteq T \subset \zz^2
\setminus S$, $x \in \A^S$, and
 $y \in \L_T(X)$, if $xy \in
\L_{S \cup T}(X)$, then letting $\delta = y|_{\partial S}$,   we have
$$
U^{\hat{\Phi}}_S(x) + W^{\hat{\Phi}}_S(x,y) = \Phi(x,S) + \Phi(x\delta,E(S,S) \cup E(S,\partial S)) +
\Phi(xy,D(S))
$$
Note that the last term in this expression is the same if we replace
$x$ by any $x' \in \A^S$ such that $x'\delta \in \L_{S \cup
\partial S}(X)$.   It follows that for a Gibbs state for $\hat{\Phi}$ on
$X$, the conditional probabilities (\ref{115}) reduce to the
specification determined by $\Phi$ and $X$.

\end{proof}

We conclude this section by presenting two interactions on n.n. $\zz^2$-SFTs which define historically important Gibbs measures. We will return to these examples later in the paper to demonstrate how later results apply to them.

\medskip

1. The $\zz^2$ \textit{hard-core model} with activity $a$ is given by $X$ equal to the $\zz^2$ hard square shift $\mathcal{H}$ and translation-invariant n.n. interaction $\Phi$ defined by $\Phi(u) = au(v)$ for $u$ a configuration on a vertex $v$ and $\Phi(u) = 0$ for $u$ a configuration on an edge. In this model, $a$ can be thought of as the ``weight'' given to the symbol $1$.

2. The $\zz^2$ \textit{Ising antiferromagnet} with external magnetic field $h$ and temperature $T = \frac{1}{\beta}$ is defined by $X = \{\pm 1\}^{\zz^2}$ and translation-invariant n.n. interaction $\Phi$ defined by $\Phi(u) = -\beta hu(v)$ for $u$ a configuration on a vertex $v$ and $\Phi(u) = \beta u(v_1) u(v_2)$ for $u$ a configuration on an edge $\{v_1, v_2\}$. In this model, $h$ is an external influence which gives individual sites a preference between $1$ and $-1$, and $\beta$ can be thought of as the ``penalty'' imposed to aligned adjacent sites; when $\beta$ is very large, adjacent sites are more likely to differ.

\section{Strongly Irreducible SFTs}\label{SI}

We from now on restrict our attention to a specific class of n.n. $\zz^2$-SFTs
with very strong topological mixing properties.

\begin{definition}
A n.n. $\zz^2$-SFT $X$ is {\rm strongly irreducible} (with filling distance $L$) if
for any finite $S,T \subset \zz^2$ with $d(S,T) > L$, and any $x \in \L_S(X), y \in \L_T(X)$,
it is always the case that $xy \in \L_{S\cup T}(X)$.
\end{definition}

Our first use of strong irreducibility is to present a sufficient condition for a $\mathbb{Z}^2$-MRF
to be fully supported within an SFT.

\begin{proposition}\label{full_support}
If $X$ is a strongly irreducible n.n. $\mathbb{Z}^2$-SFT
and $\Lambda$ is a valid $\mathbb{Z}^2$-specification such that
$x\delta \in \L(X) \Longrightarrow \Lambda^{\delta}(x) > 0$, then any
$\mathbb{Z}^2$-MRF associated to $\Lambda$ whose support is contained in $X$
is fully supported on $X$.
\end{proposition}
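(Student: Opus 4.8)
The plan is to show that the support of any $\mathbb{Z}^2$-MRF $\nu$ associated to $\Lambda$ and contained in $X$ is all of $X$, by showing that every globally admissible configuration $w$ on a finite set $F \subset \zz^2$ has positive $\nu$-measure; since $\nu$ is supported in $X$ and $X$ is the (closed) set of configurations all of whose finite subconfigurations lie in $\L(X)$, this is equivalent to $\nu$ being fully supported on $X$. So fix $w \in \L_F(X)$ with $F$ finite. I would first enlarge $F$ to a rectangle $R \supseteq F$, and then pick a still larger rectangle $R' \supseteq R$ with $d(R, \partial R') > L$, where $L$ is the filling distance of $X$. Because $\nu$ is a probability measure supported in $X$, there is at least one configuration $\delta$ on $\partial R'$ with $\nu(\delta) > 0$ (indeed $\partial R'$ is finite, so the cylinders $[\delta]$ partition a full-measure set and one of them has positive measure); note $\delta \in \L_{\partial R'}(X)$ automatically since $\nu$ lives on $X$.

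Next I would use strong irreducibility to connect $w$ to $\delta$: since $d(R, \partial R') > L$ and both $w$ (viewed on $F \subseteq R$, or extended to all of $R$ — here I would actually first extend $w$ to some $w^* \in \L_R(X)$, which exists because $w \in \L_F(X)$) and $\delta$ are globally admissible, strong irreducibility gives $w^* \delta \in \L_{R \cup \partial R'}(X)$. (More carefully: $S = R$ and $T = \partial R'$ have $d(S,T) > L$, so any choice of admissible patterns on them is jointly admissible.) Now the MRF property enters. Applying the definition of $G$-MRF with $S = R$ and $T = \partial R'$ (legitimate since $\partial R \subseteq \partial R' \subseteq \zz^2 \setminus R$ — wait, I need $\partial(R,\zz^2) \subseteq T$; this may fail for $T = \partial R'$, so instead I would take $T$ to be the full annular region between $R$ and $R'$ together with $\partial R'$, i.e. $T = (R' \setminus R) \cup \partial R'$, which does contain $\partial R$), I get that $\nu(\cdot \mid \delta)$ on $\A^R$ is governed by $\Lambda$: more precisely, repeatedly conditioning shows $\nu(w^* \mid \delta) = \nu^\delta(w^*) = \Lambda^{\delta}(w^*)$ after marginalizing appropriately — the cleanest route is to condition $w^*$ on an admissible configuration on the annulus $R' \setminus R$ extending $\delta$ inward, use $\nu^{\text{(annulus config)}} = \Lambda^{\text{(annulus config)}}$, and observe the hypothesis $x\delta' \in \L(X) \Rightarrow \Lambda^{\delta'}(x) > 0$ forces this conditional probability to be strictly positive because $w^*$ together with that annulus configuration is globally admissible (again by strong irreducibility, or by construction since we built everything inside $X$). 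Then $\nu(w) \geq \nu(w^*) = \nu(w^* \mid \delta)\,\nu(\delta) > 0$.

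The main obstacle is bookkeeping about boundaries: the MRF definition requires conditioning on a set $T$ with $\partial(S,\zz^2) \subseteq T$, whereas strong irreducibility is most naturally applied with $S = R$, $T = \partial R'$, and these don't directly mesh. The fix is to insert the intermediate annular region: take $S = R$ and $T = (R' \setminus R)$, find an admissible configuration $\gamma$ on $R' \setminus R$ of positive $\nu$-measure (using that $\partial R'$ already carried positive mass, and filling inward via strong irreducibility), and then the chain $\nu(w) \ge \nu(w^*) = \nu^{\gamma}(w^*)\,\nu(\gamma) = \Lambda^{\gamma|_{\partial R}}(w^*)\,\nu(\gamma)$, where the last probability is positive by the hypothesis on $\Lambda$ precisely because $w^* \cdot \gamma \in \L_{R \cup (R' \setminus R)}(X) \subseteq \L(X)$, which holds since $w^*$ and $\gamma$ were chosen admissibly and far enough apart (or simply as restrictions of a common element of $X$, which strong irreducibility guarantees exists). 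Everything else — existence of the positive-measure boundary configuration, existence of $w^*$ extending $w$, extending an admissible pattern to a global point — is routine and uses only finiteness and strong irreducibility.
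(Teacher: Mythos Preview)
Your overall strategy---enlarge to a rectangle, pick a positive-measure boundary configuration far away, use strong irreducibility to connect, then invoke the specification hypothesis---is exactly right and matches the paper. But the annulus execution has a real gap.

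You need a configuration $\gamma$ on $R' \setminus R$ satisfying two things simultaneously: (i) $\nu(\gamma) > 0$, so that you may condition on it, and (ii) $w^* \cdot (\gamma|_{\partial R}) \in \L(X)$, so that the hypothesis yields $\Lambda^{\gamma|_{\partial R}}(w^*) > 0$. You obtain (i) by decomposing $\nu$ over annulus configurations, and (ii) by restricting a point $z \in X$ produced by strong irreducibility---but these may give \emph{different} $\gamma$'s. Strong irreducibility guarantees admissibility, not positive $\nu$-measure; and since $R$ and $R' \setminus R$ are adjacent, your remark that $w^*$ and $\gamma$ were ``chosen admissibly and far enough apart'' does not apply. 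There is no non-circular reason the particular $\gamma = z|_{R' \setminus R}$ should have $\nu(\gamma) > 0$; establishing that is essentially the proposition itself. (Minor: the chain should read $\nu(w^*) \geq \nu^{\gamma}(w^*)\,\nu(\gamma)$, not equality.)

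The fix is the move you almost made before detouring to the annulus: apply the specification with $S = R'$ rather than $S = R$. Strong irreducibility yields $x \in \L_{R'}(X)$ with $x|_{R} = w^*$ and $x\delta \in \L(X)$. Since $\delta \in \A^{\partial R'}$ has $\nu(\delta) > 0$, the association with $\Lambda$ gives $\nu^{\delta}(x) = \Lambda^{\delta}(x) > 0$ directly from the hypothesis, so $\nu(x) \geq \nu^{\delta}(x)\,\nu(\delta) > 0$ and hence $\nu(w) > 0$. No annulus configuration is needed, and no intermediate positive-measure claim has to be manufactured. This is precisely the paper's argument.
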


\begin{proof}
Denote by $L$ the filling distance of $X$. Let $\mu$ be any $\zz^2$-MRF associated to $\Lambda$
whose support is contained in $X$. Fix any $w \in
\L_{[-n,n]^2}(X)$ and any $\delta \in
\A^{\partial([-n-L,n+L]^2)}$ such that $\delta$ has positive $\mu$-measure. Since
the support of $\mu$ is contained in $X$, $\delta \in \L(X)$.
Therefore, by strong irreducibility of $X$, there exists $x \in
\L_{[-n-L,n+L]^2}(X)$ with $x|_{[-n,n]^2} = w$ and $x\delta \in \L(X)$.
By the assumption on $\Lambda$,
$\Lambda^{\delta}(x) > 0$, and since $\mu$ is associated to
$\Lambda$, this means that $\mu^{\delta}(x) > 0$. But
then since $\mu(\delta)$ is positive, $\mu(x)$ is as well.
Since $w$ is a subconfiguration of $x$, $\mu(w) > 0$.

\end{proof}

From this, we can conclude a fact about certain valid $\mathbb{Z}^2$-specifications supported on strongly irreducible n.n. $\mathbb{Z}^2$-SFTs that will be useful later.

\begin{proposition}\label{HZ}
Let $X$ be a strongly irreducible n.n. $\mathbb{Z}^2$-SFT
and $\Lambda$ be a valid $\mathbb{Z}^2$-specification such that
$x\delta \in \L(X) \Longrightarrow \Lambda^{\delta}(x) > 0$.
Assume that there exists a $\mathbb{Z}^2$-MRF associated to $\Lambda$ whose support
is contained in $X$ and that
$q(\Lambda) < 1$. Then for any rectangle $R$ and $\delta, \delta' \in \L_{\partial R}(X)$, there exists $u \in \A^R$ such that $\delta u, \delta' u \in \L(X)$.
\end{proposition}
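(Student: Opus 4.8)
\textit{Proof proposal.} The plan is to produce the required $u$ as a single configuration on $R$ that lies simultaneously in the supports of the conditional measures $\mu^{\delta}$ and $\mu^{\delta'}$ (restricted to $R$), and to locate such a $u$ using the disagreement--percolation coupling of van den Berg and Maes.

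First I would note that the hypotheses are precisely those of Proposition~\ref{full_support}, so the given $\mathbb{Z}^2$-MRF $\mu$ associated to $\Lambda$ is fully supported on $X$. Hence for any finite $W \subseteq \zz^2$ and any $w \in \A^W$ we have $\mu(w) > 0$ exactly when $w \in \L(X)$. In particular $\mu(\delta), \mu(\delta') > 0$, so $\mu^{\delta}$ and $\mu^{\delta'}$ are defined on $\A^R$, and for $u \in \A^R$ one has $\mu^{\delta}(u) > 0$ exactly when $\delta u \in \L(X)$, and $\mu^{\delta'}(u) > 0$ exactly when $\delta' u \in \L(X)$. Thus it suffices to exhibit one $u \in \A^R$ in the (finite) supports of both $\mu^{\delta}$ and $\mu^{\delta'}$.

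To find such a $u$, I would invoke the coupling $\lambda' \in C(\mu^{\delta},\mu^{\delta'})$ furnished by Theorem~1 of \cite{vdBM} (the same coupling used in the proof of Theorem~\ref{vdBM}), thought of as a measure on $\A^{R \cup \partial R} \times \A^{R \cup \partial R}$ via the identification of pairs with $(u\delta, v\delta')$. It has the two properties recorded there: the disagreement field $\phi\lambda'$ on $\{0,1\}^{R \cup \partial R}$ is stochastically dominated by $P_{(1-q,q)}$ with $q = q(\Lambda)$, and $\lambda'$-almost surely, for every $v \in R$ one has $x(v) \neq y(v)$ if and only if there is a path of disagreement sites from $v$ to $\partial R$. (The construction of $\lambda'$ needs only that $\Lambda$ is valid; the bound $q(\Lambda) < p_c$ plays no role here, only $q(\Lambda) < 1$.) Let $E$ be the event that no path of disagreement sites joins a site of $R$ to $\partial R$; its complement is increasing in the disagreement field, so $\phi\lambda'(E^c) \leq P_{(1-q,q)}(E^c)$. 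Since $R \cup \partial R$ is finite and $q < 1$, the event that every site of $R \cup \partial R$ carries a $0$ has $P_{(1-q,q)}$-probability $(1-q)^{|R \cup \partial R|} > 0$ and is contained in $E$; hence $P_{(1-q,q)}(E^c) < 1$, so $\phi\lambda'(E) > 0$ and therefore $\lambda'$ assigns positive mass to pairs whose disagreement field lies in $E$.

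Since $\lambda'$ lives on a finite set, it then has an atom $(x,y)$ with $\phi(x,y) \in E$; by the second property above, $x(v) = y(v)$ for all $v \in R$. Putting $u := x|_R = y|_R$, the marginals of $\lambda'$ give $\mu^{\delta}(u) > 0$ and $\mu^{\delta'}(u) > 0$, whence $\delta u \in \L(X)$ and $\delta' u \in \L(X)$, as required. I expect the main obstacle, apart from bookkeeping, to be seeing that a naive site-by-site filling of $R$ fails --- the Markov reduction at a site requires all four of its neighbours to be already determined, which cannot be arranged by any filling order --- and that the van den Berg--Maes coupling sidesteps this exactly because its percolation description upgrades ``no disagreement percolation,'' an event of positive probability by stochastic domination and finiteness of $R$, into agreement on all of $R$ simultaneously.
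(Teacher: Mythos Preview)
Your argument is correct: the van den Berg--Maes coupling of $\mu^{\delta}$ and $\mu^{\delta'}$ exists for any valid specification, the event ``no disagreement path from $R$ to $\partial R$'' is decreasing, stochastic domination together with $q(\Lambda)<1$ forces it to have positive $\phi\lambda'$-probability, and property~(ii) then yields a common atom $u$ on $R$ with $\mu^{\delta}(u),\mu^{\delta'}(u)>0$, hence $\delta u,\delta' u\in\L(X)$ by full support.

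However, the paper takes a genuinely different and more elementary route, and your closing remark misdiagnoses it. The paper first treats the single-site case $R=\{(0,0)\}$: since $q(\Lambda)<1$, for any $\zeta,\zeta'\in\L_{N_{(0,0)}}(X)$ the distributions $\Lambda^{\zeta},\Lambda^{\zeta'}$ have overlapping supports, so there is $a\in\A$ with $\zeta a,\zeta' a\in\L(X)$. For a general rectangle it then fills $R$ site by site in lexicographic order, maintaining the invariant that $\delta u_k,\delta' u_k\in\L(X)$. At each new site $r_{k+1}$, global admissibility of $\delta u_k$ and $\delta' u_k$ allows one to extend each separately to configurations supplying (possibly different) complete neighbourhoods $\zeta,\zeta'\in\L_{N_{r_{k+1}}}(X)$, and the single-site case then furnishes a common $a$ that works for both. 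So your objection that ``all four neighbours must already be determined'' does not apply: the two boundaries are allowed to determine the missing neighbours differently, and the single-site overlap statement still produces a common filler.

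The trade-off is that the paper's induction uses only the raw inequality $q(\Lambda)<1$ at single sites and avoids invoking the coupling construction altogether, making the proposition self-contained; your approach is shorter once one is willing to import the vdBM coupling (already used in Theorem~\ref{vdBM}), and it makes transparent why the conclusion should hold for arbitrary finite shapes, not just rectangles.
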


\begin{proof}
Consider such $X$, $\Lambda$, $R$, $\delta$, and $\delta'$, and let $\mu$ be a $\mathbb{Z}^2$-MRF associated to $\Lambda$
with support contained in $X$.
By Proposition~\ref{full_support}, $\mu$ is fully supported on $X$.

Without loss of generality, we assume $R = [0,m] \times [0,n]$ for some nonnegative $m,n$. We begin by proving the proposition for $m = n = 0$, i.e. $R = \{(0,0)\}$.  Consider any $\zeta, \zeta' \in \L_{N_{(0,0)}}(X)$. Then by full support of $\mu$ on $X$, $\mu(\zeta), \mu(\zeta') > 0$. Since $q(\Lambda) < 1$, there exists $a \in \A^{\{(0,0)\}}$ such that $\Lambda^{\zeta}(a), \Lambda^{\zeta'}(a) > 0$, and since $\mu$ is associated to $\Lambda$, $\mu^{\zeta}(a), \mu^{\zeta'}(a) > 0$. Therefore, $\mu(\zeta a), \mu(\zeta' a) > 0$, and since the support of $\mu$ is contained in $X$, $\zeta a, \zeta' a \in \L(X)$.

We will use this fact to deal with general $R$. Define by $r_1, r_2, \ldots, r_{mn}$ the elements of $R$, listed in lexicographic order (i.e. the bottom row from left to right, then the next row from left to right, etc.) For each $k \in [0,mn]$, define $R_k = \{r_i\}_{i=1}^k$. We define $u$ letter by letter, on the sites $r_i$ in order. Suppose that for some $k \in [0,mn)$, we have defined $u_k \in \A^{R_k}$ such that $\delta u_k, \delta' u_k \in \L(X)$. We must define $u_{k+1} \in \A^{R_{k+1}}$ such that $u_{k+1}|_{R_k} = u_k$ and $\delta u_{k+1}, \delta' u_{k+1} \in \L(X)$.

Define $C = N_{r_{k+1}} \setminus (R_k \cup \partial R)$, the set of neighbors of $r_{k+1}$ which do not have letters assigned to them in $\delta$, $\delta'$, or $u_k$. Clearly, since $\delta u_k$ and $\delta' u_k$ are globally admissible in $X$, there exist $\eta, \eta' \in \L_C(X)$ such that $\delta \eta u_k, \delta' \eta' u_k \in \L(X)$. Define $\zeta = (\delta \eta u_k)|_{N_{r_{k+1}}}$ and $\zeta' = (\delta' \eta' u_k)|_{N_{r_{k+1}}}$; clearly $\zeta, \zeta' \in \L(X)$.

Then by the proof of the proposition for $R$ consisting of a single site, there exists $a \in \L_{\{r_{k+1}\}}(X)$ such that $\zeta a, \zeta' a \in \L(X)$. Then from the facts that $X$ is a n.n. SFT, that $\zeta, \zeta' \in \A^{N_{r_{k+1}}}$, and that $\delta \eta u_k$ and $\delta' \eta' u_k$ are globally admissible in $X$, we can conclude that $\delta \eta u_k a$ and $\delta' \eta' u_k a$ are globally admissible in $X$ as well. But then trivially $\delta u_k a, \delta' u_k a \in \L(X)$, and so we can take $u_{k+1} = u_k a$.

This inductive process eventually yields $u_{mn} \in \A^R$ for which $\delta u_{mn}, \delta' u_{mn} \in \L(X)$, and so by taking $u = u_{mn}$ we are done.

\end{proof}

Our main application of strong irreducibility is, for any
Gibbs specification $\Lambda_{\Phi,X}$, to
guarantee the existence of periodic rows $t,b$ such that
$(\Lambda_{\Phi,X})_{m,n,t,b}$ is valid for $n-m$ large.


We first recall the result of Ward~\cite{ward} that any strongly
irreducible n.n. $\zz^2$-SFT $X$ has a globally admissible periodic
row (i.e., a periodic configuration on $\zz \times \{0\}$ which
extends to an element of $X$); in fact, every such SFT has a doubly
periodic element, though we will not need this fact here.

For integers $m < n$ and globally admissible periodic rows $t,b$,
let $X_{m,n,t,b}$ be the set of all configurations $x \in \A^{\zz
\times [m,n]}$ such that $txb$ is locally admissible. Note that all
elements of $X_{m,n,t,b}$ are in fact globally admissible.

\begin{proposition}\label{periodic}
Let $X$ be a strongly irreducible n.n. $\zz^2$-SFT and
$\Phi$ be a translation-invariant n.n. interaction on $X$.
Let $t$ and $b$ be periodic rows which are globally admissible in $X$ (which always
exist by~\cite{ward}). If $n-m$ exceeds the filling distance of $X$,
then $m,n,t,b$ is compatible with $\Lambda_{\Phi,X}$ (and therefore
by Proposition~\ref{stripvalidity} $(\Lambda_{\Phi,X})_{m,n,t,b}$ is
valid).

Moreover, there is an MRF associated to
$(\Lambda_{\Phi,X})_{m,n,t,b}$ supported in  $X_{m,n,t,b}$.

\end{proposition}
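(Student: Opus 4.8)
The plan is to verify the compatibility condition of Definition~\ref{compatible} directly: I must produce an MRF $\mu$ associated to $\Lambda_{\Phi,X}$ and, for all sufficiently large $k$, a configuration $\delta_k \in \A^{\partial([-k,k]\times[m,n])}$ of positive $\mu$-measure whose top row is $t|_{[-k,k]}$ and whose bottom row is $b|_{[-k,k]}$. The natural candidate for $\mu$ is a Gibbs state for $\Phi$ on $X$, which exists by Ruelle's result quoted in Section~\ref{gibbs} and is (as shown there) a $\mathbb{Z}^2$-MRF associated to $\Lambda_{\Phi,X}$. The first thing I would observe is that since $t$ and $b$ are globally admissible periodic rows, they are in particular globally admissible, so $t|_{[-k,k]}$ and $b|_{[-k,k]}$ are globally admissible configurations on horizontal segments; the issue is to complete them to a boundary configuration $\delta_k$ on $\partial([-k,k]\times[m,n])$ that is globally admissible and has positive $\mu$-measure.

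The key step is to realize the whole boundary configuration $\delta_k$ as part of a globally admissible configuration in $X$. Here is where strong irreducibility enters: the top segment $t|_{[-k-1,k+1]}$ lives on the row $\mathbb{Z}\times\{n+1\}$ and the bottom segment $b|_{[-k-1,k+1]}$ lives on $\mathbb{Z}\times\{m-1\}$, and these two rows are separated by vertical distance $n-m+1$. If $n-m$ exceeds the filling distance $L$ of $X$, then $n-m+1 > L$, so by strong irreducibility the concatenation of (a globally admissible extension of) $t$ on row $n+1$ with (a globally admissible extension of) $b$ on row $m-1$ is globally admissible; in fact it is cleaner to take the doubly periodic (or just periodic) globally admissible configurations and note that $t$ on row $n+1$ together with $b$ on row $m-1$, being at distance $>L$, can be glued into a single element of $\L(X)$ supported on those two rows. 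Then I would extend this to a globally admissible configuration $w$ on all of $\partial([-k,k]\times[m,n])$ — the left and right columns $\{\pm k\}\times[m,n]$ and the parts of rows $m-1,n+1$ over $[-k,k]$ — using global admissibility of $X$: such an extension exists because $X$ is nonempty and any globally admissible configuration on a finite set extends; more carefully, one uses strong irreducibility once more (or the definition of "globally admissible") to fill the boundary shape consistently. Set $\delta_k = w$.

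It remains to check $\mu(\delta_k) > 0$. Since $X_{m,n,t,b}$ (or more simply: configurations on $\partial([-k,k]\times[m,n])$ extending to elements of $X$ whose rows $n+1$ and $m-1$ agree with $t$ and $b$) is nonempty and $\mu$ is a Gibbs state, I would invoke the fact that a Gibbs state for a bounded interaction on a strongly irreducible SFT is fully supported — this follows from Proposition~\ref{full_support} applied to $\Lambda_{\Phi,X}$, once one observes that $x\delta \in \L(X) \Rightarrow \Lambda_{\Phi,X}^{\delta}(x) > 0$, which is immediate from formula~(\ref{Gibbs_spec}) since the numerator there is a strictly positive exponential whenever $x\delta$ is locally admissible, and local admissibility follows from global admissibility. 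Hence $\mu$ is fully supported on $X$, so $\mu(\delta_k) > 0$, giving compatibility. For the "moreover" clause: run the construction of $\delta_k$ so that its rows immediately above and below the strip are exactly $t|_{[-k,k]}$ and $b|_{[-k,k]}$ (which is what the above guarantees), take weak limits of $\mu^{\delta_k}$ as in the proof of Proposition~\ref{stripvalidity} to obtain an MRF associated to $(\Lambda_{\Phi,X})_{m,n,t,b}$, and note that because each $\mu^{\delta_k}$ is supported on configurations extending to $X$ with top/bottom boundary $t,b$, any weak limit is supported on $X_{m,n,t,b}$; closedness of $X_{m,n,t,b}$ in $\A^{\mathbb{Z}\times[m,n]}$ makes this limit argument go through.

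The main obstacle I anticipate is the bookkeeping in the boundary-extension step: $\partial([-k,k]\times[m,n])$ is an "L-shaped-at-the-corners" frame (two full vertical columns plus two horizontal segments, meeting at four corner sites), and I must be careful that the globally admissible completion respects the already-prescribed rows $t$ and $b$ at the top and bottom, including at the corner sites where a column meets a prescribed row. Using that $t,b$ are globally admissible and invoking strong irreducibility with the hypothesis $n-m > L$ to separate the influence of the top prescription from the bottom prescription is exactly what makes this possible; the periodicity of $t,b$ is used only to ensure they exist (via \cite{ward}) and plays no further role in this argument beyond what global admissibility already gives.
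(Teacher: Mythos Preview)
Your proposal is correct and follows essentially the same approach as the paper: take a Gibbs state $\mu$ (which exists by Ruelle and is an MRF associated to $\Lambda_{\Phi,X}$), use strong irreducibility with $n-m>L$ to produce a globally admissible configuration carrying $t$ on row $n+1$ and $b$ on row $m-1$, invoke Proposition~\ref{full_support} (via the positivity of the Gibbs specification on $\L(X)$) to get $\mu(\delta_k)>0$, and pass to a weak limit for the ``moreover'' clause. The paper streamlines the step you flag as an obstacle by constructing $w_k \in \L_{[-k,k]\times[m-1,n+1]}(X)$ on a \emph{full} rectangle and then setting $\delta_k = w_k|_{\partial([-k+1,k-1]\times[m,n])}$, which sidesteps the frame-filling bookkeeping entirely; but your direct approach to the frame also works once you note (as you do) that the globally admissible concatenation of $t$ and $b$ extends to an element of $X$ whose restriction to the frame gives $\delta_k$.
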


\begin{proof}
Let $t$ and $b$ be such rows. Since $\Lambda_{\Phi,X}$ is Gibbs, it is valid, and so
has an associated MRF $\mu$
supported in $X$.
Let $m,n$ be chosen so that $n-m$
exceeds the filling distance of $X$. Then for all $k$, there exists $w_k \in \L_{[-k,k] \times [m-1,n+1]}(X)$
whose top row is $t|_{[-k,k]}$ and bottom row is $b|_{[-k,k]}$.
Proposition~\ref{full_support} applies to $\mu$, and so $\mu(w_k) >
0$.
Thus, $m,n,t,b$ is compatible with $\Lambda_{\Phi,X}$ (by taking
$\delta_k = w_k|_{\partial ([-k+1,k-1] \times [m,n])}$ in the
definition of compatibility).

The measure obtained as in Proposition~\ref{stripvalidity} from the
$\delta_k$ is clearly supported in $X_{m,n,t,b}$.

\end{proof}

For any translation-invariant n.n. interaction $\Phi$, strongly irreducible n.n.
$\zz^2$-SFT $X$ with filling distance $L$, globally admissible
periodic $t$ and $b$, and $m,n$ with $n-m > L$, for notational
convenience we denote by $\Lambda_{\Phi,X,m,n,t,b}$ the
specification $(\Lambda_{\Phi,X})_{m,n,t,b}$ which by
Proposition~\ref{periodic} is valid. If in addition $q(\Lambda) < 1$, then by
Proposition~\ref{stripuniqueness} there is a unique $H_{m,n}$-MRF
$\mu(\Lambda_{\Phi,X,m,n,t,b})$ associated to
$\Lambda_{\Phi,X,m,n,t,b}$, which we call the {\em induced Gibbs
state} for $\Phi$ and $X$, and which, again for convenience, we
denote by $\mu_{\Phi,X,m,n,t,b}$.
By Proposition~\ref{periodic} $\mu_{\Phi,X,m,n,t,b}$ is supported in
$X_{m,n,t,b}$.

\section{Induced Gibbs states as one-dimensional Markov Chains}\label{markov}

Our eventual goal is to use the results of Sections~\ref{MRFuniqueness},~\ref{gibbs}, and~\ref{SI} to give an algorithm for approximating certain topological pressures by means of induced Gibbs states on strips. For this procedure to be useful though, it must be the case that these induced Gibbs states are tractable measures to deal with. Under some elementary assumptions, this does turn out to be the case; in fact the induced Gibbs states turn out to be Markov chains.

Let $X$ be a strongly irreducible n.n. $\zz^2$-SFT with filling distance $L$, $\Phi$ a translation-invariant n.n.
interaction on $X$ such that $q(\Lambda_{\Phi,X}) < 1$, and let $\mu$ be any Gibbs state for $\Phi$ on $X$. Let $t$ and $b$ be globally admissible periodic rows for $X$, which for now we assume to be constant (though such rows may not exist in general): $t = t_0^\infty, b=b_0^\infty$. Let $m,n$ be integers such that $n-m>L$.

%
%

Let $\C_{m,n,t,b}$ denote the set of all ``locally
admissible columns compatible with $t$ and $b$'' on $H_{m,n}$, i.e., all $x_m \ldots x_n$  such
that $x_i x_{i+1} \in \E_2$ for $i=m, \ldots, n-1$, and $b_0 x_m, x_n t_0
\in \E_2$.
Let $\E_{m,n,t,b}$ denote the set of all locally
admissible $(n-m+1) \times 2$ rectangles, i.e., ordered pairs of
columns $(x_m \ldots x_{n}, y_m \ldots y_{n}) \in (\C_{m,n,t,b})^2$ such
that $x_i y_i \in \E_1$ for each $i=m, \ldots, n$.

Observe that $X_{m,n,t,b}$ (defined near the end of
Section~{\ref{SI}) is the n.n. $\zz$-SFT on the alphabet
$\C_{m,n,t,b}$ defined by $\E = \E_{m,n,t,b}$. Here, we are
identifying a configuration on a finite
interval
of $\zz$ over the alphabet $\C_{m,n,t,b}$ with the
corresponding configuration on a finite rectangle in $\zz \times
[m,n]$ over the alphabet $\A$. Recall that since $t$ and $b$ are
globally admissible and $X$ is a n.n. $\zz^2$-SFT, we have
$\L(X_{m,n,t,b}) \subset \L(X)$.

Denote by $\overline{\Lambda}_{\Phi,X,m,n,t,b}$ the restriction of the specification $\Lambda_{\Phi,X,m,n,t,b}$ to configurations $\delta$ of the form $\partial(H_{m,n},F \times [m,n])$ for finite sets $F$. We think of $\overline{\Lambda}_{\Phi,X,m,n,t,b}$ as a $\mathbb{Z}$-specification over the alphabet $\A^{[m,n]}$.

\begin{corollary}\label{HZ2}
If $X$ is a strongly irreducible n.n. $\zz^2$-SFT, $\Phi$ is a translation-invariant n.n. interaction for which $q(\Lambda_{\Phi,X}) < 1$, $t,b$ are globally admissible constant sequences, and $n-m$ exceeds the filling distance of $X$, then $q(\overline{\Lambda}_{\Phi,X,m,n,t,b}) < 1$.
\end{corollary}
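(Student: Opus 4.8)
The plan is to reduce the bound $q(\overline{\Lambda}_{\Phi,X,m,n,t,b}) < 1$ to the hypothesis $q(\Lambda_{\Phi,X}) < 1$ together with the topological input supplied by Proposition~\ref{HZ}. Recall that $q(\overline{\Lambda}_{\Phi,X,m,n,t,b})$ is the supremum over columns $g \in \zz$ of the maximum of $d(\overline{\Lambda}^{\delta}, \overline{\Lambda}^{\delta'})$ over pairs of boundary configurations $\delta,\delta'$ on the two neighboring columns $\{g-1,g+1\} \times [m,n]$ (supplemented above and below by $t$ and $b$); here $\overline{\Lambda}^{\delta}$ is a probability distribution on the single column $\{g\} \times [m,n]$, i.e., on $\C_{m,n,t,b}$. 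Since the variational distance of two probability measures on a finite set is $1$ if and only if their supports are disjoint, and is strictly less than $1$ otherwise, it suffices to show that \emph{every} such pair $\overline{\Lambda}^{\delta}, \overline{\Lambda}^{\delta'}$ has a common element in its support: some column $u \in \C_{m,n,t,b}$ with $\overline{\Lambda}^{\delta}(u) > 0$ and $\overline{\Lambda}^{\delta'}(u) > 0$. Because there are only finitely many pairs $(\delta,\delta')$ up to horizontal translation (the specification is translation-invariant, and $\C_{m,n,t,b}$ and the relevant boundary alphabet are finite), the sup over $g$ of these finitely many values strictly below $1$ is itself strictly below $1$.

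First I would identify exactly what ``positive $\overline{\Lambda}^{\delta}$-probability'' means in terms of $X$. By the definition of the Gibbs specification~(\ref{Gibbs_spec}) and the definition of $\Lambda_{m,n,t,b}$ via~(\ref{mntbdefn}), $\overline{\Lambda}^{\delta}(u) > 0$ precisely when the configuration consisting of $u$ on $\{g\}\times[m,n]$, the neighbor column data $\delta$ on $\{g-1,g+1\}\times[m,n]$, and the rows $t,b$ immediately above and below, is locally admissible in $X$. Since $t$ and $b$ are globally admissible periodic (here constant) rows and $n-m$ exceeds the filling distance, local admissibility of such a finite block is equivalent to global admissibility, i.e., to membership in $\L(X)$ — this is the same observation used near the end of Section~\ref{SI} that $\L(X_{m,n,t,b})\subset \L(X)$. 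Concretely, writing $R = \{g\}\times[m,n]$, the two boundary configurations $\delta t b$ and $\delta' t b$ (completed with the rows $t,b$) are elements of $\L_{\partial R}(X)$ — nonempty by the compatibility established in Proposition~\ref{periodic} — and $\overline{\Lambda}^{\delta}(u)>0 \iff (\delta t b)\, u \in \L(X)$.

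Now I invoke Proposition~\ref{HZ} with the rectangle $R = \{g\}\times[m,n]$. Its hypotheses are satisfied: $X$ is strongly irreducible, $\Lambda_{\Phi,X}$ is a valid $\zz^2$-specification with $x\delta\in\L(X)\Rightarrow \Lambda_{\Phi,X}^{\delta}(x)>0$ (immediate from~(\ref{Gibbs_spec}), since the numerator is a positive exponential whenever $x\delta$ is admissible), there is an associated MRF supported in $X$ (any Gibbs state for $\Phi$ on $X$, which exists by Ruelle's theorem), and $q(\Lambda_{\Phi,X})<1$ by hypothesis. Proposition~\ref{HZ} then yields $u \in \A^{R}$ with both $(\delta t b)\,u \in \L(X)$ and $(\delta' t b)\,u \in \L(X)$; automatically $u \in \C_{m,n,t,b}$ since $u$ is vertically admissible and compatible with $t_0,b_0$. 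Hence $\overline{\Lambda}^{\delta}(u) > 0$ and $\overline{\Lambda}^{\delta'}(u)>0$, so $d(\overline{\Lambda}^{\delta},\overline{\Lambda}^{\delta'})<1$; taking the maximum over the finitely many pairs and the sup over the (translation-equivalent) columns $g$ gives $q(\overline{\Lambda}_{\Phi,X,m,n,t,b})<1$.

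The only real obstacle is bookkeeping: one must be careful that the boundary configurations appearing in the definition of $q_g(\overline{\Lambda})$ — which live on $\partial(\{g\}\times[m,n], H_{m,n})$, i.e., just the two side columns within the strip — are correctly matched, via~(\ref{mntbdefn}), to configurations on the full $\zz^2$-boundary $\partial R$ by appending the rows from $t$ and $b$, and that Proposition~\ref{HZ} is applied to a rectangle $R$ that is literally a single column of height $n-m+1$. Once this translation of hypotheses is in place, the argument is just "strictly-less-than-$1$ variational distance $=$ overlapping supports, and overlapping supports is exactly what Proposition~\ref{HZ} provides," with finiteness (of the alphabet and of boundary-configuration pairs up to translation) upgrading the pointwise strict inequalities to a uniform one.
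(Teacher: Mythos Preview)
Your proposal is correct and follows essentially the same route as the paper's proof: both reduce $q(\overline{\Lambda}_{\Phi,X,m,n,t,b})<1$ to the existence of a common column filling for any two admissible strip boundaries, rephrase that as a statement about $\L_{\partial R}(X)$ for the single-column rectangle $R=\{g\}\times[m,n]$, and then invoke Proposition~\ref{HZ}. Your treatment is somewhat more explicit about the finiteness/translation-invariance that upgrades the pointwise strict inequalities to a uniform one, but this is the only difference.
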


\begin{proof}
To show that $q(\overline{\Lambda}_{\Phi,X,m,n,t,b}) < 1$, it suffices to show that for any $c,c' \in \L_{\{-1\}}(X_{m,n,t,b})$ and $d,d' \in \L_{\{1\}}(X_{m,n,t,b})$ such that $cd, c'd' \in \L(X_{m,n,t,b})$, there exists $e \in \L_{\{0\}}(X_{m,n,t,b})$ such that $ced, c'ed' \in \L(X_{m,n,t,b})$. (This is sufficient since $\Lambda_{\Phi,X}$, being a Gibbs specification, has the property that $\Lambda_{\Phi,X}^{\delta}(u) > 0$ if $u \delta \in \L(X)$.)

However, note that this is equivalent to showing that for any $\delta, \delta' \in \L_{\partial (\{0\} \times [m,n])}(X)$ with $t_0$ at the top and $b_0$ at the bottom, there exists $u \in \A^{\{0\} \times [m,n]}(X)$ such that $\delta u, \delta' u \in \L(X)$. This is a straightforward consequence of Proposition~\ref{HZ}, which can be applied since $\Lambda_{\Phi,X}$ is the Gibbs specification determined by $\Phi$ and $X$ and $q(\Lambda_{\Phi,X}) < 1$.

\end{proof}

\medskip

\begin{corollary}\label{mixing}
If $X$ is a strongly irreducible n.n. $\zz^2$-SFT, $\Phi$ is a translation-invariant n.n. interaction for which $q(\Lambda_{\Phi,X}) < 1$, $t,b$ are globally admissible constant sequences, and $n-m$ exceeds the filling distance of $X$, then $X_{m,n,t,b}$ is a mixing $\zz$-SFT.
\end{corollary}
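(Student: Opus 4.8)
The plan is to show that the adjacency matrix of the $\zz$-SFT $X_{m,n,t,b}$ (over the alphabet $\C_{m,n,t,b}$, with adjacency set $\E_{m,n,t,b}$) is primitive, after first discarding those columns in $\C_{m,n,t,b}$ which do not appear in any element of $X_{m,n,t,b}$. Since primitivity of the adjacency matrix is precisely the definition of a mixing $\zz$-SFT used in the paper, this suffices. The key point is that Corollary~\ref{HZ2} already gives $q(\overline{\Lambda}_{\Phi,X,m,n,t,b}) < 1$, and the proof of Corollary~\ref{HZ2} in fact establishes the combinatorial ``column-filling'' statement that I want to iterate.

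First I would recall that, since $t$ and $b$ are globally admissible and $X$ is a n.n. $\zz^2$-SFT with $n-m$ exceeding the filling distance, $X_{m,n,t,b}$ is nonempty and $\L(X_{m,n,t,b}) \subseteq \L(X)$; in particular every column that actually occurs in $X_{m,n,t,b}$ is globally admissible. Call such a column \emph{good}, and note that $X_{m,n,t,b}$, restricted to its good columns, is still the same $\zz$-SFT. The heart of the argument is the following \emph{one-step fill-in} claim: for any two good columns $c$ and $c'$ (thought of as configurations on $\{-1\} \times [m,n]$ and $\{1\} \times [m,n]$ respectively, with top symbol $t_0$ above and bottom symbol $b_0$ below), there is a good column $e$ on $\{0\} \times [m,n]$ with $ce, ec' \in \E_{m,n,t,b}$. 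But this is exactly the statement proved inside Corollary~\ref{HZ2}: translating to the $\zz^2$ picture, $c$ and $c'$ determine boundary configurations $\delta, \delta' \in \L_{\partial(\{0\} \times [m,n])}(X)$ with $t_0$ on top and $b_0$ on bottom, and Proposition~\ref{HZ} (applicable since $\Lambda_{\Phi,X}$ is the Gibbs specification with $q(\Lambda_{\Phi,X}) < 1$ and has a $\zz^2$-MRF supported in $X$) yields $u \in \A^{\{0\} \times [m,n]}$ with $\delta u, \delta' u \in \L(X)$; this $u$ is the desired column $e$, and it is good because $\delta u \in \L(X)$ forces $u$ to extend.

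From this one-step claim the corollary follows by a routine induction: given any good columns $c$ and $c'$, the one-step claim produces a good column $e_1$ adjacent on the right of $c$; applying it again to $e_1$ and $c'$ produces $e_2$ adjacent on the right of $e_1$; and so on, so that for every $N \ge 1$ there is an allowed word of length $N+2$ in $\L(X_{m,n,t,b})$ beginning with $c$ and ending with $c'$. In matrix terms, the $(c,c')$ entry of $A^{N+1}$ is positive for all $N \ge 1$, where $A$ is the adjacency matrix restricted to good columns; since $c,c'$ were arbitrary good columns, $A$ itself is primitive (indeed $A^2$ already has all positive entries). Hence $X_{m,n,t,b}$ is a mixing $\zz$-SFT.

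The only place requiring any real care — and the ``main obstacle'' such as it is — is the extraction of the one-step fill-in statement from Proposition~\ref{HZ}: one must make sure the translation between the column-alphabet picture of $X_{m,n,t,b}$ and the $\zz^2$-SFT picture of $X$ correctly matches ``$c,c'$ adjacent to a common $e$ in $\E_{m,n,t,b}$'' with ``$\delta,\delta'$ and a common $u$ globally admissible in $X$,'' including the role of the fixed boundary rows $t_0,b_0$ above and below the strip. This is precisely the bookkeeping already carried out in the proof of Corollary~\ref{HZ2}, so invoking that proof (rather than re-deriving it) keeps the argument short; everything else is the standard observation that a nonnegative matrix all of whose pairwise ``connectivities'' are witnessed by paths of a fixed length is primitive.
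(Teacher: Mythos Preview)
Your argument is correct and takes a genuinely different route from the paper's. The paper uses Corollary~\ref{HZ2} only as a black box: knowing $q(\overline{\Lambda}_{\Phi,X,m,n,t,b}) < 1$, it argues by contradiction, first that $X_{m,n,t,b}$ is irreducible (two nontrivial irreducible components would furnish globally admissible $\delta,\delta'$ with $d(\overline{\Lambda}^{\delta},\overline{\Lambda}^{\delta'}) = 1$) and then separately that it is aperiodic (distinct period classes would do the same). You instead extract the combinatorial ``one-step fill-in'' hidden inside the proof of Corollary~\ref{HZ2} and use it constructively to connect any two good columns by a path of length~$2$, so that $A^2$ already has all positive entries. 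Your route is more direct and yields the sharper conclusion that the index of primitivity is at most~$2$; the paper's route is tidier in that it appeals to Corollary~\ref{HZ2} only through its conclusion and to standard structure theory for $\zz$-SFTs.

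One small imprecision worth tightening: a single good column $c$ sitting at position $-1$ does not by itself determine a configuration on all of $\partial(\{0\}\times[m,n])$, since that boundary also contains the column at position $+1$. What you actually need (and what the proof of Corollary~\ref{HZ2} does) is to complete $c$ to some $\delta = (c,d,t_0,b_0)$ by choosing any $d$ with $cd \in \L(X_{m,n,t,b})$, and likewise to complete $c'$ on its left; goodness of $c$ and of $c'$ guarantees such completions exist. With that adjustment your invocation of Proposition~\ref{HZ} goes through, and the resulting $e$ is good because the left-infinite extension of $c$ and the right-infinite extension of $c'$ glue along the transitions $(c,e)$ and $(e,c')$ you have produced.
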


\begin{proof}
By Corollary~\ref{HZ2}, $q(\overline{\Lambda}_{\Phi,X,m,n,t,b}) < 1$. We prove that $X_{m,n,t,b}$ is mixing by using some well-known facts about the structure of n.n. $\zz$-SFTs. In particular, we show that $X_{m,n,t,b}$ is irreducible and aperiodic; for more details on these properties see \cite{Lind-Marcus}.

We first show that $X_{m,n,t,b}$ is irreducible. Assume for a contradiction that it is not. Then there exist at least two nontrivial irreducible components $C, D \subseteq \mathcal{C}_{m,n,t,b}$. But then clearly we have a contradiction to $q(\overline{\Lambda}_{\Phi,X,m,n,t,b}) < 1$; a boundary configuration consisting of two letters from $C$ can only be filled in a globally admissible way with a letter from $C$, and the same is true for $D$. Since $C$ and $D$ were nontrivial components, there exist such boundary configurations $\delta, \delta'$ which are globally admissible, and since $C \cap D = \varnothing$ and $\Lambda_{\Phi,X}$ is supported on $X$, $d(\overline{\Lambda}_{\Phi,X,m,n,t,b}^{\delta}, \overline{\Lambda}_{\Phi,X,m,n,t,b}^{\delta'}) = 1$. Therefore, our original assumption was wrong and $X_{m,n,t,b}$ is irreducible.

It remains to show that $X_{m,n,t,b}$ is aperiodic, which is done in the same way; suppose for a contradiction that $X_{m,n,t,b}$ can be partitioned into period classes $P_1, \ldots, P_k$, $k>1$. Then if we take $\delta$ to be a globally admissible boundary configuration in $X_{m,n,t,b}$ consisting of a letter from $P_{k}$ on the left and a letter from $P_{2}$ on the right, and $\delta'$ to be a globally admissible boundary configuration in $X_{m,n,t,b}$ consisting of a letter from $P_1$ on the left and a letter from $P_{3 \pmod k}$ on the right, then $\delta$ and $\delta'$ can only be filled with letters from $P_1$ and $P_2$ respectively. Again, since $P_1 \cap P_2 = \varnothing$, this contradicts $q(\overline{\Lambda}_{\Phi,X,m,n,t,b}) < 1$, so $X_{m,n,t,b}$ is aperiodic and irreducible, therefore mixing.

\end{proof}

The induced Gibbs state $\mu_{\Phi,X,m,n,t,b}$ can be viewed as a
measure on $(\C_{m,n,t,b})^{\zz}$ supported in $X_{m,n,t,b}$, and
when viewed in this way, it is a $\mathbb{Z}$-MRF associated to
$\overline{\Lambda}_{\Phi,X,m,n,t,b}$. We will show that when viewed
in this way $\mu_{\Phi,X,m,n,t,b}$ is a translation-invariant
irreducible 1st-order Markov chain, with a transition probability
matrix defined explicitly in terms of $\Phi$.

We first describe $\overline{\Lambda}_{\Phi,X,m,n,t,b}$ explicitly in terms of $\Phi$.
%
For a configuration $z$ on a finite set
$U \subset \zz \times \{n\}$ let
$$
\Phi_+(z,U) = \sum_{u \in U} \Phi\left(\substack{t_0\\[4pt] z_u}\right)
$$
For a configuration $z$ on a finite set $U \subset \zz \times \{m\}$ let
$$
\Phi_-(z,U) = \sum_{u \in U} \Phi\left(\substack{z_u\\[4pt] b_0}\right)
$$
Let $R=[-k,k] \times [m,n]$, $x \in \A^R$, $\delta \in \A^{
\partial(R,H_{m,n}) }$ such that $x\delta \in \L(X_{m,n,t,b})$. From
(\ref{Gibbs_spec}) and (\ref{mntbdefn}), we have:
\begin{equation}
\label{Gibbs_spec_strip} \overline{\Lambda}_{\Phi,X,m,n,t,b}^{\delta}(x) =
\frac{\exp(-A(x,\delta))}{ \sum_{\{w \in \A^R: ~w\delta \in
\L(X_{m,n,t,b})  \}}\exp(-A(w,\delta))}
\end{equation}
where
\begin{multline}\label{defnAdot}
A(z,\delta) =
\Phi(z,R) + \Phi(z\delta,E(R,R) \cup
E(R, \partial(R,H_{m,n}))  )\\  +  \Phi_+( z,R \cap (\zz \times \{n\})) + \Phi_-(z,R
\cap (\zz \times \{m\})).
\end{multline}

We claim that $\overline{\Lambda}_{\Phi,X,m,n,t,b}$ can be expressed as a
$\mathbb{Z}$-specification determined by a n.n. interaction $\overline{\Phi}_{m,n,t,b}$ on $X_{m,n,t,b}$.
We define $\overline{\Phi}_{m,n,t,b}$ to vanish on all finite configurations other than
those on edges in $\E_{m,n,t,b}$, and
on such edges, it is defined by
\begin{multline}\label{fmn}
\overline{\Phi}_{m,n,t,b}(x_m \ldots x_{n}, y_m \ldots y_{n} ) =\\
\left(\left(\sum_{i=m}^n \Phi(x_i)\right) +
\Phi\left(\substack{x_m\\[4pt] b_0}\right) + \left(\sum_{i=m}^{n-1}
\Phi\left(\substack{x_{i+1}\\[4pt] x_i}\right)\right) +
\Phi\left(\substack{t_0\\[4pt] x_n}\right) + \left(\sum_{i=m}^{n}
\Phi(x_i,y_i)\right) \right).
\end{multline}
Let $R=[-k,k] \times [m,n]$, $x \in \A^R$, $\delta \in \A^{
\partial(R,H_{m,n}) }$ such that $x\delta \in \L(X_{m,n,t,b})$.
Observe that
\begin{equation}
\label{1d}
\Lambda_{\overline{\Phi}_{m,n,t,b},X_{m,n,t,b}}^\delta(x) = \frac{\exp (- B(x,\delta))} {  \sum_{\{w \in \A^R: ~w\delta \in
\L(X_{m,n,t,b})  \}}  \exp (- B(w,\delta))                }
\end{equation}
where
\begin{multline}
\label{Bz}
B(z,\delta) = \\ \sum_{j=-k}^{k-1} \left(\overline{\Phi}_{m,n,t,b}(z|_{\{j\}\times[m,n]},z|_{\{j+1\}\times[m,n]}) +
\overline{\Phi}_{m,n,t,b}(z|_{\{k\}\times[m,n]},\delta|_{\{k+1\}\times[m,n]})\right)\\
+ \overline{\Phi}_{m,n,t,b}(\delta|_{\{-k-1\} \times [m,n]}, z|_{\{-k\} \times [m,n]})
= A(z,\delta) + C(\delta),
\end{multline}
where
\begin{multline*}
C(\delta) =
\left(\sum_{i=m}^n \Phi(\delta(-k-1,i))\right) +
\Phi\left(\substack{\delta(-k-1,m)\\[4pt] b_0}\right) \\
+ \left(\sum_{i=m}^{n-1} \Phi\left(\substack{ \delta(-k-1,i+1)  \\[4pt]   \delta(-k-1,i)  }  \right)\right) +
\Phi\left(\substack{t_0\\[4pt] \delta(-k-1,n)  }\right).
\end{multline*}
%
Comparing (\ref{Gibbs_spec_strip}), (\ref{defnAdot}), (\ref{1d}) and (\ref{Bz}), we see that
$$
\Lambda_{\overline{\Phi}_{m,n,t,b},X_{m,n,t,b}}^\delta = \overline{\Lambda}_{\Phi,X,m,n,t,b}^{\delta}
$$
for all $\delta \in \L_{\partial(R,H_{m,n})}(X_{m,n,t,b})$.
Since $\mu_{\Phi,X,m,n,t,b}$ is supported in $X_{m,n,t,b}$, it
follows that $\mu_{\Phi,X,m,n,t,b}$, when viewed as a
$\mathbb{Z}$-MRF, is a Gibbs state for the interaction
$\overline{\Phi}_{m,n,t,b}$ on $X_{m,n,t,b}$.


Let $A_{\Phi,X,m,n,t,b}$ be the square matrix indexed by $\C_{m,n,t,b}$ and defined by
\begin{equation}
\label{defnA}
(A_{\Phi,X,m,n,t,b})_{c,d} =
\begin{cases}
e^{-\overline{\Phi}_{m,n,t,b}(c,d)} & \textrm{if } (c,d) \in \E_{m,n,t,b}\\
0 & \textrm{ otherwise.}
\end{cases}
\end{equation}
We will frequently suppress the dependence of $A$ on $\Phi,X,m,n,t,b$ when it causes no confusion.

By Corollary~\ref{mixing}, we may assume (by deleting elements of $\C_{m,n,t,b}$ which do not actually
appear in $X_{m,n,t,b}$) that $A$ is a primitive 
matrix. Let $\lambda(A)$ represent the Perron (i.e., largest) eigenvalue of
$A$. By Perron-Frobenius Theory, there are unique (up to scalar multiples)
right and left (positive) eigenvectors $u = u_{\Phi,X,m,n,t,b}$ and $v = v_{\Phi,X,m,n,t,b}$
corresponding to $\lambda(A)$.

%

Let $\Pi = \Pi_{\Phi,X,m,n,t,b}$ (again, dependence will frequently be suppressed) be the
(primitive) probability transition matrix indexed by
$\C_{m,n,t,b}$ defined by
\begin{equation}
\label{Pi}
\Pi_{c,d} = \frac{A_{c,d} v(d)}{\lambda(A)v(c)}
\end{equation}

\begin{proposition}\label{Markov}
Let $\Phi$ be a translation-invariant n.n. interaction on a strongly irreducible n.n. $\zz^2$-SFT
$X$.  Let $t$ and $b$ be globally admissible constant rows and assume that
$n-m$ exceeds the filling distance of $X$.
Assume that $q(\Lambda_{\Phi,X}) <1$.
Then the induced Gibbs state $\mu_{\Phi,X,m,n,t,b}$ for $\Phi$ and $X$
is a one-dimensional translation-invariant mixing 1st-order Markov chain
with probability transition matrix $\Pi_{\Phi,X,m,n,t,b}$. 
\end{proposition}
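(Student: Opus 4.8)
The plan is to establish three things in sequence: that $\mu_{\Phi,X,m,n,t,b}$ is a first-order Markov chain, that its transition matrix is exactly $\Pi_{\Phi,X,m,n,t,b}$, and that it is translation-invariant and mixing. The starting point is the observation, already established in the text preceding the statement, that $\mu_{\Phi,X,m,n,t,b}$, viewed as a measure on $(\C_{m,n,t,b})^{\zz}$ supported in $X_{m,n,t,b}$, is a Gibbs state for the nearest-neighbor interaction $\overline{\Phi}_{m,n,t,b}$ on the one-dimensional SFT $X_{m,n,t,b}$. So the task reduces to the classical fact that a Gibbs state for a nearest-neighbor interaction on a mixing $\zz$-SFT is a first-order Markov chain, together with an identification of the transition probabilities. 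I would carry this out by hand rather than quote a black box, since the explicit matrix $A$ and the Perron data $\lambda(A), u, v$ need to appear.

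First I would use the Markov property to reduce to a finite computation: since $\mu := \mu_{\Phi,X,m,n,t,b}$ is a $\zz$-MRF associated to $\overline{\Lambda}_{\Phi,X,m,n,t,b}$, for any $c_0 \ldots c_\ell \in \L(X_{m,n,t,b})$ we have $\mu(c_\ell \mid c_0 \ldots c_{\ell-1}) = \mu(c_\ell \mid c_{\ell-1})$ already from the $\zz$-MRF property (conditioning on a one-sided past is the increasing limit of conditioning on finite pasts, each of which by the MRF property depends only on $c_{\ell-1}$); this is exactly first-order Markovianity. Then I would pin down the transition matrix. Using (\ref{Gibbs_spec_strip}) and (\ref{defnAdot}) with $R$ a long horizontal interval and computing the ratio $\mu(c_0 \ldots c_\ell)/\mu(c_0 \ldots c_{\ell-1})$, the interaction terms telescope so that $\mu(c_0 \ldots c_\ell)$ is proportional to $\prod_{j} e^{-\overline{\Phi}_{m,n,t,b}(c_j, c_{j+1})} = \prod_j A_{c_j, c_{j+1}}$ up to boundary corrections that wash out in the limit. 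Normalizing against the Perron eigenvector in the standard way — i.e. checking that $p(c) := u(c) v(c) / \langle u, v\rangle$ together with $\Pi_{c,d} = A_{c,d} v(d)/(\lambda(A) v(c))$ is the unique stationary Markov measure with the right Gibbs cocycle — gives $\mu(c_d \mid c_{d-1}) = \Pi_{c_{d-1}, c_d}$. Concretely, I would verify that the Markov measure determined by $\Pi$ and its stationary distribution $p$ satisfies the defining conditional formula (\ref{115}) for the interaction $\overline{\Phi}_{m,n,t,b}$ on $X_{m,n,t,b}$, and then invoke the uniqueness of the MRF/Gibbs state associated to $\overline{\Lambda}_{\Phi,X,m,n,t,b}$ (which holds because $q(\overline{\Lambda}_{\Phi,X,m,n,t,b}) < 1 = p_c(H_{m,n})$ via Corollary~\ref{HZ2} and Proposition~\ref{stripuniqueness}) to conclude that this Markov measure is $\mu$ itself.

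Translation-invariance is immediate — it was already noted that $\mu(\Lambda_{m,n,t,b})$ is translation-invariant as a one-dimensional measure (any horizontal shift would be another MRF associated to the same specification, contradicting uniqueness), so in particular $\mu$ is a stationary Markov chain. Mixing of the chain then follows from primitivity of $\Pi$, which is primitive precisely because $A = A_{\Phi,X,m,n,t,b}$ is primitive; and $A$ is primitive by Corollary~\ref{mixing}, which says $X_{m,n,t,b}$ is a mixing $\zz$-SFT, so that its adjacency matrix — which has the same support pattern as $A$ since every locally admissible edge in $\E_{m,n,t,b}$ carries a strictly positive entry $e^{-\overline{\Phi}_{m,n,t,b}(c,d)}$ in $A$ — is primitive after deleting letters not appearing in $X_{m,n,t,b}$.

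The main obstacle I anticipate is the bookkeeping in the middle step: carefully tracking the boundary terms $C(\delta)$ and the end corrections (the $\Phi_+$, $\Phi_-$ contributions at the top and bottom rows and the column-interaction terms at the left and right ends of $R$) when passing from finite-window conditional probabilities to the infinite-volume transition probabilities, and checking that these do not perturb the limiting ratio. This is where the identity $B(z,\delta) = A(z,\delta) + C(\delta)$ from (\ref{Bz}) does the real work: because $C(\delta)$ depends only on $\delta$ and not on $z$, it cancels in every conditional ratio, which is exactly what makes $\overline{\Lambda}_{\Phi,X,m,n,t,b}$ coincide with the $\zz$-specification determined by $\overline{\Phi}_{m,n,t,b}$ on $X_{m,n,t,b}$, and hence what lets the one-dimensional Gibbs/Markov machinery apply cleanly. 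Once that identification is in hand the rest is routine Perron--Frobenius.
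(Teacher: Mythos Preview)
Your parenthetical justification that $\mu(c_\ell \mid c_0 \ldots c_{\ell-1}) = \mu(c_\ell \mid c_{\ell-1})$ follows ``from the $\zz$-MRF property'' is not correct as stated: the boundary of $\{\ell\}$ in $\zz$ is $\{\ell-1,\ell+1\}$, so the MRF property only yields $\mu(c_\ell \mid c_{\ell-1}, c_{\ell+1}, \ldots) = \mu(c_\ell \mid c_{\ell-1}, c_{\ell+1})$. One-sided Markovianity does not drop out of two-sided conditional independence for free --- something must eliminate the dependence on the right. Your second, ``concrete'' route does close this gap: build the stationary Markov chain with transitions $\Pi$, check directly that it satisfies the specification $\overline{\Lambda}_{\Phi,X,m,n,t,b}$ (the $v$'s and $\lambda$'s cancel in every conditional ratio, leaving exactly the $A$-ratios of (\ref{1d})), and then invoke uniqueness of the associated $\zz$-MRF via Corollary~\ref{HZ2} together with $p_c(\zz)=1$ in Theorem~\ref{uniqueness}. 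That argument is sound, so the proposal is salvageable, but you should drop the first argument rather than present it as the reason the chain is Markov.

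This verify-and-invoke-uniqueness route is genuinely different from the paper's. The paper computes $\mu(x_0 \mid x_{-1},\ldots,x_{-k})$ directly, without constructing a candidate measure. The device is to introduce an auxiliary site $x_\ell$ far to the right and expand $\mu(x_0 \mid x_{-1},\ldots,x_{-k}) = \sum_{x_\ell} \mu(x_\ell \mid x_{-1},\ldots,x_{-k})\,\mu(x_0 \mid x_\ell, x_{-1},\ldots,x_{-k})$; now the conditioning set in the second factor contains the full boundary $\{-1,\ell\}$ of $[0,\ell-1]$, so the \emph{two-sided} Gibbs property applies and gives $A_{x_{-1}x_0}(A^\ell)_{x_0 x_\ell}/(A^{\ell+1})_{x_{-1}x_\ell}$. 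Letting $\ell\to\infty$ and using the Perron--Frobenius asymptotic $(A^\ell)_{c,d}/\lambda^\ell \to v_c u_d$ collapses this to $\Pi_{x_{-1},x_0}$, independently of $x_{-2},\ldots,x_{-k}$. Your approach is tidier once $\zz$-uniqueness is on the table; the paper's approach is self-contained and makes explicit the mechanism by which the one-sided conditional loses its dependence on the deeper past.
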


\begin{proof}  This is a special case of a much more general result~\cite{Georgii} (Theorem 10.25).
However, the proof in our case is much simpler, as follows.

Write $\mu = \mu_{\Phi,X,m,n,t,b}$,   $u = u_{\Phi,X,m,n,t,b}$, $v = v_{\Phi,X,m,n,t,b}$
and  $\lambda = \lambda(A_{\Phi,X,m,n,t,b})$.  We assume that $u$ and $v$ are normalized
so that $u \cdot v =1$. Fix any $k>0$.

Since $\mu$ is a Gibbs state for the interaction
$\overline{\Phi}_{m,n,t,b}$ on  $X_{m,n,t,b}$,
for any positive integers $k,\ell$ and any $x_{-k}, \ldots, x_{-1}, x_0 \in \C_{m,n,t,b}$,
$$
\mu(x_0|x_{-1}, \ldots, x_{-k}) =
\sum_{x_\ell} \mu(x_\ell |x_{-1}, \ldots, x_{-k})
\mu(x_0|x_\ell, x_{-1}, \ldots, x_{-k})
$$
$$
=
\sum_{x_\ell} \mu(x_\ell |x_{-1}, \ldots, x_{-k}) \sum_{x_1, \ldots, x_{\ell-1}}  \mu(x_0, x_1, \ldots, x_{\ell-1}|x_\ell, x_{-1}, \ldots, x_{-k})
$$
$$
=
\sum_{x_\ell} \mu(x_\ell |x_{-1}, \ldots, x_{-k}) \sum_{x_1, \ldots, x_{\ell-1}}  \mu(x_0, x_1, \ldots, x_{\ell-1}|x_\ell, x_{-1})
$$
$$
=
\sum_{x_\ell} \mu(x_\ell |x_{-1}, \ldots, x_{-k}) \sum_{x_1, \ldots, x_{\ell-1}}
\frac{A_{x_{-1}x_0} A_{x_0x_1} \cdots A_{x_{\ell-1}x_\ell}}
{ \sum_{ x_0', x_1', \ldots, x_{\ell-1}'} A_{x_{-1}x_0'} A_{x_0'x_1'} \cdots A_{x_{\ell-1}'x_\ell} }
$$
$$
=
\sum_{x_\ell} \mu(x_\ell |x_{-1}, \ldots, x_{-k}) \frac{A_{x_{-1}x_0} (A^\ell)_{x_0x_\ell}   }{ (A^{\ell+1})_{x_{-1}x_\ell}}
$$

Since $A$ is primitive, by~\cite{Lind-Marcus} (Theorem 4.5.12) we have that
$\lim_{\ell \rightarrow \infty} \frac{(A^\ell)_{c,d}}{\lambda^\ell} = v_c u_d$.
Thus, given $\epsilon >0$, for sufficiently large $\ell$,
$\mu(x_0|x_{-1}, \ldots, x_{-k})$ is within $\epsilon$ of
$$
\sum_{x_\ell} \mu(x_\ell |x_{-1}, \ldots, x_{-k})
\frac{A_{x_{-1}x_0} v_{x_0}u_{x_\ell}} {v_{x_{-1}}u_{x_\ell} \lambda}
=
\frac{A_{x_{-1}x_0} v_{x_0}} {v_{x_{-1}} \lambda}
$$
Thus, $\mu(x_0|x_{-1}, \ldots, x_{-k}) =  \frac{A_{x_{-1}x_0} v_{x_0}} {v_{x_{-1}}\lambda} = \Pi_{x_{-1}, x_0}$.
In particular, $\mu$ is a translation-invariant mixing 1st-order Markov chain.

\end{proof}

\section{Pressure and equilibrium states}\label{pressure}


We now turn to our main application of Theorem~\ref{maintheorem2}: the approximation of
certain topological pressures on strongly irreducible n.n. $\zz^2$-SFTs.

We recall that for any n.n. $\zz^d$-SFT $X$ and $f \in C(X)$,
an equilibrium state is a translation-invariant measure $\mu$ on $X$ for which
$h(\mu) + \int f \ d\mu$ is maximized, and that this maximum $P(f)$ is called the topological pressure
of $f$ on $X$. Our main tool is Theorem 4.2 from \cite{ruelle}, which
proves that any equilibrium state is a Gibbs state.

Let $X$ be a strongly irreducible n.n. $\zz^2$-SFT $X$
and $\Phi$ be a translation-invariant n.n. interaction
such that $q(\Lambda_{\Phi,X}) < p_c$. Define $\hat{\Phi}$ as in (\ref{hatphi}). Let
$f_\Phi: X \rightarrow R$ be defined by $f_{\Phi}(x) =
-\hat{\Phi}(x|_{\Delta })$. In this case, Theorem 4.2 from \cite{ruelle}
shows that any equilibrium state for $f_{\Phi}$ is a Gibbs state for
$\hat{\Phi}$. (In fact, this is the reason for defining $\hat{\Phi}$:
technically Theorem 4.2 from \cite{ruelle} applies only to interactions supported on
configurations on a single shape.) According to our Proposition~\ref{phiphihat}, any
equilibrium state for $f_{\Phi}$ is a Gibbs state for $\Phi$ as well.

Since $X$ is strongly irreducible, there exist $t,b$ globally admissible periodic
rows in $X$. We for now assume that $t,b$ are constant, and deal with the general periodic case later.
For any $n$ which exceeds the filling distance of $X$,
let $\lambda_n = \lambda(A_{\Phi,X,1,n,t,b})$.

\begin{theorem}
\label{main_Gibbs} Let $X$ be a strongly irreducible n.n. $\zz^2$-SFT and
$\Phi$ be a translation-invariant n.n. interaction on $X$ such that
$q(\Lambda_{\Phi,X}) < p_c$. Let $t$ and $b$ be globally admissible constant rows.
Then 
there exist constants $Q, R >0$ such that for 
sufficiently large $n$,
$$
|\log \lambda_{n+1} - \log \lambda_{n} - P(f_{\Phi})| < Qe^{-Rn}.
$$
\end{theorem}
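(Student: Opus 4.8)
The plan is to reduce the statement to Theorem~\ref{maintheorem} together with an explicit thermodynamic formula for $\log\lambda_n$ coming from the Markov-chain description of the induced Gibbs states. Throughout write $\mu_n := \mu_{\Phi,X,1,n,t,b}$; by Proposition~\ref{periodic} and Corollary~\ref{mixing} the matrix $A_n := A_{\Phi,X,1,n,t,b}$ is defined and primitive, and $\mu_n$ exists, for all $n$ exceeding the filling distance of $X$, so ``sufficiently large $n$'' will mean $n>L$.

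First I would pin down the equilibrium state. Since $f_\Phi = -\hat\Phi(\cdot|_\Delta)$, Theorem~4.2 of~\cite{ruelle} shows that every equilibrium state for $f_\Phi$ on $X$ is a Gibbs state for $\hat\Phi$, hence (Proposition~\ref{phiphihat}) a Gibbs state for $\Phi$, hence a $\zz^2$-MRF associated to $\Lambda_{\Phi,X}$; as $q(\Lambda_{\Phi,X}) < p_c$, Theorem~\ref{uniqueness} forces this MRF to be $\mu = \mu(\Lambda_{\Phi,X})$. Thus $P(f_\Phi) = h(\mu) + \int f_\Phi\,d\mu = h(\mu) - \int \hat\Phi(x|_\Delta)\,d\mu$. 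Next, by Proposition~\ref{Markov}, $\mu_n$ is the translation-invariant Markov chain with transition matrix $\Pi_{\Phi,X,1,n,t,b}$ of~(\ref{Pi}), whose stationary vector is $c \mapsto u(c)v(c)$ with $u,v$ normalized so that $u\cdot v = 1$. Substituting $\log \Pi_{c,d} = -\overline\Phi_{1,n,t,b}(c,d) - \log\lambda_n + \log v(d) - \log v(c)$ into the standard formula $h(\mu_n) = -\sum_{c,d} u(c)v(c)\,\Pi_{c,d}\log\Pi_{c,d}$ for the entropy of a stationary Markov chain, and cancelling the $\log v$ terms by stationarity, yields
\[
\log\lambda_n = h(\mu_n) - E_{\mu_n}\big[\overline\Phi_{1,n,t,b}\big],
\]
which is just the variational principle for the pressure of the two-block function $-\overline\Phi_{1,n,t,b}(\cdot_0,\cdot_1)$ on $X_{1,n,t,b}$.

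The key structural remark is that, comparing~(\ref{fmn}) with~(\ref{hatphi}), the per-column weight $\overline\Phi_{1,n,t,b}$ is exactly the sum of the $n$ translates of $\hat\Phi$ whose base vertex lies in a fixed column of the strip $\zz\times[1,n]$ (the topmost reaching into the constant top boundary row $t$), plus one extra term: the weight of the single vertical edge joining that column to the constant bottom boundary row $b$. Hence $E_{\mu_n}[\overline\Phi_{1,n,t,b}] = \sum_{i=1}^{n} g_n(i) + \beta_n$, where $g_m(i)$ is the $\mu_m$-expectation of the $\hat\Phi$-translate based at row $i$ (a continuous functional of the marginal of $\mu_m$ on rows $i$ and $i+1$, the top one also seeing the constant row $t$) and $\beta_m$ is the $\mu_m$-expectation of the bottom-boundary edge weight (a continuous functional of the marginal of $\mu_m$ on row $1$). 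Consequently
\[
\log\lambda_{n+1} - \log\lambda_n = \big(h(\mu_{n+1}) - h(\mu_n)\big) - \Big(\sum_{i=1}^{n+1} g_{n+1}(i) - \sum_{i=1}^{n} g_n(i) + \beta_{n+1} - \beta_n\Big).
\]
By Theorem~\ref{maintheorem} the first parenthesis is within $Qe^{-Rn}$ of $h(\mu)$. For the second, I would run exactly the bookkeeping from the proof of Theorem~\ref{maintheorem} (display~(\ref{term1})--(\ref{term4})), with $h_\mu(R_{j+1}\mid R_j)$ replaced by $g_\cdot(\cdot)$: match the bottom $\lfloor n/2\rfloor$ summands of the two sums directly using the first inequality of Theorem~\ref{dbarclose}; match the top summands after a vertical shift by one using the second inequality of Theorem~\ref{dbarclose}; dispose of the single leftover middle summand $g_{n+1}(\lfloor n/2\rfloor+1)$ using that its underlying (shifted) marginal converges to $\mu|_{R_0\cup R_1}$ in $\dbar$ at exponential rate, exactly as in Corollary~\ref{MRFentropy}, so that $g_{n+1}(\lfloor n/2\rfloor+1)\to\int\hat\Phi(x|_\Delta)\,d\mu$; and bound $|\beta_{n+1}-\beta_n|$ via the first inequality of Theorem~\ref{dbarclose} applied to row $1$. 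At each step a $\dbar$ estimate converts to an estimate on the corresponding expectation difference because $\hat\Phi$ is bounded ($\Phi$ being a finite n.n. interaction) and $\dbar$ dominates distribution distance; summing the resulting geometric series shows the second parenthesis is within $Qe^{-Rn}$ of $\int\hat\Phi(x|_\Delta)\,d\mu$. Combining the two estimates gives $|\log\lambda_{n+1}-\log\lambda_n - (h(\mu) - \int\hat\Phi(x|_\Delta)\,d\mu)| < Qe^{-Rn}$, i.e.\ $|\log\lambda_{n+1}-\log\lambda_n - P(f_\Phi)| < Qe^{-Rn}$.

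The main obstacle is this last step: one must re-prove the combinatorial core of Theorem~\ref{maintheorem} for the observable $\hat\Phi$ rather than the conditional entropy, in particular checking that the two sums of lengths $n$ and $n+1$ pair up so that all but one matched pair are exponentially close, that the lone leftover term converges to the correct limit $\int\hat\Phi(x|_\Delta)\,d\mu$, and that the bottom-boundary correction $\beta_{n+1}-\beta_n$ is negligible. The remaining ingredients — the identification of the equilibrium state as $\mu$, the Markov-chain formula for $\log\lambda_n$, and the reduction of the entropy difference to Theorem~\ref{maintheorem} — are routine bookkeeping from the earlier sections.
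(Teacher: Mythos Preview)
Your proposal is correct and follows essentially the same route as the paper: identify the unique equilibrium state $\mu$ via Ruelle's theorem and Theorem~\ref{uniqueness}, write $\log\lambda_n$ as $h(\mu_n)$ minus the $\mu_n$-expectation of $\overline{\Phi}_{1,n,t,b}$, handle the entropy difference by Theorem~\ref{maintheorem}, and control the interaction-expectation difference by the same lower-half/upper-half-shifted/middle-term splitting used there, with Theorem~\ref{dbarclose} supplying the estimates. The only cosmetic differences are that the paper obtains the formula $\log\lambda_n = h(\mu_n) + \int(-\overline{\Phi}_{1,n,t,b})\,d\mu_n$ by citing the one-dimensional equilibrium-state characterization (\cite{krieger},\cite{blascamp}) rather than your direct Markov-chain computation, and it absorbs your separate bottom-boundary correction $\beta_{n+1}-\beta_n$ into the first (lower-half) sum rather than treating it as a standalone term.
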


\begin{proof}
Let $\mu$ be an equilibrium state for $f_{\Phi}$, i.e.,  $P(f_{\Phi}) = h(\mu) +
\int f_{\Phi} d\mu$. By the discussion above, $\mu$ is a Gibbs state for $\Phi$ on $X$ and,
by Theorem~\ref{uniqueness}, is the unique $\mathbb{Z}^2$-MRF associated to the $\mathbb{Z}^2$-specification $\Lambda = \Lambda_{\Phi,X}$.

Combining Proposition~\ref{Markov} with the well-known characterization of unique equilibrium states of locally constant functions
as Markov chains~(see \cite[p. 99]{krieger}, \cite{blascamp}), we see that for any $n$ larger than the filling distance of $X$,
$\mu_{\Phi,X,1,n,t,b}$ is the unique equilibrium state
for $\overline{\Phi}_{1,n,t,b}$ on the $\zz$-SFT $X_{1,n,t,b}$, and $\log \lambda_n = P(\overline{\Phi}_{1,n,t,b}) = h(\mu_{\Phi,X,1,n,t,b}) + \int
\overline{\Phi}_{1,n,t,b} d\mu_{\Phi,X,1,n,t,b}$.

By Theorem~\ref{maintheorem} and Proposition~\ref{periodic}, there exist constants $\overline{Q},\overline{R}$ such that for sufficiently large $n$,
\[
\left| h(\mu_{\Phi,X,1,n+1,t,b}) - h(\mu_{\Phi,X,1,n,t,b}) - h(\mu)\right| < \overline{Q}e^{-\overline{R}n}.
\]
It remains to show that $\int \overline{\Phi}_{1,n+1,t,b} \ d\mu_{\Phi,X,1,n+1,t,b} - \int \overline{\Phi}_{1,n,t,b} \ d\mu_{\Phi,X,1,n,t,b}$
converges exponentially fast to $\int f_{\Phi} \ d\mu$. Recalling the definition of $\overline{\Phi}_{1,n,t,b}$ ((\ref{fmn})), we see that
$\int \overline{\Phi}_{1,n+1,t,b} d\mu_{\Phi,X,1,n+1,t,b} - \int \overline{\Phi}_{1,n,t,b} d\mu_{\Phi,X,1,n,t,b}$
can be decomposed into a sum of the form
\begin{multline}\label{phidiffdecomp}
\sum \left( \int F(x) d\mu_{\Phi,X,1,n+1,t,b} - \int F(x) d\mu_{\Phi,X,1,n,t,b}\right)\\
+ \sum \left( \int F(\sigma_{(0,1)}x')) d\mu_{\Phi,X,1,n+1,t,b} - \int F(x') d\mu_{\Phi,X,1,n,t,b}\right)\\
+ \int (f_{\Phi} \circ \sigma_{(0,-\lfloor n/2 \rfloor)}) d\mu_{\Phi,X,1,n+1,t,b},
\end{multline}
where each $x$ in the first sum is a configuration with shape a vertex or edge contained in $\{0,1\} \times
[1,\lfloor n/2 \rfloor]$, each $x'$ in the second sum is a configuration with shape a vertex or edge
contained in $\{0,1\} \times [\lfloor n/2 \rfloor, n]$, and $F(x)$ can represent any of the functions
$\Phi(x)$, $\Phi\left(\substack{x\\[4pt]b_0}\right)$, or $\Phi\left(\substack{t_0\\[4pt]x}\right)$.
(Clearly, in the latter two cases, $x$ must be a configuration on a single site contained in the
bottom or top row respectively.)

Since distribution distance is dominated by $\dbar$ distance,
by Theorem~\ref{dbarclose} there exist $K, L > 0$ such that for any
configuration $x$ with shape a vertex 
or edge contained in $\{0,1\} \times
[1,\lfloor n/2 \rfloor]$,
\begin{equation}\label{nonshiftedweakbound}
|\mu_{\Phi,X,1,n+1,t,b}(x) - \mu_{\Phi,X,1,n,t,b}(x)| \le Ke^{-Ln/2}.
\end{equation}

Similarly, for any configuration $x$ with shape a vertex or edge contained in $\{0,1\} \times [\lfloor n/2 \rfloor, n]$,
\begin{equation}\label{shiftedweakbound}
|\mu_{\Phi,X,1,n+1,t,b}(\sigma_{(0,1)}x) - \mu_{\Phi,X,1,n,t,b}(x)| \le Ke^{-Ln/2}.
\end{equation}

By (\ref{nonshiftedweakbound}) and (\ref{shiftedweakbound}), if we take $Q' = K \max_x |\Phi(x)|$, then each of the first two sums in (\ref{phidiffdecomp}) is less than $5n Q' e^{-Ln/2}$.

The third term of (\ref{phidiffdecomp}) converges to $\int f_{\Phi} d\mu$ by Proposition~\ref{weaklimit}, and is exponentially Cauchy by
(\ref{nonshiftedweakbound}) and (\ref{shiftedweakbound}). Therefore, there exists $Q''$ such that for sufficiently large $n$,
\[
\left| \int (f_{\Phi} \circ \sigma_{(0,-\lfloor n/2 \rfloor)}) d\mu_{\Phi,X,1,n+1,t,b} - \int f_{\Phi} d\mu \right| \leq Q'' e^{-Ln/2}.
\]
Therefore, (\ref{phidiffdecomp}) is exponentially close to $\int f_{\Phi} d\mu$.


\end{proof}

We now consider the general case where $t$ and $b$ are globally
admissible periodic (but not necessarily constant) rows with common period $p$ and generalize
Theorem~\ref{main_Gibbs} to this case. Since such $t$ and $b$ always exist, this
will yield a way to efficiently approximate
$P(f_{\Phi})$ for any translation-invariant n.n. interaction $\Phi$ on a strongly irreducible
n.n. $\zz^2$-SFT $X$ for which $q(\Lambda_{\Phi,X}) < p_c$.


Let $X^{[p]}$ denote $X^{[[0,p-1] \times \{0\}]}$, the $([0,p-1] \times \{0\})$-higher power code of $X$, which is a n.n. $\zz^2$-SFT over the alphabet $\A^{[p]} := \L_{[0,p-1]\times \{0\}}(X)$.
Define a
new translation-invariant n.n. interaction $\Phi^{[p]}$ on $X^{[p]}$ as
follows:
\begin{itemize}
\item
On vertices: $\Phi^{[p]}([x_0 \ldots, x_{p-1}]) = \sum_{i=0}^{p-1} \Phi(x_i)$
\item
On horizontal edges:  $\Phi^{[p]}([x_0 \ldots, x_{p-1}] [y_0 \ldots, y_{p-1}] ) =
\Phi(x_0x_1) + \ldots + \Phi(x_{p-1}y_{0})$
\item
On vertical edges: $\Phi^{[p]}\left(\substack{[x_0 \ldots, x_{p-1}]\\[4pt] [y_0 \ldots, y_{p-1}]  }\right)   = \Phi\left(\substack{x_0\\[4pt] y_0}\right) + \ldots +
\Phi\left(\substack{x_{p-1}\\[4pt] y_{p-1}}\right)$
\end{itemize}
Let $t^{[p]}, b^{[p]}$ be the constant rows $(t|_{[0,p-1]})^\zz,
(b|_{[0,p-1]})^\zz \in (\A^{[p]})^{\zz}$. Then $t^{[p]}, b^{[p]}$ are globally
admissible in $X^{[p]}$.

If $q(\Lambda_{\Phi^{[p]}, X^{[p]}}) < p_c$, we can simply apply Theorem~\ref{main_Gibbs} to
achieve exponentially converging approximations to $P(f_{\Phi^{[p]}})$, and it is fairly
easy to see that $P(f_{\Phi^{[p]}}) = pP(f_{\Phi})$, so we would be done.
However, by examining the definitions, we see that it could be the case that
$q(\Lambda_{\Phi^{[p]}, X^{[p]}}) > q(\Lambda_{\Phi,X})$, and so we must take a more circuitous route.

%

As earlier, we can define $\overline{\Phi^{[p]}}_{m,n,t^{[p]},b^{[p]}}$ on $(X^{[p]})_{m,n,t^{[p]},b^{[p]}}$
and the corresponding matrix $A_{\Phi^{[p]},X^{[p]},m,n,t^{[p]},b^{[p]}}$. For fixed $X,\Phi,t,b,p$ and any
$n$ greater than the filling distance of $X^{[p]}$, we make the notation
$A^{[p]}_n := A_{\Phi^{[p]},X^{[p]},1,n,t^{[p]},b^{[p]}}$ and $\lambda^{[p]}_n := \lambda(A_n^{[p]})$, the largest
eigenvalue of $A^{[p]}$.

\begin{theorem}
\label{main_Gibbs2}
Let $X$ be a strongly irreducible
n.n. $\zz^2$-SFT and $\Phi$ a translation-invariant
n.n. interaction on $X$. Let $t$ and $b$ be globally
admissible periodic rows, with common period $p$. Assume
that $q(\Lambda_{\Phi,X}) < p_c$.
Then
there exist constants $Q, R >0$ such that for sufficiently large $n$,
$$
|(1/p)(\log \lambda^{[p]}_{n+1} - \log \lambda^{[p]}_{n}) - P(f_{\Phi})| < Qe^{-Rn}.
$$
\end{theorem}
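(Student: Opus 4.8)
The plan is to reduce to the constant-boundary case, Theorem~\ref{main_Gibbs}, by passing to the $p$-higher power code $X^{[p]}$ with the interaction $\Phi^{[p]}$ and the \emph{constant} rows $t^{[p]},b^{[p]}$. The obstruction already flagged above is that one may have $q(\Lambda_{\Phi^{[p]},X^{[p]}}) \ge p_c$, so Theorem~\ref{main_Gibbs} cannot be applied verbatim to $X^{[p]}$. The strategy is therefore to keep the original specification $\Lambda_{\Phi,X}$, which \emph{does} satisfy $q(\Lambda_{\Phi,X}) < p_c$, in play for every ingredient that genuinely needs the percolation threshold, and to use $\Phi^{[p]},X^{[p]}$ only for those steps where $q < 1$ suffices.

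The first step is a lemma: $q(\Lambda_{\Phi^{[p]},X^{[p]}}) < 1$. Since $\Lambda_{\Phi^{[p]},X^{[p]}}$ is Gibbs, this reduces to showing that for a site $v$ of $X^{[p]}$ (a $1\times p$ window $R$ in $X$) and any two configurations $\delta,\delta'$ on the four neighboring windows that are globally admissible in $X^{[p]}$, there is a single block $a \in \A^R$ with $a\delta$ and $a\delta'$ both globally admissible in $X^{[p]}$. Restricting $\delta,\delta'$ to $\partial R$ (which is contained in the union of those four windows) and applying Proposition~\ref{HZ} to $\Lambda_{\Phi,X}$ — legitimate since $q(\Lambda_{\Phi,X}) < 1$ and $X$ is strongly irreducible — produces such an $a$ compatible with $\delta|_{\partial R}$ and $\delta'|_{\partial R}$; since $\partial R$ contains every $\zz^2$-neighbor of $R$ lying outside $R$ and $X$ is a nearest-neighbor SFT, overwriting the $R$-coordinates of a configuration of $X$ extending $\delta$ (resp.\ $\delta'$) by $a$ still yields a configuration of $X$, which upgrades local compatibility to global admissibility. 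As higher power codes preserve strong irreducibility, Corollary~\ref{mixing}, Proposition~\ref{periodic} (via Proposition~\ref{stripvalidity}), and Proposition~\ref{Markov} now all apply to $\Phi^{[p]},X^{[p]},t^{[p]},b^{[p]}$: for $n$ exceeding the filling distance of $X^{[p]}$, $A^{[p]}_n$ is primitive, $\mu_{\Phi^{[p]},X^{[p]},1,n,t^{[p]},b^{[p]}}$ exists and is a mixing first-order Markov chain, and, exactly as in the proof of Theorem~\ref{main_Gibbs}, $\log\lambda^{[p]}_n = h\big(\mu_{\Phi^{[p]},X^{[p]},1,n,t^{[p]},b^{[p]}}\big) + \int \overline{\Phi^{[p]}}_{1,n,t^{[p]},b^{[p]}}\,d\mu_{\Phi^{[p]},X^{[p]},1,n,t^{[p]},b^{[p]}}$.

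Next I would identify $\mu_{\Phi^{[p]},X^{[p]},1,n,t^{[p]},b^{[p]}}$ with $(\mu_{\Phi,X,1,n,t,b})^{[p]}$: both are $\zz$-MRFs over the alphabet of admissible $n\times p$ rectangles associated to the $\zz$-specification obtained by grouping $p$ consecutive columns of $\overline{\Lambda}_{\Phi,X,1,n,t,b}$ (a direct comparison of conditional probabilities using (\ref{Gibbs_spec_strip}) and (\ref{fmn})), and that MRF is unique by mixing; here Proposition~\ref{periodic} and Proposition~\ref{stripuniqueness} guarantee $\mu_{\Phi,X,1,n,t,b}$ exists for $n$ past the filling distance $L$, and $(\mu_{\Phi,X,1,n,t,b})^{[p]}$ is translation-invariant because $t,b$ have period $p$. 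Write $\mu = \mu(\Lambda_{\Phi,X})$, which by Ruelle's Theorem 4.2, Proposition~\ref{phiphihat}, and Theorem~\ref{uniqueness} is the unique equilibrium state for $f_\Phi$, so $P(f_\Phi) = h(\mu) + \int f_\Phi\,d\mu$. For $n > L$, the difference $\log\lambda^{[p]}_{n+1} - \log\lambda^{[p]}_n$ then splits as an entropy difference $h\big((\mu_{\Phi,X,1,n+1,t,b})^{[p]}\big) - h\big((\mu_{\Phi,X,1,n,t,b})^{[p]}\big)$ plus an interaction-integral difference $\int \overline{\Phi^{[p]}}_{1,n+1,t^{[p]},b^{[p]}}\,d(\mu_{\Phi,X,1,n+1,t,b})^{[p]} - \int \overline{\Phi^{[p]}}_{1,n,t^{[p]},b^{[p]}}\,d(\mu_{\Phi,X,1,n,t,b})^{[p]}$. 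Dividing by $p$: the entropy part converges exponentially to $h(\mu)$ by Theorem~\ref{maintheorem2} applied to $\Lambda_{\Phi,X}$ with periodic rows $t,b$ (compatibility for $n > L$ coming from Proposition~\ref{periodic}); and the interaction part converges exponentially to $\int f_\Phi\,d\mu$ by the decomposition used in the proof of Theorem~\ref{main_Gibbs} — break $\overline{\Phi^{[p]}}$ into its constituent vertex, edge, and boundary $\Phi$-terms, sort them into a bottom half supported in $\{0,1\}\times[1,\lfloor n/2\rfloor]$, a top half supported in $\{0,1\}\times[\lfloor n/2\rfloor,n]$, and one middle term, bound the two halves by the distribution-distance estimates flowing from the $\dbar$-bound of Theorem~\ref{dbarclose} (again for $\Lambda_{\Phi,X}$, so $q < p_c$ is available), and handle the middle term via Proposition~\ref{weaklimit}. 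Summing gives $(1/p)(\log\lambda^{[p]}_{n+1} - \log\lambda^{[p]}_n) \to h(\mu) + \int f_\Phi\,d\mu = P(f_\Phi)$ exponentially fast.

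The main obstacle is precisely navigating the $q(\Lambda_{\Phi^{[p]},X^{[p]}})$ mismatch: every appeal to the percolation threshold (Theorems~\ref{maintheorem2} and~\ref{dbarclose}) must be routed through the original specification $\Lambda_{\Phi,X}$, while the eigenvalue and Markov-chain bookkeeping must live in the higher power code, and one must verify that the two pictures glue — concretely, the identity $(\mu_{\Phi,X,1,n,t,b})^{[p]} = \mu_{\Phi^{[p]},X^{[p]},1,n,t^{[p]},b^{[p]}}$ together with the accounting of the factor $p$ relating $\overline{\Phi^{[p]}}_{1,n,t^{[p]},b^{[p]}}$-integrals to $f_\Phi$-integrals. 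Everything else is a transcription of the constant-boundary arguments.
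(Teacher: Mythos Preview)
Your proposal is correct and follows essentially the same route as the paper: show $q(\Lambda_{\Phi^{[p]},X^{[p]}}) < 1$ via Proposition~\ref{HZ}, so that Propositions~\ref{stripuniqueness} and~\ref{Markov} apply to the higher power code and give the variational formula for $\log\lambda^{[p]}_n$; identify $\mu_{\Phi^{[p]},X^{[p]},1,n,t^{[p]},b^{[p]}}$ with $(\mu_{\Phi,X,1,n,t,b})^{[p]}$; then handle the entropy difference by Theorem~\ref{maintheorem2} and the interaction-integral difference by the decomposition from Theorem~\ref{main_Gibbs}, both applied to the original $\Lambda_{\Phi,X}$ where $q < p_c$ is available. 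You have correctly isolated the key point --- routing every use of the percolation bound through $\Lambda_{\Phi,X}$ while keeping the eigenvalue bookkeeping in $X^{[p]}$ --- and your explicit mention that higher power codes preserve strong irreducibility is a detail the paper leaves implicit.
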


\begin{proof}
Define $\Lambda^{[p]} = \Lambda_{\Phi^{[p]},X^{[p]}}$. We will now
verify that $q(\Lambda^{[p]}) < 1$. Any boundary configurations
$\delta^{[p]},\delta'^{[p]} \in \L_{N_{(0,0)}}(X^{[p]})$ correspond to configurations
$\delta, \delta' \in \L_{\partial([0,p-1] \times \{0\})}(X)$. By Proposition~\ref{HZ}
(which we may use because $q(\Lambda_{\Phi,X}) < p_c < 1$),
any such $\delta,\delta'$ have a common globally admissible filling in $X$, which implies that
$\delta^{[p]},\delta'^{[p]}$ have a common globally admissible filling in $X^{[p]}$,
which has positive $(\Lambda^{[p]})^{\delta^{[p]}}$, $(\Lambda^{[p]})^{\delta'^{[p]}}$-values
since $\Lambda^{[p]}$ is a Gibbs specification.

Therefore, by Proposition~\ref{stripuniqueness}, for large enough $n$,
there is a unique induced Gibbs state $\mu_{\Phi^{[p]},X^{[p]},1,n,t^{[p]},b^{[p]}}$ for $X^{[p]}$ and $\Phi^{[p]}$.
By again using Proposition~\ref{Markov} and (\cite[p. 99]{krieger}, \cite{blascamp}),
$\mu_{\Phi^{[p]},X^{[p]},1,n,t^{[p]},b^{[p]}}$ is the unique equilibrium state
for $\overline{\Phi^{[p]}}_{1,n,t^{[p]},b^{[p]}}$ on
$(X^{[p]})_{1,n,t^{[p]},b^{[p]}}$ and
\begin{multline} \label{lp}
\log \lambda^{[p]}_n =
P(\overline{\Phi^{[p]}}_{1,n,t^{[p]},b^{[p]}}) =
h(\mu_{\Phi^{[p]},X^{[p]},1,n,t^{[p]},b^{[p]}}) \\+ \int
\overline{\Phi^{[p]}}_{1,n,t^{[p]},b^{[p]}} d\mu_{\Phi^{[p]},X^{[p]},1,n,t^{[p]},b^{[p]}}.
\end{multline}

We now must show that the differences of the right-hand sides of (\ref{lp}) for $n$ and $n+1$ in fact converge exponentially fast to $pP(f_{\Phi})$, which is done in much the same way as in the proof of Theorem~\ref{main_Gibbs}. We again take an equilibrium state $\mu$ for $f_{\Phi}$ and $X$,
which is a Gibbs state for $\Phi$ on $X$ and is the unique $\mathbb{Z}^2$-MRF associated to the
$\mathbb{Z}^2$-specification $\Lambda_{\Phi,X}$. Since $q(\Lambda_{\Phi,X}) < 1$, we may define the unique MRF
$\mu_{\Phi,X,1,n,t,b}$ associated to $\Lambda_{\Phi,X,1,n,t,b}$.

We claim that
$\mu_{\Phi^{[p]},X^{[p]},1,n,t^{[p]},b^{[p]}} = (\mu_{\Phi,X,1,n,t,b})^{[p]}$.
This follows from Proposition~\ref{stripuniqueness} and the fact that $q(\Lambda^{[p]}) <1$
once one verifies that $(\mu_{\Phi,X,1,n,t,b})^{[p]}$ is associated to
$(\Lambda^{[p]})_{1,n,t^{[p]},b^{[p]}} = \Lambda_{\Phi^{[p]},X^{[p]},1,n,t^{[p]},b^{[p]}}$.
This is straightforward (but a bit tedious), and we leave the details to the reader. Thus, (\ref{lp}) becomes
$$
\log \lambda^{[p]}_n =
h(({\mu_{\Phi,X,1,n,t,b}})^{[p]}) + \int \overline{\Phi^{[p]}}_{1,n,t^{[p]},b^{[p]}}
d(({\mu_{\Phi,X,1,n,t,b}})^{[p]}).
$$
By Theorem~\ref{maintheorem2}, $h(({\mu_{\Phi,X,1,n+1,t,b}})^{[p]}) -
h(({\mu_{\Phi,X,1,n,t,b}})^{[p]})$ converges exponentially to
$ph(\mu)$.  Arguing the same way as in the proof of
Theorem~\ref{main_Gibbs}, we see that
\[
\int \overline{\Phi^{[p]}}_{1,n+1,t^{[p]},b^{[p]}} d(({\mu_{\Phi,X,1,n+1,t,b}})^{[p]}) -
\int \overline{\Phi^{[p]}}_{1,n,t^{[p]},b^{[p]}} d(({\mu_{\Phi,X,1,n,t,b}})^{[p]})
\]
converges exponentially to $\int f_{\Phi^{[p]}} d\mu = \int \sum_{i=0}^{p-1} (f_{\Phi} \circ \sigma_{(i,0)}) d\mu = p \int f_{\Phi} d\mu$, where the latter equality comes from translation-invariance of $\mu$.

Thus, $(1/p)(\log \lambda^{[p]}_{n+1} -   \log \lambda^{[p]}_{n})$
converges exponentially to $P(f_{\Phi})$, as desired.

\end{proof}

We will make a brief aside here to consider the utility of this theorem. For a n.n. $\zz^2$-SFT $X$ and translation-invariant n.n. interaction $\Phi$ with associated Gibbs specification $\Lambda_{\Phi,X}$, recall that $\Lambda^{\delta}_{\Phi,X}$ is defined by a formula involving $X$ and $\Phi$ for every $\delta \in \L_{N_{(0,0)}}(X)$, and then for any other $\delta' \in \A^{N_{(0,0)}}$, $\Lambda^{\delta'}_{\Phi,X}$ is just defined to match one of the existing $\Lambda^{\delta}_{\Phi,X}$. This means that $q(\Lambda_{\Phi,X})$ is, in reality, a minimum variational distance between $\Lambda^{\delta}_{\Phi,X}$ and $\Lambda^{\delta'}_{\Phi,X}$ for globally admissible $\delta, \delta'$ only.

It is well known (\cite{berger}) that checking whether or not a given configuration is globally admissible in a n.n. $\zz^2$-SFT can be undecidable, and so certainly algorithmically impossible. However, it is shown in Corollary 3.5 of \cite{HM} that for a strongly irreducible n.n. $\zz^2$-SFT, global admissibility of a
configuration is algorithmically checkable.

In practice however, this checking process can be very time-consuming, and so it is often easier to consider a ``simpler'' version of $q(\Lambda_{\Phi,X})$.
Assume that $\Phi$ is defined on all of $\A \cup \E_1 \cup \E_2$.
Say that a configuration $\delta \in \A^{N_{(0,0)}}$ is {\em fillable} if there exists
$x\in \A^{\{(0,0)\}}$ such that $x\delta$ is locally admissible.  For fillable $\delta$,
the formula (\ref{Gibbs_spec}) makes sense and we can define
$$
\widehat{q}(\Lambda_{\Phi,X})=
\max d(\Lambda_{\Phi,X}^{\delta}, \Lambda_{\Phi,X}^{\delta'})
$$
where the $\max$ is taken over only fillable configurations
$\delta, \delta' \in \A^{N_{(0,0)}}$.
Then $q(\Lambda_{\Phi,X}) \leq \widehat{q}(\Lambda_{\Phi,X})$, and so if
$ \widehat{q}(\Lambda_{\Phi,X}) < p_c$, $q(\Lambda_{\Phi,X}) < p_c$ as well. Since computing
 $\widehat{q}(\Lambda_{\Phi,X})$
 only requires finding the set of locally admissible configurations in $X$ with shape $\{(0,0)\} \cup N_{(0,0)}$, it is a far easier quantity to find.

%
%

\section{Computability}\label{computability}

We now address the issue of how efficiently our methods can be used to approximate $P(f_{\Phi})$ for a function $f_{\Phi}$ induced by a n.n. interaction $\Phi$ on a strongly irreducible n.n. SFT $X$.

We define $\alpha \in \mathbb{R}$ to be a \textit{computable number} if there exists a Turing machine $T$ which, on input $n$, outputs a number $\frac{p_n}{q_n} \in \mathbb{Q}$ such that $\displaystyle \Big|\alpha - \frac{p_n}{q_n}\Big| < 2^{-n}$. For any sequence of positive integers $\{r_n\}$, we say that $\alpha$ is $\{r_n\}$-computable if there exists such a Turing machine $T$ which computes $\frac{p_n}{q_n}$ in less than $r_n$ steps for all sufficiently large $n$. (For more information on computability theory, see \cite{Ko}.)

We say that $\Phi$ is $\{r_n\}$-computable if each value $\Phi(w)$ in the range of $\Phi$ is $\{r_n\}$-computable.

\begin{theorem}\label{pressurecomputability}
For any $\{r_n\}$-computable $\Phi$ and strongly irreducible n.n. $\zz^2$-SFT $X$ for which $q(\Lambda_{\Phi,X}) < p_c$, there exist constants $B$, $C$, and $J$ such that $P(f_{\Phi})$ is $\{J^n + B r_{Cn}\}$-computable.
\end{theorem}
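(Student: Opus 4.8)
The plan is to reduce the computation of $P(f_\Phi)$, via Theorem~\ref{main_Gibbs2}, to the computation of the eigenvalues $\lambda^{[p]}_n$ of the explicit finite matrices $A^{[p]}_n$, and then to track carefully how many Turing-machine steps are needed to get a given accuracy. The first step is to choose $n = n(k)$ growing linearly in $k$: by Theorem~\ref{main_Gibbs2} there are constants $Q,R>0$ with $|(1/p)(\log\lambda^{[p]}_{n+1} - \log\lambda^{[p]}_n) - P(f_\Phi)| < Qe^{-Rn}$, so to achieve truncation error below $2^{-k-1}$ it suffices to take $n$ of order $k$; write $n = C_0 k$ for a suitable constant $C_0$ absorbing $Q$, $R$, $p$, and $\log 2$. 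The remaining task is then to approximate $(1/p)(\log\lambda^{[p]}_{n+1} - \log\lambda^{[p]}_n)$ to within $2^{-k-1}$ in a bounded number of steps.

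Next I would account for the cost of building the matrices. The alphabet $\C^{[p]}_{1,n,t^{[p]},b^{[p]}}$ of locally admissible columns of height $n$ over $\A^{[p]}$ has size at most $|\A|^{pn}$, so $A^{[p]}_n$ is a matrix of dimension at most $J_0^n$ for a constant $J_0$; enumerating its index set and checking the n.n.\ adjacency rules $\E^{[p]}_{1,n,t^{[p]},b^{[p]}}$ to determine which entries are nonzero costs $J_1^n$ steps for a constant $J_1$. Each nonzero entry is $e^{-\overline{\Phi^{[p]}}_{1,n,t^{[p]},b^{[p]}}(c,d)}$, and by~(\ref{fmn}) and the definition of $\Phi^{[p]}$ the exponent is a sum of $O(pn)$ values of $\Phi$; since each such value is $\{r_m\}$-computable, one obtains each matrix entry to accuracy $2^{-m}$ in $O(pn \cdot r_m)$ steps plus polynomial overhead for the arithmetic and for exponentiating. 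To reach final accuracy $2^{-k}$ in $\log\lambda^{[p]}_{n+1} - \log\lambda^{[p]}_n$, standard perturbation bounds for the Perron eigenvalue of a primitive nonnegative matrix (e.g.\ via the power method, whose convergence rate is controlled by the subdominant eigenvalue ratio, which for a primitive matrix of size $N$ with entries bounded away from $0$ and $\infty$ is bounded by $1 - c^{-N}$ for some constant $c$) show that it suffices to know each entry of $A^{[p]}_n$ and $A^{[p]}_{n+1}$ to accuracy $2^{-D n}$ for a suitable constant $D$, and to run the iteration for $c_1^n$ steps. Thus the per-entry precision needed is $m = Dn = DC_0 k$, so the entry computations cost $O(pn \cdot r_{DC_0 k})$ each, and there are at most $J_0^{2n}$ entries.

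Assembling these estimates: the total number of steps is bounded by (number of entries) $\times$ (cost per entry) $+$ (cost of the eigenvalue iteration) $+$ (cost of the final logarithm and subtraction), which is at most $J_2^n + J_2^n \cdot B_0 n\, r_{DC_0 k} + c_1^n + \mathrm{poly}(n)$ for constants $J_2$, $B_0$, $c_1$. Since $n = C_0 k$, every purely exponential term is of the form $J^k$ for a constant $J$ (taking $J = \max(J_2^{C_0}, c_1^{C_0})$ and enlarging it to swallow the polynomial factors), and the term involving $\Phi$'s computability is $B\, r_{Cn} = B\, r_{C C_0 k}$ after renaming $C := C C_0$ and folding the $B_0 n\, J_2^n$ factor into a constant $B$ (legitimate because $n J_2^n \le J^n$ for a slightly larger $J$, and we can split the single bound into $J^k + B r_{Ck}$). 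This yields $\{J^k + B r_{Ck}\}$-computability of $P(f_\Phi)$, as claimed (with $k$ playing the role of the $n$ in the statement).

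**The main obstacle** I anticipate is the quantitative perturbation/conditioning analysis: one must show that a given target accuracy for $\log\lambda^{[p]}_{n+1} - \log\lambda^{[p]}_n$ translates into only \emph{exponentially-in-$n$} (hence linearly-in-$k$, in the exponent) bits of precision for the matrix entries and only exponentially many iteration steps, uniformly over $n$. The delicate points are (i) a uniform lower bound on $\lambda^{[p]}_n$ (so that $\log$ is well-conditioned), which follows from primitivity and the fact that the nonzero entries of $A^{[p]}_n$ are bounded below by $e^{-O(pn)\max|\Phi|}$; (ii) a spectral-gap bound for the primitive matrix $A^{[p]}_n$, controlling the mixing rate of $\Pi_{\Phi^{[p]},X^{[p]},1,n,t^{[p]},b^{[p]}}$ and hence the power-method convergence; and (iii) verifying that the ``sufficiently large $n$'' threshold in Theorem~\ref{main_Gibbs2}, and the constants $Q,R$, can themselves be taken to depend only on $X$, $\Phi$, $p$ (which they can, since they come from the percolation constants $K(q),L(q)$ of Theorem~\ref{vdBM} and from $\max|\Phi|$). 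Everything else — enumerating columns, checking adjacency, summing $O(pn)$ computable reals, exponentiating, and computing a logarithm — is routine bookkeeping whose cost is dominated by the bounds above.
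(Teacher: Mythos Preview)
Your overall strategy matches the paper's: invoke Theorem~\ref{main_Gibbs2} to reduce to computing $\lambda^{[p]}_n$ for $n$ linear in the target precision $k$, approximate the matrix entries, then approximate the Perron eigenvalue. The paper computes the eigenvalue via the averages $f_M = \frac{1}{M}\log\sum_{c,d}(\widetilde A^M)_{c,d}$ together with the index-of-primitivity bound, whereas you invoke the power method and a spectral-gap estimate; either can be made to work, though note that your parenthetical ``entries bounded away from $0$'' is false here ($A^{[p]}_n$ has many zero entries), so you would need to pass first to the primitivity power.

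There is, however, a genuine gap in your cost bookkeeping. You obtain a term $J_2^n \cdot B_0 n\, r_{D C_0 k}$ (number of matrix entries times cost per entry) and then assert that this ``splits'' as $J^k + B\, r_{Ck}$. That step is false: a product of an exponential in $k$ with $r_{Ck}$ cannot in general be bounded by a sum $J^k + B\, r_{Ck}$ with $B$ constant (take $r_m = m$, for instance). The paper avoids this by exploiting the fact that a translation-invariant n.n.\ interaction $\Phi$ takes only a \emph{fixed finite number} $B$ of distinct nonzero values (at most $|\A| + |\E_1| + |\E_2|$). One computes each of these $B$ values once, to the required precision, at total cost $B\, r_{Cn}$; the matrix is then assembled from these precomputed numbers by summing and exponentiating, which costs $F^n$ with no further calls to the $\Phi$-oracle. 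This is exactly what produces the additive form $J^n + B\, r_{Cn}$ in the statement. Your per-entry recomputation of $\Phi$ values is the source of the problem; once you precompute, the rest of your outline goes through.
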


\begin{proof}
We will not include every detail of the argument, but just describe the algorithm for approximating $P(f_{\Phi})$ and summarize the most computationally intensive steps. For a similar argument with more details included, see \cite{Pa}.

Some preprocessing must be done before any approximations. Firstly, a globally admissible periodic row $t$ for $X$ must be found; a careful reading of the proof in \cite{ward} shows that this can be done algorithmically. Denote by $p$ the period of $t$. Also, we invest a finite number of steps to find explicit integers $L$ and $U$ which bound all $\Phi(w)$ from below and above respectively. This finite amount of computation is negligible compared to the computation times in the theorem, and so we may safely ignore it.

Then, by Theorem~\ref{main_Gibbs2}, there exists $R>0$ such that for sufficiently large $n$,
\[
\left|1/p\left(\log \lambda(A_{\Phi^{[p]},X^{[p]},1,n+1,t^{[p]},t^{[p]}}) - \log \lambda(A_{\Phi^{[p]},X^{[p]},1,n,t^{[p]},t^{[p]}})\right) - P(f_{\Phi})\right| < 0.2e^{-Rn}.
\]
To approximate $P(f_{\Phi})$ to within $2^{-n}$, it then clearly suffices to approximate \newline $\lambda(A_{\Phi^{[p]},X^{[p]},1,k,t^{[p]},t^{[p]}})$ and $\lambda(A_{\Phi^{[p]},X^{[p]},1,k+1,t^{[p]},t^{[p]}})$ to within $0.4 \cdot 2^{-n}$, where $k = n \lceil \frac{\log 2}{R} \rceil$. It obviously suffices to describe the procedure for $\lambda(A_{\Phi^{[p]},X^{[p]},1,k,t^{[p]},t^{[p]}})$. We from now on refer to $A_{\Phi^{[p]},X^{[p]},1,k,t^{[p]},t^{[p]}}$ simply as $A$ for ease of reading.

The entries of $A$, indexed by legal columns $c$ and $d$, are all of the form $\displaystyle e^{-\sum \Phi(w)}$, where the sum is always over a set of at most $4kp$ configurations which are easily computed in polynomial (negligible) time in $k$, given $t$, $c$, and $d$. Therefore, the smallest nonzero entry of $A$ is at least $e^{-4kpU}$ and the largest entry is at most $e^{-4kpL}$. We now wish to approximate each entry of $A$ to within a tolerance of $0.2 |\A|^{-kp} e^{-4kpU} e^{8kpL} 2^{-n}$. We first approximate each individual $\Phi(w)$ to within
$\frac{1}{40kp} |\A|^{-kp} e^{-4kpU} e^{8kpL} 2^{-n}$. Since $k$ is linear in $n$, this expression is only exponentially small in $n$. Since each $\Phi(w)$ is $\{r_n\}$-computable, there exists $C$ so that such an approximation can be found for each $\Phi(w)$ in fewer than $r_{Cn}$ steps for sufficiently large $n$, and so a collection of such approximations for all $\Phi(w)$ can be found in fewer than $Br_{Cn}$ steps, where $B$ is the constant number of configurations $w$ for which $\Phi(w) \neq 0$. For each entry
$A_{c,d} = e^{-\sum \Phi(w)}$ of $A$, we then have an approximation to $-\sum \Phi(w)$ within a tolerance of $0.1 |\A|^{-kp} e^{-4kpU} e^{8kpL} 2^{-n}$, which yields an approximation to $A_{c,d} = e^{-\sum \Phi(w)}$ to within a tolerance of $0.2 |\A|^{-kp} e^{-4kpU} e^{4kpL} 2^{-n}$ for large enough $n$ since $A_{c,d} < e^{-4kpL}$.

We then have a matrix $\widetilde{A}$ in which each entry is within $0.2 |\A|^{-kp} e^{-4kpU} e^{4kpL} 2^{-n}$ of the corresponding entry of $A$. Since this matrix has only exponentially many entries (the size of $A = A_{\Phi^{[p]},X^{[p]},1,k,t^{[p]},t^{[p]}}$ is at most the size of the alphabet of $X^{[p]}_{1,k,t^{[p]},t^{[p]}}$, which is at most $|\A|^{kp}$), and since each approximation only involves summing previously recorded approximations to $\Phi(w)$ and exponentiating, there exists $F$ so that $\widetilde{A}$ can be computed in fewer than $F^n + Br_{Cn}$ steps for sufficiently large $n$.

Then $(1 - 0.2 |\A|^{-kp} e^{4kpL} 2^{-n}) A < \widetilde{A} < (1+0.2 |\A|^{-kp} e^{4kpL} 2^{-n}) A$, and by monotonicity of the Perron eigenvalue, $(1-0.2 |\A|^{-kp} e^{4kpL} 2^{-n}) \lambda(A) < \lambda(\widetilde{A}) < (1+0.2 |\A|^{-kp} e^{4kpL} 2^{-n}) \lambda(A)$. Since clearly $\lambda(A)$ is bounded from above by the maximum row sum of $A$, which itself is less than $|\A|^{kp} e^{-4kpL}$, this means that $|\lambda(\widetilde{A}) - \lambda(A)| < 0.2 \cdot 2^{-n}$.

All that remains is to approximate $\lambda(\widetilde{A})$ to within a tolerance of $0.2 \cdot 2^{-n}$. Since $\widetilde{A}$ and $A$ have the same nonzero entries, and since $A$ is primitive, there exists $N = N(k)$ such that $\widetilde{A}^N$ has all positive entries. We assume that $N$ is the smallest such integer, which is called the index of primitivity of $A$. It is well-known (\cite{HJ}, Corollary 8.5.9) that the index of primitivity is at most quadratic in the size of the matrix, so there exists $G$ independent of $n$ so that $N < G^k$. Clearly the smallest entry of $\widetilde{A}^N$, call it $\epsilon$, is at least $(e^{-5kpU})^N$. (We changed $4kpU$ to $5kpU$ to account for the fact that the smallest entry of $\widetilde{A}$ could be slightly smaller than the smallest entry of $A$.)

The reader can verify that for any $M$ and $k$,
\[
\Big(\epsilon \sum (\widetilde{A}^M)_{c,d}\Big)^k < \sum (\widetilde{A}^{k(M+N)})_{c,d} < \Big(\sum (\widetilde{A}^{M+N})_{c,d}\Big)^k.
\]
By taking logs, dividing by $k(M+N)$, and letting $k \rightarrow \infty$, we see that
\[
\frac{\log \epsilon}{M+N} + \frac{\log \sum (\widetilde{A}^M)_{c,d}}{M+N} < \lambda(\widetilde{A}) < \frac{\log \sum (\widetilde{A}^{M+N})_{c,d}}{M+N}.
\]
If we denote $\displaystyle f_M = \frac{\log \sum (\widetilde{A}^M)_{c,d}}{M}$, then for every $M>N$,
\[
\lambda(\widetilde{A}) < f_M < \lambda(\widetilde{A}) + \frac{N\lambda(\widetilde{A})}{M} - \frac{\log \epsilon}{M}.
\]
For $f_M$ to approximate $\Lambda(\widetilde{A})$ to within $0.2 \cdot 2^{-n}$, it is therefore sufficient to take $M > 5 \cdot 2^n (N\lambda(\widetilde{A}) - \log \epsilon)$, which is less than $H^n$ for some constant $H$ and large enough $n$. The calculation of $f_{H^n}$ entails taking an exponentially large power of an exponentially large matrix,
which can be done in exponentially many computations. Therefore, there exists $I$ so that $\lambda(\widetilde{A})$ can be approximated to within $0.2 \cdot 2^{-n}$ in fewer than $I^n$ computations.

By collecting all of these facts and taking $J = \max(I,F) + 1$, we see that for sufficiently large $n$, we may approximate $P(f_{\Phi})$ to within $2^{-n}$ by performing fewer than $J^n + B r_{Cn}$ steps for some uniform constants $B$, $C$, and $J$.

\end{proof}

\section{Examples}\label{examples}

We now give some applications of the results from Sections~\ref{pressure} and \ref{computability} to specific Gibbs states and pressures, beginning with the hard-core and Ising antiferromagnetic models presented earlier. We note that these models have strongly irreducible underlying SFTs (the hard square shift and full shift on $\pm 1$ respectively), and so checking whether our results apply to them boils down to checking which parameter values give $q(\Lambda_{\Phi,X}) < p_c$. 

\medskip

1. Hard-core model: we wish to know which activity levels $a$ give $q(\Lambda_{\Phi,X}) < p_c$. By the definition of $\Phi$, it is easy to check that for $\delta = 0^{N_{(0,0)}}$,
\[
\Lambda^{\delta}_{\Phi,\H}(b) =
\begin{cases}
\frac{1}{1+a} & \textrm{if } b = 0\\
\frac{a}{1+a} & \textrm{if } b = 1,
\end{cases}
\]
which we abbreviate by $\Lambda^{\delta}_{\Phi,\H} = \left(\frac{1}{1+a}, \frac{a}{1+a}\right)$. For any other $\delta' \in \{0,1\}^{N_{(0,0)}}$, $\Lambda^{\delta'}_{\Phi,\H} = (1,0)$, i.e. it is concentrated entirely on the $0$ symbol. (This is because if $\delta' \neq 0^{N_{(0,0)}}$, the only locally admissible way to fill $\delta$ in $\mathcal{H}$ is with a $0$.) Therefore,
\[
q(\Lambda_{\Phi,\H}) = d\left(\left(\frac{1}{1+a},\frac{a}{1+a}\right),(1,0)\right) = \frac{a}{1+a},
\]
which is less than $p_c$ iff $a < \frac{p_c}{1-p_c}$. We note that this computation was already done in \cite{vdBM}.

\medskip

2. Ising antiferromagnet: again, we wish to know which parameters $h, \beta$ give $q(\Lambda_{\Phi,X}) < p_c$. Again, it is reasonably straightfoward to check that for any $\delta \in \{\pm 1\}^{N_{(0,0)}}$ with $\sum_{v \in N_{(0,0)}} \delta(v) = n$,
\[
\Lambda^{\delta}_{\Phi,X}(b) =
\begin{cases}
\frac{e^{-\beta(h-n)}}{e^{\beta(h-n)} + e^{-\beta(h-n)}} & \textrm{if } b = -1\\
\frac{e^{\beta(h-n)}}{e^{\beta(h-n)} + e^{-\beta(h-n)}} & \textrm{if } b = 1.
\end{cases}
\]
It is then fairly easy to see that
\begin{multline*}
q(\Lambda_{\Phi,X}) = d((\Lambda_{\Phi,X})^{(1^{N_{(0,0)}})}, (\Lambda_{\Phi,X})^{(-1^{N_{(0,0)}})}) \\= \frac{e^{\beta(h-4)}}{e^{\beta(h-4)} + e^{-\beta(h-4)}} - \frac{e^{\beta(h+4)}}{e^{\beta(h+4)} + e^{-\beta(h+4)}}.
\end{multline*}
It was shown in \cite{vdBM} that $q(\Lambda_{\Phi,X}) < p_c$ as long as $2\beta(4-|h|) < \log \left(\frac{p_c}{1-p_c}\right)$, though this condition is certainly not necessary.

\medskip

In both models, the results of \cite{vdBM} imply that there is a unique Gibbs measure $\mu$ for the described parameters. Theorem~\ref{main_Gibbs2} implies that in addition, the pressure $P(f_{\Phi})$ is exponentially well approximable by
\[
1/p\left(\log \lambda(A_{\Phi^{[p]},X^{[p]},1,n+1,t^{[p]},b^{[p]}}) - \log \lambda(A_{\Phi^{[p]},X^{[p]},1,n,t^{[p]},b^{[p]}})\right)
\]
for any boundary conditions $t,b$ periodic with period $p$. (Of course, for these two models, there exist globally admissible constant rows, and so we could take $p=1$.) By Theorem~\ref{pressurecomputability}, a bound on the computability of these pressures can also be given in terms of the computability of the relevant parameter values.

\medskip
We conclude by giving applications to topological entropy, in the same spirit as \cite{Pa}. 
We first note that since a measure of maximal entropy is clearly an equilibrium state for the function $f=f_0=0$, any measure of maximal entropy is a $\mathbb{Z}^2$-MRF associated to the Gibbs specification $\Lambda = \Lambda_{0,X}$ defined by
\[
\Lambda^{\delta}(\eta) :=
\begin{cases}
0 & \textrm{ if } \quad \eta \delta \notin \L(X)\\
\frac{1}{N(\delta)} & \textrm{ if } \quad \eta \delta \in \L(X)
\end{cases}
\]
when $\delta \in \L(X)$ (here $N(\delta)$ is just the normalization factor $|\{\eta \in \A^S \ : \ \eta \delta \in \L(X)\}|$), and by $\Lambda^{\delta'} = \Lambda^{\delta_0}$ when $\delta' \notin \L(X)$, where $\delta_0$ is any fixed globally admissible boundary with the same shape as $\delta'$.

We now exhibit two classes of strongly irreducible n.n. $\zz^2$-SFTs for which $q(\Lambda) < p_c$ for this specification, implying that the topological entropy (topological pressure for $f = f_0 = 0$) is exponentially well approximable via strips, and therefore $\{J^n\}$-computable for some $J$. For the first, we need a definition.

\begin{definition}
In a n.n. $\zz^d$-SFT $X$ with alphabet $\A$, $a \in \A$ is a {\rm safe symbol} if for all $b \in \A$, $(a,b), (b,a) \in \mathcal{E}_i$ for $1 \leq i \leq d$. In other words, $a$ is a safe symbol if it may legally appear next to any letter of the alphabet in any direction.
\end{definition}

\begin{proposition}\label{manysafe} Any n.n. $\mathbb{Z}^2$-SFT with alphabet $\A$ containing a subset $\A'$ of safe symbols for which $\frac{|\A'|}{|\A|} > 1 - p_c$ has a unique measure of maximal entropy $\mu$ which is an MRF associated to a translation-invariant $\mathbb{Z}^2$-specification $\Lambda$ satisfying $q(\Lambda) < p_c$. The topological entropy of any such SFT is $\{J^n\}$-computable for some $J$.
\end{proposition}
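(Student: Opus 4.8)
The plan is to derive the proposition from Theorem~\ref{pressurecomputability} applied to the zero interaction $\Phi\equiv 0$, for which $f_\Phi\equiv 0$ and $P(f_\Phi)=h(X)$. Two hypotheses must be checked: that $X$ is strongly irreducible, and that $q(\Lambda)<p_c$ for the Gibbs specification $\Lambda=\Lambda_{0,X}$ (which, as noted just above, is the uniform specification to which every measure of maximal entropy is associated). Uniqueness of the measure of maximal entropy then follows immediately from Theorem~\ref{uniqueness}, and the computability statement from Theorem~\ref{pressurecomputability}.

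First I would show that $X$ is strongly irreducible with filling distance $1$. The hypothesis forces $|\A'|\ge 1$; fix a safe symbol $a\in\A'$. Given finite $S,T\subset\zz^2$ with $d(S,T)>1$, $x\in\L_S(X)$ and $y\in\L_T(X)$, build a configuration on $\zz^2$ by placing $x$ on $S$, $y$ on $T$, and $a$ on every other site. Each edge of $\zz^2$ either lies inside $S$ (legal, since $x$ is globally admissible), inside $T$ (legal), or has an endpoint carrying $a$ (legal, since $a$ is safe); no edge joins a site of $S$ to a site of $T$, since $d(S,T)>1$. Hence this configuration lies in $X$, so $xy\in\L_{S\cup T}(X)$.

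The substantive step, and the only real computation, is the bound on $q(\Lambda)$. For a single site $g$ and any $\delta\in\A^{N_g}$, $\Lambda^\delta$ is the uniform distribution on the set $S_\delta\subseteq\A$ of symbols that may legally occupy $g$ given $\delta$ (for $\delta\notin\L(X)$ one uses the convention that $\Lambda^\delta$ equals one such genuine distribution). Since $a$ is safe, $\A'\subseteq S_\delta$ for every $\delta$, and $|S_\delta|\le|\A|$, so $\Lambda^\delta(b)\ge 1/|\A|$ for each $b\in\A'$. Using the identity $d(\mu,\nu)=1-\sum_b\min(\mu(b),\nu(b))$, for any $\delta,\delta'\in\A^{N_g}$ we obtain
\[
d(\Lambda^\delta,\Lambda^{\delta'})\le 1-\sum_{b\in\A'}\min\big(\Lambda^\delta(b),\Lambda^{\delta'}(b)\big)\le 1-\frac{|\A'|}{|\A|}.
\]
Taking the supremum over $g$ and over $\delta,\delta'$ gives $q(\Lambda)\le 1-|\A'|/|\A|<p_c$ by hypothesis.

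Finally, a measure of maximal entropy for $X$ exists and is a $\zz^2$-MRF associated to $\Lambda$, so $\Lambda$ is valid; by Theorem~\ref{uniqueness} there is a unique associated MRF, hence a unique measure of maximal entropy. Strong irreducibility supplies a globally admissible periodic row via~\cite{ward}, and $\Phi\equiv 0$ is $\{r_n\}$-computable for a constant sequence $\{r_n\}$, so Theorem~\ref{pressurecomputability} applies and shows $h(X)=P(f_0)$ is $\{J^n+Br_{Cn}\}$-computable for suitable constants; since $Br_{Cn}$ is bounded, $h(X)$ is $\{(J+1)^n\}$-computable. No step is a genuine obstacle here — the whole argument is a clean application of the earlier machinery — but the heart of the matter is the variational-distance estimate in the previous paragraph, which is exactly where the quantitative hypothesis $|\A'|/|\A|>1-p_c$ is used.
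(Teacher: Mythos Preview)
Your proof is correct and follows essentially the same route as the paper: verify strong irreducibility from the existence of a safe symbol, bound $q(\Lambda_{0,X})$ by $1-|\A'|/|\A|$, and then invoke Theorem~\ref{uniqueness} and Theorem~\ref{pressurecomputability}. Your variational-distance step via the identity $d(\mu,\nu)=1-\sum_b\min(\mu(b),\nu(b))$ is in fact a bit cleaner than the paper's direct case computation, but both reach the same bound; the only superfluous remark is the appeal to~\cite{ward}, since the constant row $a^{\zz}$ for any safe $a$ is already a globally admissible (constant) row.
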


\begin{proof}
Consider any such $\mathbb{Z}^2$-SFT $X$ and any $\mu$ a measure of maximal entropy on $X$ with $\mathbb{Z}^2$-specification $\Lambda$. Clearly, since $X$ contains a safe symbol,
any $\delta \in A^{N_{(0,0)}}$ is in $\L(X)$ and
$X$ is strongly irreducible. For any $\delta \in \A^{N_{(0,0)}}$, the probability distribution $\Lambda^{\delta}$ is uniform over some subset of $\A$, call it $S_{\delta}$, which contains $\A'$. Clearly, $|S_{\delta}| = N(\delta)$. Choose any $\delta,\delta' \in \A^{N_{(0,0)}}$, and assume w.l.o.g. that $N(\delta) \geq N(\delta')$. Then,
\[
d\big(\Lambda^{\delta},\Lambda^{\delta'}\big) = \frac{1}{2} \sum_{e \in \A} |\Lambda^{\delta}(e) - \Lambda^{\delta'}(e)| \leq
\]
\[
\frac{|\A'|}{2} \Big(\frac{1}{N(\delta')} - \frac{1}{N(\delta)}\Big) + \frac{N(\delta) - |\A'|}{2N(\delta)} + \frac{N(\delta') - |\A'|}{2N(\delta')}
\]
\[
= 1 - \frac{|\A'|}{N(\delta)} \leq 1 - \frac{|\A'|}{|\A|} < p_c.
\]

Therefore, $q(\Lambda) < p_c$, implying by Theorem~\ref{uniqueness} that $\mu$ was unique. The fact that $h(X) = P(f_0)$ implies that $h(X)$ is $\{J^n\}$ computable for some $J$ by Theorem~\ref{pressurecomputability}.

\end{proof}

We note that since $p_c > .556 > .5$ by \cite{vdBE}, clearly the $\mathbb{Z}^2$ hard square shift $\mathcal{H}$ satisfies the conditions of Proposition~\ref{manysafe}.

We recall that the $\{J^n\}$-computability of the topological entropy of such SFTs followed from the exponentially good approximations given by differences of topological entropies of constrained strips, i.e. strips with boundary conditions $t,b$. For SFTs with at least one safe symbol $a$ (such as those to which Proposition~\ref{manysafe} applies), one can take $t,b = a^{\zz}$ and then the topological entropies of the approximating ``constrained'' strips are equal to those of unconstrained strips as were treated in \cite{Pa}.

\begin{proposition}\label{manyadj}
Any n.n. $\mathbb{Z}^2$-SFT with alphabet $\A$ with the property that for all $a \in \A$, and for any direction, the set of legal neighbors of $a$ in that direction has cardinality greater than $(1 -  \frac{p_c}{4(1+p_c)})|\A|$, has a unique measure of maximal entropy $\mu$ which is an MRF associated to a $\mathbb{Z}^2$-specification $\Lambda$ satisfying $q(\Lambda) < p_c$. The topological entropy of any such SFT is $\{J^n\}$-computable for some $J$.
\end{proposition}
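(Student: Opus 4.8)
The plan is to follow the template of the proof of Proposition~\ref{manysafe}, replacing its ``many safe symbols'' input by a union bound. I read ``direction'' as ranging over all four vectors $\pm e_1,\pm e_2$, so the hypothesis asserts that for every $a\in\A$ and each of these four directions the set of letters that may legally sit next to $a$ in that direction has more than $(1-\epsilon)|\A|$ elements, where $\epsilon=\tfrac{p_c}{4(1+p_c)}$. The first step is an elementary count. Fix a vertex $v$; since $N_v$ spans no edge of $\mathbb Z^2$, every $\delta\in\A^{N_v}$ is locally admissible on $N_v$, and the set $S_\delta$ of letters $\eta\in\A$ for which $\eta\delta$ is locally admissible on $\{v\}\cup N_v$ is the intersection of four legal-neighbour sets, one for each entry of $\delta$. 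Hence $|\A\setminus S_\delta|<4\epsilon|\A|=\tfrac{p_c}{1+p_c}|\A|$, so $|S_\delta|>\tfrac{1}{1+p_c}|\A|>0$; in particular every $\delta$ is ``single-site fillable''.

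Next I would upgrade single-site fillability to structural facts about $X$. Given any finite locally admissible configuration, place it inside a box $[-n,n]^2$ and fill, one site at a time and in any order, first the remaining sites of $[-n,n]^2$ and then the successive frames $[-m-1,m+1]^2\setminus[-m,m]^2$ for $m\ge n$; at each step the set of legal values at the site being filled contains $S_\delta$ for a suitable $\delta\in\A^{N_v}$ and is therefore nonempty. The resulting configuration lies in $X$ because it is locally admissible on every box. This shows $X\ne\varnothing$, that $\L(X)$ is precisely the set of finite locally admissible configurations, and --- taking filling distance $1$, since two sets at distance greater than $1$ span no connecting edge --- that $X$ is strongly irreducible.

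By the discussion preceding Proposition~\ref{manysafe}, any measure of maximal entropy $\mu$ on $X$ is then a $\mathbb Z^2$-MRF associated to the translation-invariant Gibbs specification $\Lambda=\Lambda_{0,X}$, and for each $\delta\in\A^{N_v}=\L_{N_v}(X)$ the distribution $\Lambda^\delta$ is uniform on $S_\delta$. To estimate $q(\Lambda)$, take $\delta,\delta'\in\A^{N_v}$, without loss of generality with $|S_{\delta'}|\ge|S_\delta|$; using $|S_\delta\cap S_{\delta'}|\ge|S_\delta|+|S_{\delta'}|-|\A|$ one gets $d(\Lambda^\delta,\Lambda^{\delta'})=1-\tfrac{|S_\delta\cap S_{\delta'}|}{|S_{\delta'}|}\le\tfrac{|\A|-|S_\delta|}{|S_{\delta'}|}<\tfrac{p_c/(1+p_c)}{1/(1+p_c)}=p_c$, and by translation-invariance $q(\Lambda)<p_c$. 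Theorem~\ref{uniqueness} now gives uniqueness of $\mu$. Finally, since $X$ is strongly irreducible, $q(\Lambda_{0,X})<p_c$, and the zero interaction is trivially $\{1\}$-computable, Theorem~\ref{pressurecomputability} applied with $\Phi=0$ shows $h(X)=P(f_0)$ is $\{J^n+B\}$-computable, hence $\{(J+1)^n\}$-computable, for suitable constants $J,B$.

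The main obstacle I anticipate is the second paragraph: verifying rigorously that single-site fillability really does let one extend an arbitrary finite locally admissible pattern to an element of $X$, and that strong irreducibility follows. The constant-chasing in the $q(\Lambda)<p_c$ bound --- where the factor $4$ built into the hypothesis is exactly what the four-direction union bound consumes, and where one needs the formula for variational distance between two uniform distributions --- is delicate but routine.
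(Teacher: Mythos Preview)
Your proposal is correct and follows essentially the same approach as the paper: the same four-direction union bound to get $|S_\delta|>(1-\alpha)|\A|$ with $\alpha=\tfrac{p_c}{1+p_c}$, the same formula $d(\Lambda^\delta,\Lambda^{\delta'})=1-|S_\delta\cap S_{\delta'}|/\max(|S_\delta|,|S_{\delta'}|)$ for the variational distance between uniform distributions, and the same inclusion-exclusion lower bound on the intersection to conclude $q(\Lambda)<p_c$. You supply more detail than the paper on why single-site fillability yields strong irreducibility and $\L(X)=\{\text{locally admissible configurations}\}$, which the paper simply asserts and leaves to the reader.
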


\begin{proof}
Consider any such n.n. $\mathbb{Z}^2$-SFT $X$ and any $\mu$ a measure of maximal entropy on $X$ with $\mathbb{Z}^2$-specification $\Lambda$. Since $1 - \frac{p_c}{4(1+p_c)} > \frac{3}{4}$, any $\delta \in \A^{N_{(0,0)}}$ can be extended to a locally admissible configuration with shape $N_{(0,0)} \cup \{0\}$. The reader may check that this implies both that any $\delta \in A^{N_{(0,0)}}$ is in $\L(X)$ and that $X$ is strongly irreducible. Therefore, for any $\delta \in A^{N_{(0,0)}}$, the probability distribution $\Lambda^{\delta}$ is uniform over some nonempty subset of $\A$, call it $S_{\delta}$. Clearly $|S_{\delta}| = N(\delta)$. Define $\alpha := \frac{p_c}{(1+p_c)}$. Then it is clear that $|S_{\delta}| > (1-\alpha)|\A|$
for any $\delta \in \A$. Choose any $\delta,\delta' \in \A^{N_{(0,0)}}$, and assume w.l.o.g. that $N(\delta) \geq N(\delta')$. Define $m := |S_{\delta} \cap S_{\delta'}|$ and note that $m > N(\delta) - \alpha |\A|$. Then,
\begin{multline*}
d\big(\Lambda^{\delta},\Lambda^{\delta'}\big) = \frac{1}{2} \sum_{e \in \A} |\Lambda^{\delta}(e) - \Lambda^{\delta'}(e)| \\
\leq \frac{m}{2} \Big(\frac{1}{N(\delta')} - \frac{1}{N(\delta)}\Big) + \frac{N(\delta) - m}{2N(\delta)} + \frac{N(\delta') - m}{2N(\delta')} = 1 - \frac{m}{N(\delta)}\\
< 1 - \frac{N(\delta) - \alpha |\A|}{N(\delta)} = \frac{\alpha|\A|}{N(\delta)} < \frac{\alpha}{1-\alpha} = p_c.
\end{multline*}

Therefore, $q(\Lambda) < p_c$, implying by Theorem~\ref{uniqueness} that $\mu$ was unique. The fact that $h(X) = P(f_0)$ again implies that $h(X)$ is $\{J^n\}$-computable for some $J$ by Theorem~\ref{pressurecomputability}.

\end{proof}

We note that since by \cite{vdBE}, $\frac{p_c}{4(1+p_c)} >
\frac{.556}{4(1+.556)} > \frac{5}{56}$, the usual
$k$-\textit{checkerboard} $\mathbb{Z}^2$-SFT, with alphabet
$\{1,\ldots,k\}$ and forbidden list consisting of all pairs of
adjacent identical letters, satisfies the conditions of
Proposition~\ref{manyadj} when $k \geq 12$. (It may also satisfy
these conditions for smaller $k$ depending on the exact value of
$p_c$.)
\section*{Acknowledgments}\label{acks}
The authors thank David Brydges, Guangyue Han, Erez Louidor,
Christian Maes, and Benjy Weiss for helpful discussions.

\end{document}